\newtheorem{thm}{Theorem}
\newtheorem{defn}{Definition}
\newtheorem{lem}{Lemma}
\newtheorem{prop}{Proposition}
\newtheorem{cor}{Corollary}
\newtheorem{exm}{Example}
\newtheorem{rem}{Remark}
\newtheorem{conj}{Conjecture}
\newcommand{\ga}{\gamma}
\newcommand{\al}{\alpha}
\newcommand{\be}{\beta}
\newcommand{\de}{\delta}
\newcommand{\la}{\lambda}
\newcommand{\eps}{\varepsilon}
\newcommand{\CC}{\mathbb{C}}
\newcommand{\wit}{\widetilde}
\renewcommand{\leq}{\leqslant}
\renewcommand{\geq}{\geqslant}
\renewcommand{\phi}{\varphi}
\newcommand{\sm}{\setminus}
\newcommand{\da}{\!\downarrow\!} \newcommand{\ua}{\!\uparrow\!}
\newcommand{\GG}{\mathcal{G}}
\newcommand{\hex}{\includegraphics[scale=1]{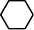}}
\newcommand{\wh}{\widehat}
\DeclareMathOperator{\wt}{wt}
\newlength{\cellsz}
\newcounter{cellsize}
\newcommand{\setcellsize}[1]{%
  \setcounter{cellsize}{#1}%
  \setlength{\cellsz}{\value{cellsize}\unitlength}}%
\newcommand\cellify[1]{\def\thearg{#1}\def\nothing{}%
\hbox to 0pt{{\begin{picture}(\value{cellsize},\value{cellsize})
  \put(0,0){\line(1,0){\value{cellsize}}}
  \put(0,0){\line(0,1){\value{cellsize}}}
  \put(\value{cellsize},0){\line(0,1){\value{cellsize}}}
  \put(0,\value{cellsize}){\line(1,0){\value{cellsize}}} \end{picture}
\HanStaSte92}}
\vbox to \cellsz{ \vss \hbox to \cellsz{\HanStaSte92$#1$\HanStaSte92} \vss}}
\newcommand\tableau[1]{\vcenter{\vbox{\let\\\cr
\baselineskip -16000pt \lineskiplimit 16000pt \lineskip 0pt
\ialign{&\cellify{##}\cr#1\crcr}}}}
\newcommand\tabl[1]{\vtop{\let\\\cr
\baselineskip -16000pt \lineskiplimit 16000pt \lineskip 0pt
\ialign{&\cellify{##}\cr#1\crcr}}}
\title{On the matchings-Jack and hypermap-Jack conjectures for labelled matchings and star hypermaps}
\author{Andrei L. Kanunnikov\\
\small Department of Higher Algebra\\[-0.8ex]
\small Moscow State University\\[-0.8ex] 
\small Moscow, Russia.\\
\small\tt andrew.kanunnikov@gmail.com\\
\and
Valentin V. Promyslov\\
\small Department of Higher Algebra\\[-0.8ex]
\small Moscow State University\\[-0.8ex] 
\small Moscow, Russia.\\
\small\tt andrew.kanunnikov@gmail.com\\
\and
Ekaterina A. Vassilieva\\
\small LIX\\[-0.8ex]
\small Ecole Polytechnique\\[-0.8ex]
\small Palaiseau, France.\\
\small\tt ekaterina.vassilieva@lix.polytechnique.fr
}
\begin{document}
\maketitle
\begin{abstract}
Introduced by Goulden and Jackson in their 1996 paper, the matchings-Jack conjecture and the hypermap-Jack conjecture (also known as the $b$-conjecture) are two major open questions relating Jack symmetric functions, the representation theory of the symmetric groups and combinatorial maps.  
They show that the coefficients in the power sum expansion of some Cauchy sum for Jack symmetric functions and in the logarithm of the same sum interpolate respectively between the structure constants of the class algebra and the double coset algebra of the symmetric group and between the numbers of orientable and locally orientable hypermaps. They further provide some evidence that these two families of coefficients indexed by three partitions of a given integer $n$ and the Jack parameter $\alpha$ are polynomials in $\beta = \alpha-1$ with non negative integer coefficients of combinatorial significance. This paper is devoted to the case when one of the three partitions is equal to $(n)$. We exhibit some polynomial properties of both families of coefficients and prove a variation of the hypermap-Jack conjecture and the matchings-Jack conjecture involving labelled hypermaps and matchings in some important cases. 
\end{abstract}

\section{Introduction}\label{sec:in}

\subsection{Cauchy sums for Jack symmetric functions} 
For any integer $n$ denote $\la=(\la_1,\la_2,\cdots,\la_p) \vdash n$ an integer partition of $|\la| = n $ with $\ell(\la)=p$ parts sorted in decreasing order. The set of all integer partitions (including the empty one) is denoted $\mathcal{P}$. If $m_i(\la)$ is the number of parts of $\la$ that are equal to $i$, then we may write $\la$ as $[1^{m_1(\la)}\,2^{m_2(\la)}\cdots]$ and define $z_\lambda =\prod_i i^{m_i(\lambda)}m_i(\lambda)!$, $Aut_\la = \prod_{i}m_i(\lambda)!$. When there is no ambiguity, the one part partition of integer $n$, $(n) = [n^1]$ is simply denoted $n$. Given a parameter $\alpha$, denote $p_\lambda(x)$ and  $J^\alpha_\la(x)$  the power sum and {\bf Jack symmetric function} indexed by $\la$ on $x=(x_1,x_2,\cdots)$. Jack symmetric functions are  orthogonal for the scalar product $\langle \cdot\, ,\cdot\rangle_\alpha$ defined by $\langle p_\la,p_\mu \rangle_\alpha = \alpha^{\ell(\la)}z_\la\delta_{\la,\mu}$. Denote $j_\la(\al)$ the value of the scalar product $\langle J^\al_\la,J^\al_\mu \rangle_\alpha = j_\la(\al)\delta_{\la,\mu}.$
This paper is devoted to the study of the following series for Jack symmetric functions introduced by Goulden and Jackson in \cite{GouJac96}.
\begin{align*}
\Phi(x,y,z,t,\al) &=  \sum_{\gamma \in \mathcal{P}}\frac{J^{\al}_\gamma(x)J^{\al}_\gamma(y)J^{\al}_\gamma(z)t^{|\ga|}}{\langle J^{\al}_\gamma,J^{\al}_\gamma\rangle_{\al}},\\
\Psi(x,y,z,t,\al) &= \al t\frac{\partial}{\partial t} \log\Phi(x,y,z,t,\al).
\end{align*}
More specifically, we focus on the coefficients $a_{\mu,\nu}^\la(\al)$ and $h_{\mu,\nu}^\la(\al)$ in their power sum expansions defined by:
\begin{align*}
 \Phi(x,y,z,t,\al) &=\sum_{n \geq 0}t^n\sum_{\la,\mu,\nu \vdash n}\al^{-\ell(\la)}z_\la^{-1}a_{\mu,\nu}^\la(\al)p_\la(x)p_\mu(y)p_\nu(z),\\
 \Psi(x,y,z,t,\al) &= \sum_{n \geq 0}t^n\sum_{\la,\mu,\nu \vdash  n}h_{\mu,\nu}^\la(\al)p_\la(x)p_\mu(y)p_\nu(z).
\end{align*}
Goulden and Jackson conjecture that both the $a_{\mu,\nu}^\la(\al)$ and $h_{\mu,\nu}^\la(\al)$ may have a strong combinatorial interpretation. In particular thanks to exhaustive computations of the coefficients they show that the $a^{\la}_{\mu,\nu}(\alpha)$ and $h_{\mu,\nu}^\la(\al)$ are polynomials in $\beta = \al -1$ with non negative integer coefficients and of degree at most $n-\min\{\ell(\mu),\ell(\nu)\}$  for all $\la,\mu,\nu\vdash n \leq 8$ . They conjecture this property for arbitrary $\la,\mu,\nu$ and prove it  in the limit cases $\la =[1^n]$ and $\la = [1^{n-2}2^1]$. Moreover, for $\la,\mu,\nu$ partitions of a given integer $n$, they make the stronger suggestion that the coefficients in the powers of $\beta$ in $a^{\la}_{\mu,\nu}(\alpha)$ count certain sets of {\em matchings} i.e. fixpoint-free involutions of the symmetric group on $2n$ elements (the {\em matchings-Jack conjecture}) and that the coefficients in the powers of $\beta$ in $h_{\mu,\nu}^\la(\al)$ count certain sets of {\em locally orientable hypermaps} i.e.\ connected bipartite graphs embedded in a locally orientable surface (the {\em hypermap-Jack conjecture} or {\em $b$-conjecture}). We look at the case $\mu = (n)$ and study a variant of the conjectures involving labelled objects defined in the following section.

In this specific case the two conjectures are related. Indeed, one has:

\begin{align*}
\Psi(x,y,z,t,\al) = \al t\frac{\partial}{\partial t}&\sum_{k\geq 0}(-1)^{k+1} \sum_{\mu^1\cdots\mu^k \in \mathcal{P\setminus{\emptyset}}}\\
&\prod_{i}{p_{\mu^i}(y)}t^{|\mu_i|}\sum_{\la,\nu \vdash |\mu^i|}z_\la^{-1}\al^{-\ell(\la)}a^{\la}_{\mu^i\nu}(\al)p_\la(x)p_\nu(z),
\end{align*}

\noindent which implies that

\begin{align*}
[p_{n}(y)]\Psi(x,y,z,t,\al) = \al t\frac{\partial}{\partial t}t^{n}\sum_{\la,\nu \vdash n}z_\la^{-1}\al^{-\ell(\la)}a^{\la}_{n\nu}(\al)p_\la(x)p_\nu(z).
\end{align*}

\noindent As a result, the following formula holds:

\begin{align}
\label{eq : ha}
h^{\la}_{n \nu}(\al) = \al n z_\la^{-1}\al^{-\ell(\la)}a^{\la}_{n\nu}(\al).
\end{align}

However, because of the difference in the combinatorial objects involved in the two conjectures, they are not equivalent.

\begin{rem}
According to the definition of the coefficients $a^{\la}_{\mu,\nu}(\al)$, Equation~(\ref{eq : ha}) can be rewritten as $h^{\la}_{n \nu}(\al) = a^{n}_{\la\nu}(\al)$.
\end{rem}

\subsection{Combinatorial background}
\subsubsection{Matchings}

Given a non-negative integer $n$ and a set of $2n$ vertices $V_n= \{1,\wh{1},\cdots, n,\wh{n}\}$ we call a {\bf matching} on $V_n$ a set of $n$ non-adjacent edges such that all the vertices are the endpoint of one edge. Given two matchings $\de_1$ and $\de_2$, the graph induced by the vertices in $V_n$ and the $2n$ edges of $\de_1 \cup \de_2$ is composed of cycles of even length $2\eps_1, 2\eps_2,\cdots,2\eps_p$ for some $\eps \vdash n$ and we denote $\Lambda(\de_1,\de_2) = \eps$. For a partition $\la=(\la_1,\cdots,\la_p)$ of $n$, define two canonical matchings ${\bf g}_n$ and ${\bf b}_\la$. The matching ${\bf g}_n$ is obtained by drawing a gray colored edge between vertices $i$ and $\wh{i} = {\bf g}_n(i)$ for $i=1,\cdots,n$. The matching ${\bf b}_\la$ is obtained by drawing a black colored edge between vertices $\wh{i}$ and ${\bf b}_\la(\wh{i})$ for $i=1,\cdots,n$ where ${\bf b}_\la(\wh{i}) = 1+\sum_{k=1}^{l-1} \la_k$ if $i = \sum_{k=1}^l \la_k$ for some $1\leq l \leq p$ and ${\bf b}_\la(\wh{i}) = i+1$ otherwise. Obviously $\Lambda({\bf g}_n,{\bf b}_\la) = \la$. Denote by $\mathcal{G}^\la_{\mu,\nu}$ the set of all the matchings $\delta$ on $V_n$ such that $\Lambda({\bf g}_n,\de) = \mu$ and $\Lambda({\bf b}_\la,\de) = \nu$. A matching $\delta$ in which all edges are of kind $\{i,\widehat{j}\}$ is called {\bf bipartite}.
This graph model is closely linked to the connection coefficients of two classical algebras.
\begin{itemize}
\item The {\bf class algebra} is the center of the group algebra $\CC S_n$. For $\la\vdash n$, denote by $C_\la$ the formal sum 
of all permutations with cycle type $\la$. The set $\{C_\la \mid \la\vdash n\}$ is a basis of the class algebra.

\item The {\bf double coset algebra} is the Hecke algebra of the Gelfand pair $(S_{2n},B_n)$ where $B_n$ is the centralizer 
of  $f_\star = (1\widehat{1})(2\widehat{2})\ldots(n\widehat{n})$ in $S_{2n}$. For $\la\vdash n$, denote by $K_\lambda$ the double coset consisting of all the permutations $\omega\in S_{2n}$ such that 
$f_\star \circ\omega\circ f_\star\circ\omega^{-1}$ has a cycle type $\la\la$.
The set $\{K_\la \mid \la\vdash n\}$ is a basis of the double coset algebra.
\end{itemize}
We define the  connection coefficients of these algebras by
\begin{equation}
\label{eq : defconcoef} c^{\la}_{\mu\nu} = [C_{\la}]C_{\mu}C_{\nu} \;\; \text{ and } \;\; b^{\la}_{\mu\nu} = [K_{\la}]K_{\mu}K_{\nu}, \qquad \la,\mu,\nu\vdash n.
\end{equation}

\begin{prop}[\cite{GouJac96}, proposition 4.1; \,\cite{HanStaSte92}, Lemma 3.2] \label{combint} 
Using the notation above:
$$b^\la_{\mu\nu}/|B_n|=|\GG^\la_{\mu\nu}| \text{ and } 
c^\la_{\mu\nu}=|\{\de\in \GG^\la_{\mu\nu} \mid \de \text{ is bipartite}\}|.$$
\end{prop}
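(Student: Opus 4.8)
The plan is to prove both identities by realizing matchings as a transversal for a group action and translating counting in the double coset algebra into counting of two-coloured cycle structures. First I would fix the base point $f_\star=(1\widehat1)(2\widehat2)\cdots(n\widehat n)$ and recall the standard bijection between matchings on $V_n$ and the coset space $S_{2n}/B_n$: a permutation $\omega\in S_{2n}$ is sent to the matching $\omega\cdot{\bf g}_n$ obtained by applying $\omega$ to the endpoints of the gray edges, and two permutations give the same matching iff they lie in the same left coset of $B_n=\mathrm{Stab}({\bf g}_n)$. Under this dictionary, the superposition $\delta_1\cup\delta_2$ of two matchings has cycle-type data $\Lambda(\delta_1,\delta_2)=\varepsilon$ precisely when the product of the corresponding pair of fixpoint-free involutions has cycle type $\varepsilon\varepsilon$ (each cycle of the superimposed graph is traced twice, once in each colour, when read as a permutation). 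In particular $\mathbf g_n,\mathbf b_\lambda$ correspond to involutions whose product lies in $K_\lambda$, and $\mathcal G^\lambda_{\mu\nu}$ is the set of $\delta$ with $\Lambda(\mathbf g_n,\delta)=\mu$, $\Lambda(\mathbf b_\lambda,\delta)=\nu$.

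Next I would unwind the definition $b^\lambda_{\mu\nu}=[K_\lambda]K_\mu K_\nu$ as a count of pairs $(\omega_1,\omega_2)\in K_\mu\times K_\nu$ with $\omega_1\omega_2$ a fixed element of $K_\lambda$, and use the $B_n$-invariance of the double cosets to average: since each $K_\rho$ is a union of $|B_n|$-many left cosets weighted appropriately, dividing $b^\lambda_{\mu\nu}$ by $|B_n|$ turns the count of permutation pairs into a count of pairs of matchings $(\delta_1,\delta_2)$ with $\Lambda(\mathbf g_n,\delta_1)=$ prescribed, $\Lambda(\delta_1,\delta_2)=\mu$, $\Lambda(\delta_2,\mathbf b_\lambda)=\nu$; by the transitivity of $S_{2n}$ on matchings with a given superposition type, one of the two matchings can be normalized to $\mathbf g_n$ (resp. pinned so that its superposition with $\mathbf g_n$ has type $(1^n)$ is not needed — rather, the homogeneity of the action lets us fix $\delta_1=\mathbf g_n$ outright), leaving exactly $|\mathcal G^\lambda_{\mu\nu}|$ choices for the remaining matching. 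This yields $b^\lambda_{\mu\nu}/|B_n|=|\mathcal G^\lambda_{\mu\nu}|$.

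For the second identity I would restrict the whole correspondence to the bipartite stratum. A matching $\delta$ is bipartite exactly when the associated coset representative can be chosen inside $S_n\times S_n$ (it never mixes hatted and unhatted vertices in the "wrong" way); equivalently, the pair of involutions, when multiplied, descends to an honest permutation of $\{1,\dots,n\}$ whose cycle type records $\Lambda$. Concretely, a bipartite $\delta$ is the graph of a bijection $\pi\colon\{1,\dots,n\}\to\{1,\dots,n\}$ via $\{i,\widehat{\pi(i)}\}$, and one checks $\Lambda(\mathbf g_n,\delta)$ is the cycle type of $\pi$ while $\Lambda(\mathbf b_\lambda,\delta)$ is the cycle type of $\pi\sigma_\lambda$ for the fixed $\sigma_\lambda\in C_\lambda$ coming from $\mathbf b_\lambda$. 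Counting bipartite $\delta\in\mathcal G^\lambda_{\mu\nu}$ is then literally counting $\pi$ of cycle type $\mu$ with $\pi\sigma_\lambda$ of cycle type $\nu$, which is $[C_\lambda]C_\mu C_\nu=c^\lambda_{\mu\nu}$ after replacing $\sigma_\lambda$ by an arbitrary element of $C_\lambda$ and using the class-function symmetry $c^\lambda_{\mu\nu}=c^\lambda_{\nu\mu}$.

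I expect the main obstacle to be the bookkeeping in the averaging step: making precise how the factor $|B_n|$ arises — i.e. verifying that the fibres of the map (pair of permutations in $K_\mu\times K_\nu$ with fixed product) $\to$ (pair of matchings) all have size exactly $|B_n|$, which hinges on $B_n$ acting freely on the relevant configuration after we have pinned down $\delta_1$. Once the group action and the "product of involutions has cycle type $\varepsilon\varepsilon$" lemma are set up cleanly, the bipartite case is a direct specialization. Since this is exactly Proposition~4.1 of \cite{GouJac96} together with Lemma~3.2 of \cite{HanStaSte92}, I would in practice cite those and only sketch the dictionary above for the reader's convenience rather than reprove it in full.
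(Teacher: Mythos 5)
The paper offers no proof of this proposition — it is quoted directly from \cite{GouJac96} (Proposition 4.1) and \cite{HanStaSte92} (Lemma 3.2) — and your sketch reproduces the standard argument of those sources: identify matchings with left cosets of $B_n=\mathrm{Stab}(\mathbf{g}_n)$ in $S_{2n}$, observe that $\Lambda(\delta_1,\delta_2)=\varepsilon$ exactly when the product of the corresponding fixed-point-free involutions has cycle type $\varepsilon\varepsilon$, count pairs with fixed product in $K_\lambda$ up to the free $B_n$-action (whence the factor $|B_n|$), and specialize to bipartite matchings, which are graphs of permutations, to recover $c^\lambda_{\mu\nu}$. Your proposal is correct and takes essentially the same approach as the cited references.
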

This paper is focused on the case $\mu = (n)$. For $\la, \nu \vdash n$ we consider the set of {\bf labelled matchings} $\wit{\GG}^{\la}_{\nu}$, i.e. the tuples $\de = (\bar{\de}, \sigma_2, \cdots)$ composed of a matching $\bar{\delta} \in \GG^{\la}_{n,\nu}$ and a permutation $\sigma_i$ on the $m_i(\nu)$ cycles of length $2i$ in ${\bf b}_\la\cup\bar{\delta}$ for all $i>1$ (the cycles of length $2$ are not labelled). Clearly, 
$|\wit{\GG}^\la_{\nu}|=\frac{Aut_\nu}{m_1(\nu)!}|\GG^\la_{n,\nu}|$. 
\begin{exm} Figure \ref{la_g} depicts a labelled matching from $\wit{\GG}^{(4,2)}_{[2^3]}$ with three labelled squares:  $\wh{1}2\wh{3}4$, $\wh{2}3\wh{6}5$ and $\wh{4}1\wh{5}6$.

\end{exm}
\begin{figure}[htbp]
\begin{center}
 \includegraphics[scale=0.33]{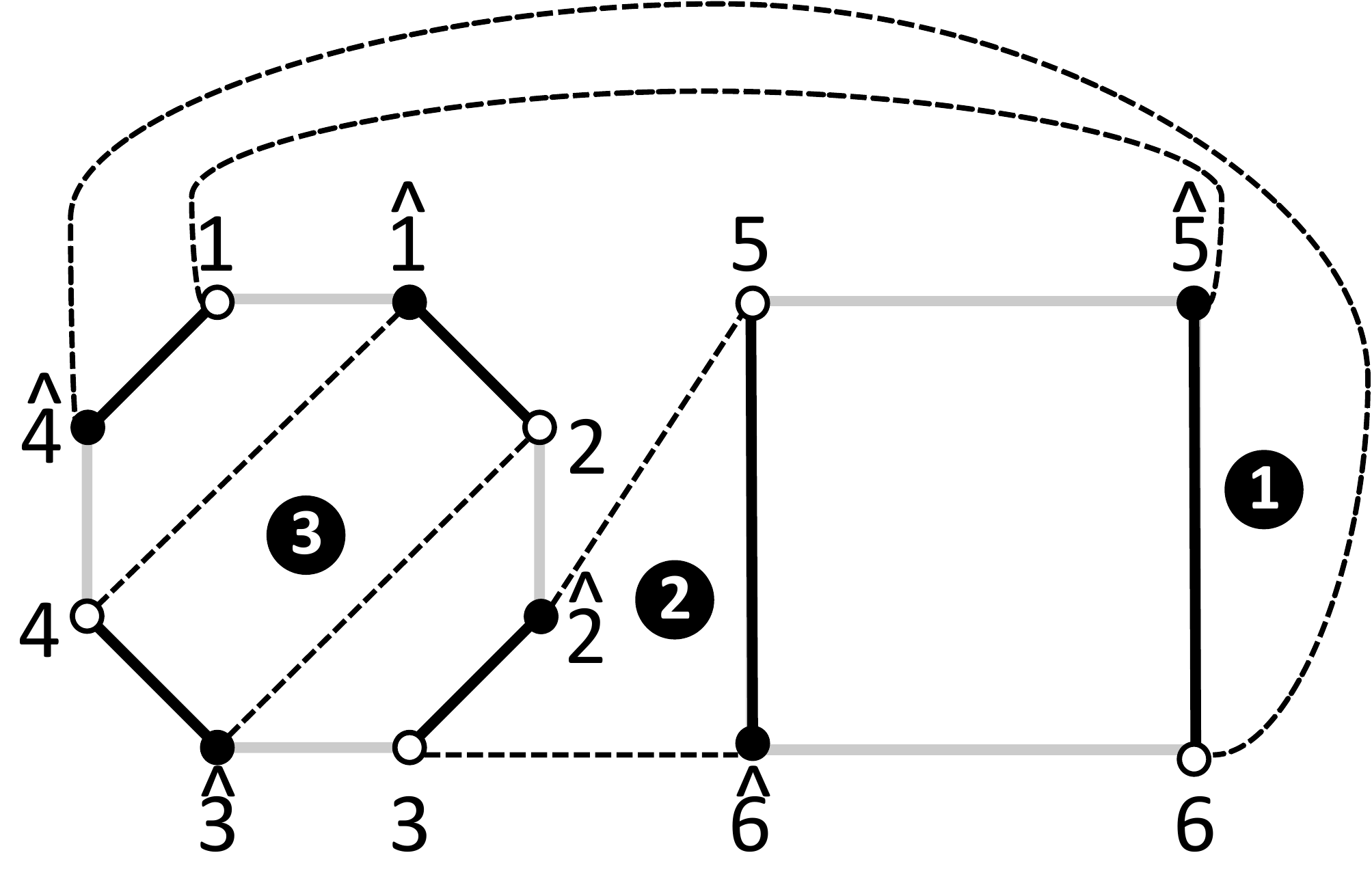}\caption{A labelled matching from $\wit{\GG}^{(4,2)}_{[2^3]}$ with three labelled squares.}
 \label{la_g}
 \end{center}
 \end{figure}


\subsubsection{Locally orientable hypermaps}
One can define {\bf locally orientable hypermaps} as connected bipartite graphs with black and white vertices. Each edge is composed of two half-edges both connecting the two incident vertices. This graph is embedded in a locally orientable surface such that if we cut the graph from the surface, the remaining part consists of connected components called  faces or cells, each homeomorphic to an open disk. The map can also be represented (not in a unique way) as a {\bf ribbon graph} on the plane keeping the incidence order of the edges around each vertex. In such a representation,  two half-edges can be parallel or cross in the middle. We say that the hypermap is {\bf orientable} if it is embedded in an orientable surface (sphere, torus, bretzel, \ldots). Otherwise the hypermap is embedded in a non orientable surface (projective plane, Klein bottle, \dots) and is said to be {\bf non-orientable}. In this paper we consider only {\bf rooted hypermaps}, i.e. hypermaps with a distinguished half-edge. More details about hypermaps can be found in \cite{lanZvo04}.\\
The {\bf degree} of a face, a white vertex or a black vertex is the number of edges incident to it. Hypermaps are also classified according to a triple of integer partitions that give respectively the degree distribution of the faces, the degree distribution of the white vertices, and the degree distribution of the black vertices. For any integer $n$ and partitions $\la, \mu$ and $\nu$ of $n$,  denote $\mathcal{L}^\la_{\mu,\nu}$ and $l^\la_{\mu,\nu}$ (resp. $\mathcal{M}^\la_{\mu,\nu}$ and $m^\la_{\mu,\nu}$) the set and the number of locally orientable hypermaps (resp. orientable) of face degree distribution $\la$, white vertices degree distribution $\mu$ and black vertices degree distribution $\nu$. When $\mu = (n)$, the hypermap has only one white vertex. We call it a {\bf star hypermap}. 

\begin{rem}
Star hypermaps are in natural bijection with {\bf unicellular hypermaps}, i.e. hypermaps with only one face but an arbitrary number of white vertices. While unicellular hypermaps received a more significant attention in previous papers (see e.g. \cite{GouSch98,MorVas13,Vas13,Vas17}), it is much more convenient to work with multicellular star hypermaps for our purpose. 
\end{rem}

\begin{exm}
Two star hypermaps are depicted on Figure~\ref{MapEx}. The leftmost (resp. rightmost) one is orientable (resp. non-orientable)  and has a face degree distribution $\la = (4,1,1)$ (resp. $\la = (4)$).
 \end{exm}
\begin{figure}[htbp]
\begin{center}
 \includegraphics[scale=0.3]{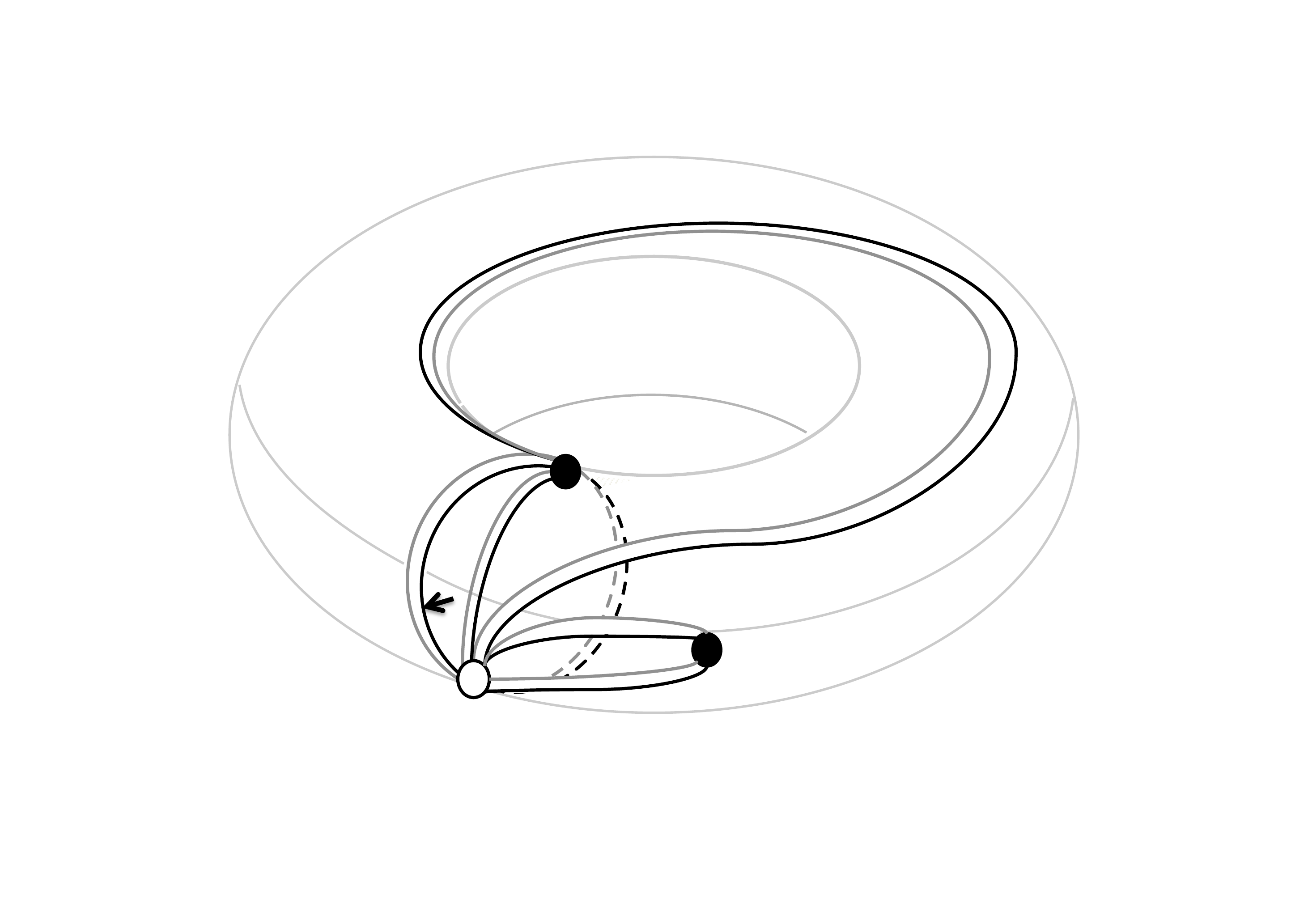}\hfill \includegraphics[scale=0.38]{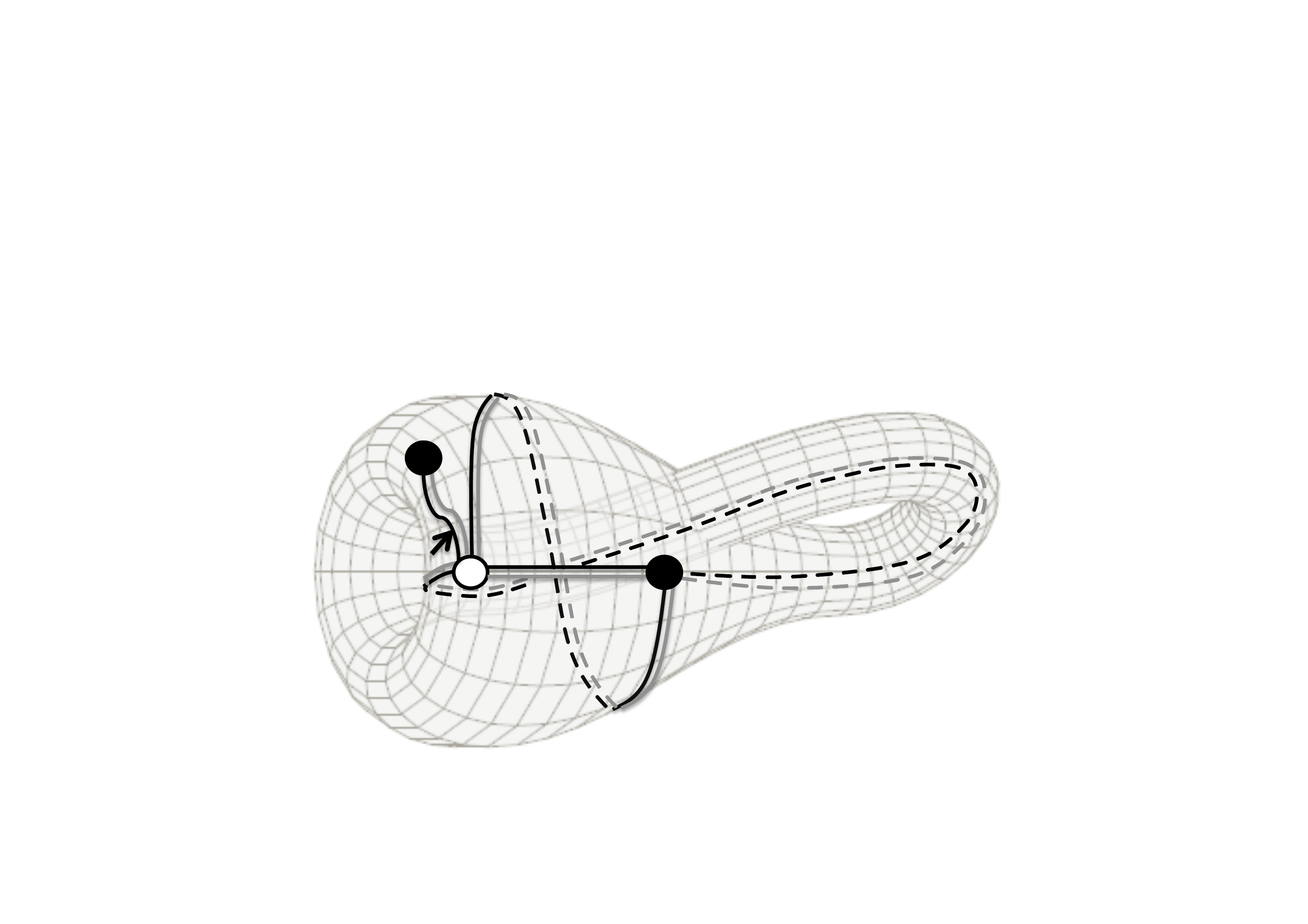}\caption{Examples of star hypermaps embedded in the torus (left) and the Klein bottle (right).}
\label{MapEx} \end{center}
 \end{figure}
 \begin{rem}\label{rem : 2m}
When $\nu = [2^m]$ for some integer $m$, hypermaps reduce to classical (non-bipartite maps). The reduction is obtained by connecting the two edges incident to each black vertex and removing all the black vertices.   Figure~\ref{fig : nbmap} gives an example of a non-bipartite star maps represented as a ribbon graph. 
\begin{figure}[htbp]
\begin{center}
 \includegraphics[scale=0.38]{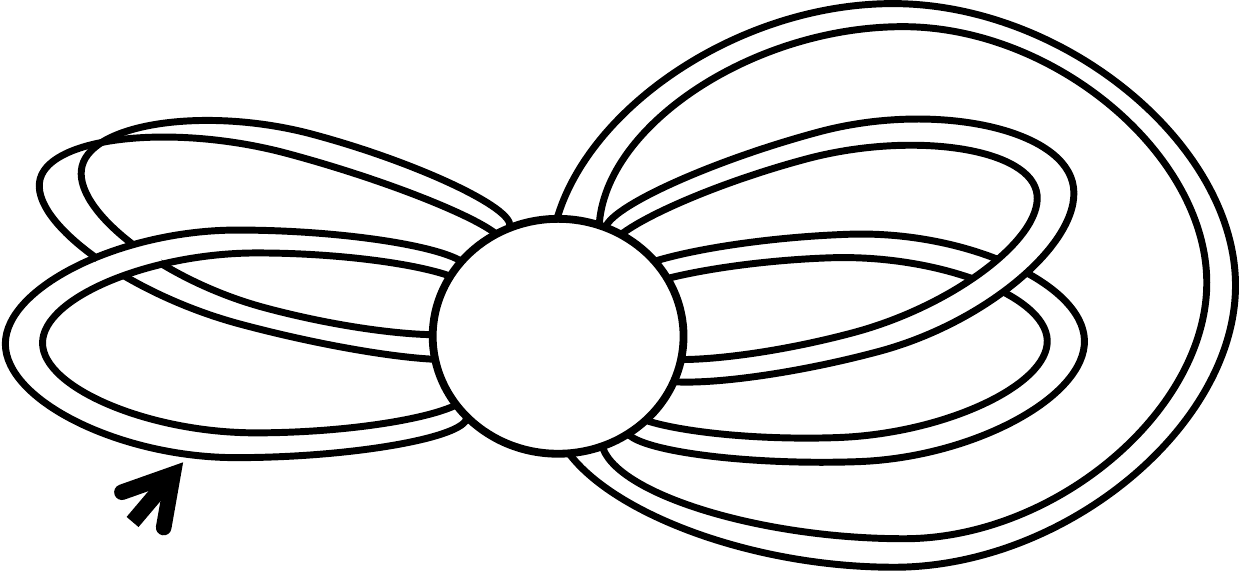}
 \caption{Example a non-bipartite star map.}
\label{fig : nbmap} \end{center}
 \end{figure}
 \end{rem}

In this paper, we consider {\bf labelled star hypermaps}, i.e. hypermaps where the $\ell(\nu)$ black vertices are labelled by integers $1,\cdots,\ell(\nu)$ such that the vertex incident to the root is labelled $1$. Denote $d_i$ the degree of the black vertex indexed $i$, we further assume that the edges incident to the black vertex indexed $i$ are labelled with $$\sum_{1\leq j < i}d_j+1,\sum_{1\leq j < i}d_j+2,\cdots,\sum_{1\leq j \leq i}d_j$$ with the additional condition that the root edge (incident to the black vertex indexed $1$) is labelled $1$. For $\la, \nu \vdash n$, denote $\widetilde{\mathcal{L}}^\la_{\nu}$ the set of labelled star hypermaps with face degree distribution $\la$ and black vertices degree distribution $\nu$. We focus on the special case $\nu =[k^m]$. Clearly, 
$$|\widetilde{\mathcal{L}}^\la_{[k^m]}| = (m-1)!k!^{m-1}(k-1)!l^\la_{n,[k^m]} = \frac{m!k!^m}{n}l^\la_{n,[k^m]}.$$
\begin{exm}
The two ribbon graphs depicted on Figure~\ref{MapExLab} are labelled star hypermaps with $\nu = [3^m]$ (left-hand side) and $\nu = [2^m]$ (right-hand side). \end{exm}
\begin{figure}[htbp]
\begin{center}
 \includegraphics[scale=0.42]{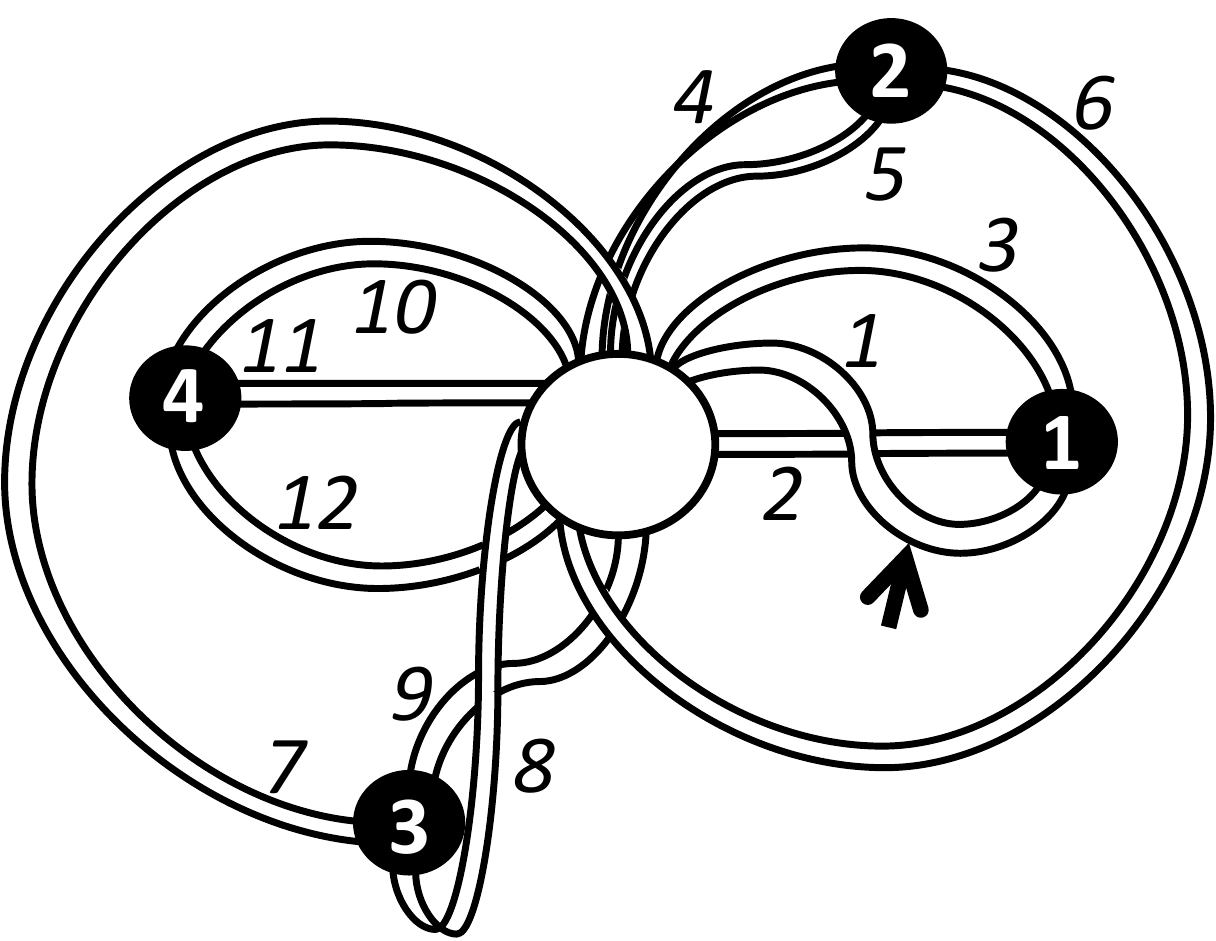}\qquad \includegraphics[scale=0.38]{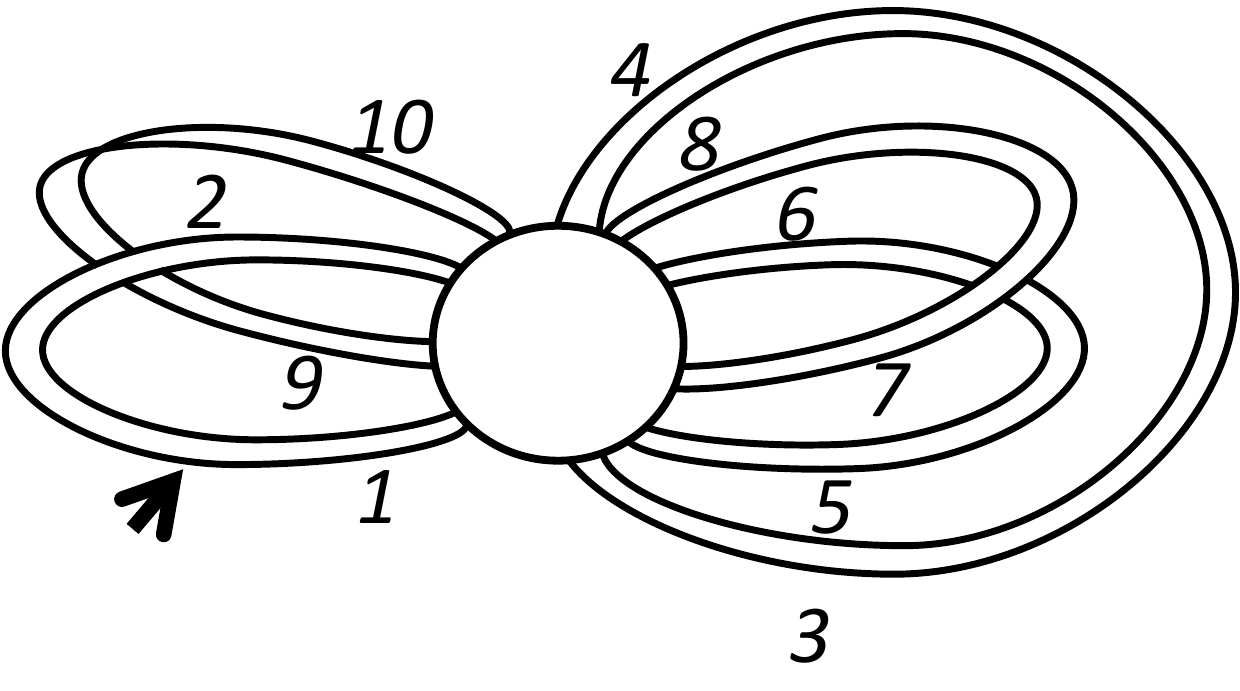}\caption{Examples of labelled star hypermaps.}
\label{MapExLab} \end{center}
 \end{figure}

\subsection{Relation to the matchings-Jack and hypermap-Jack conjectures}

One can show (see e.g. \cite{GouJac96}, \cite{HanStaSte92}, \cite{Vas15a}) that the numbers $c^\la_{\mu,\nu}$, $b^\la_{\mu,\nu}$ and numbers of hypermaps are linked to the coefficients $a^{\la}_{\mu,\nu}(\alpha)$ and $h^{\la}_{\mu,\nu}(\alpha)$:
\begin{align*} a_{\mu,\nu}^\la(1) &= c_{\mu,\nu}^\la \quad \text{ and } 
 \quad a_{\mu,\nu}^\la(2) = \frac{1}{|B_n|}b^{\la}_{\mu,\nu},\\
 h_{\mu,\nu}^\la(1) &= m_{\mu,\nu}^\la \;\; \text{ and } 
 \quad h_{\mu,\nu}^\la(2) = l^\la_{\mu,\nu}.\end{align*}
 
For general values of $\al$ Goulden and Jackson conjecture the following relations between $a^{\la}_{\mu,\nu}(\alpha)$ (resp. $h^{\la}_{\mu,\nu}(\alpha)$) and sets of matchings (resp. hypermaps).

\begin{conj}[Matchings-Jack conjecture, \cite{GouJac96}, conjecture 4.2] \label{conjM} For $\la,\mu,\nu\vdash n$ there exists a function 
$\wt_\la\colon \mathcal{G}^\la_{\mu,\nu}\to \{0,1,\cdots, n-\min\{\ell(\mu),\ell(\nu)\}\}$ such that 
$$a_{\mu,\nu}^\la(\be+1) = \sum_{\de\in \mathcal{G}^\la_{\mu,\nu}}\be^{\wt_{\la}(\de)}$$
and $\wt_\la(\de)=0 \iff \de$ is bipartite.
\end{conj}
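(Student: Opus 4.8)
The plan is to extract the coefficients $a^\la_{\mu,\nu}(\al)$ as a Jack analogue of a triple connection coefficient, settle the two boundary specializations and the degree bound from the algebraic side, then construct the statistic $\wt_\la$ by a root-deletion recursion on matchings and match its generating function against an algebraic recursion for $a^\la_{\mu,\nu}(\be+1)$. First I would write each Jack function in the power-sum basis as $J^\al_\ga=\sum_\rho\theta^\ga_\rho(\al)\,p_\rho$, where the $\theta^\ga_\rho(\al)$ are the Jack characters. Substituting into $\Phi$ and using $\langle p_\rho,p_\rho\rangle_\al=\al^{\ell(\rho)}z_\rho$ gives $\langle J^\al_\ga,J^\al_\ga\rangle_\al=\sum_\rho(\theta^\ga_\rho)^2\al^{\ell(\rho)}z_\rho=j_\ga(\al)$, and comparison with the defining expansion of $\Phi$ yields
\[
\al^{-\ell(\la)}z_\la^{-1}\,a^\la_{\mu,\nu}(\al)=\sum_{\ga\vdash n}\frac{\theta^\ga_\la(\al)\,\theta^\ga_\mu(\al)\,\theta^\ga_\nu(\al)}{j_\ga(\al)}.
\]
This is the Jack-deformed counterpart of the structure constants in Proposition~\ref{combint}: at $\al=1$ one recovers $a^\la_{\mu,\nu}(1)=c^\la_{\mu,\nu}$, the bipartite count $|\{\de\in\GG^\la_{\mu,\nu}\mid\de\text{ bipartite}\}|$, and at $\al=2$ one recovers $a^\la_{\mu,\nu}(2)=b^\la_{\mu,\nu}/|B_n|=|\GG^\la_{\mu,\nu}|$. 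Consequently any statistic with $\wt_\la(\de)=0\iff\de$ bipartite will automatically reproduce the target polynomial at $\be=0$ and $\be=1$; the whole force of the conjecture lies in the interpolation in between.

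Second, I would establish the two structural assertions that do not involve the interpolation. Polynomiality in $\be=\al-1$ and the degree bound $n-\min\{\ell(\mu),\ell(\nu)\}$ should follow from the displayed character formula together with the known description of the $\theta^\ga_\rho(\al)$ as shifted-symmetric polynomials in $\al$: this controls simultaneously the $\al$-degree of the numerator and the cancellation of the denominator $j_\ga(\al)$, and the resulting top $\al$-degree matches the Euler-characteristic bound on the number of ``handles'' a matching in $\GG^\la_{\mu,\nu}$ can carry. This reduces the problem to producing a single statistic achieving the interpolation while respecting both the range $\{0,\dots,n-\min\{\ell(\mu),\ell(\nu)\}\}$ and the bipartite equivalence.

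Third, I would build the candidate weight combinatorially. Fixing the root half-edge at vertex $1$ of ${\bf g}_n$ and deleting it, I would track how the alternating cycle structures of ${\bf g}_n\cup\de$ and ${\bf b}_\la\cup\de$ evolve, following the measure-of-non-orientability philosophy already successful on the hypermap side. Each deletion step either performs a ``two-sided'' (orientable) gluing, contributing $0$, or a ``twisted'' gluing that consumes part of the genus budget, contributing $1$; then $\wt_\la(\de)$ is the total number of twisted steps along the canonical deletion sequence. Since bipartite matchings are exactly those admitting an orientation compatible with the white/black incidence, they should be precisely the $\de$ for which no twisted step ever occurs, yielding the required equivalence $\wt_\la(\de)=0\iff\de$ bipartite.

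The decisive and hardest step is to prove $\sum_{\de\in\GG^\la_{\mu,\nu}}\be^{\wt_\la(\de)}=a^\la_{\mu,\nu}(\be+1)$ for \emph{all} $\be$, not just at the two integer points. My plan is to obtain an algebraic recursion on $a^\la_{\mu,\nu}(\al)$ by applying a derivation or a Sekiguchi--Debiard/Laplace--Beltrami-type operator to $\Phi$, convert it into a recurrence on the generating polynomials, and show that the root-deletion recursion for the weighted matchings satisfies the identical recurrence with matching initial data. The obstruction is exactly that no $\be$-graded combinatorial recursion is known that governs these Jack connection coefficients with all three of $\la,\mu,\nu$ free while remaining compatible with one local non-orientability statistic; equivalently, there is no established topological/loop-equation recursion for this series at generic $\al$. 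This is the open core of the matchings-Jack conjecture. I therefore expect the recursion to close only after imposing extra rigidity on one partition --- concretely the case $\mu=(n)$, where ${\bf g}_n\cup\de$ is a single $2n$-cycle --- in which the deletion procedure becomes linear enough to verify the recurrence term by term.
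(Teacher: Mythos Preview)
The statement you are attempting to prove is stated in the paper as a \emph{conjecture} (Conjecture~\ref{conjM}), not a theorem; the paper does not prove it, and indeed it remains open in full generality. What the paper actually establishes are special cases of a \emph{variant} conjecture (Theorem~\ref{thm : A}) for labelled matchings, restricted to $\mu=(n)$ and to partitions $\nu$ whose parts (with at most one exception) are at most~$3$, together with polynomial properties (Corollary~\ref{thm : cor1}) that still fall short of the full integrality and degree claims implicit in the conjecture. So there is no paper proof to compare against.

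Your proposal is honest about this: you correctly locate the obstruction in the final paragraph, namely that no operator recursion on $\Phi$ is known which, for all three partitions free, produces a $\be$-graded recurrence matched by a single local non-orientability statistic on $\GG^\la_{\mu,\nu}$. That is exactly the gap, and your plan does not close it. Two earlier steps are also softer than you present them. First, ``polynomiality and the degree bound should follow from the shifted-symmetric description of $\theta^\ga_\rho(\al)$'' is not established: Do\l\k{e}ga--F\'eray give polynomiality with \emph{rational} coefficients, but neither integrality nor the sharp degree bound $n-\min\{\ell(\mu),\ell(\nu)\}$ is known in general, and the paper only obtains the analogous statements for $\mu=(n)$ after multiplying by $Aut_\nu|C_\la|$. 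Second, your root-deletion weight is defined at the level of a heuristic; even in the restricted setting $\mu=(n)$ the paper needs a careful case analysis (the hexagon cases in Section~\ref{sec : corA}) to make such a statistic well-defined and to match it term by term to the operator recursion coming from the explicit forms of $\Omega_1,\Omega_2,\Omega_3$. In short, your outline is a reasonable description of the landscape around the conjecture, but it is not a proof, and the paper does not claim one either.
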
 
\begin{conj}[Hypermap-Jack conjecture, \cite{GouJac96}, conjecture 6.3] \label{conjH} For $\la,\mu,\nu\vdash n$ there exists a function 
$\vartheta \colon \mathcal{L}^\la_{\mu,\nu}\to\{0,1,\cdots, n-\min\{\ell(\mu),\ell(\nu)\}\}$ such that
$$h_{\mu,\nu}^\la(\be+1) = \sum_{M \in \mathcal{L}^\la_{\mu,\nu}}\be^{\vartheta(M)}$$
and $\vartheta(M)=0 \iff M$ is orientable.
\end{conj}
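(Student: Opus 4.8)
The plan is to construct the statistic $\vartheta$ explicitly as a \emph{measure of non-orientability} and to verify the generating-function identity by matching it against a recursion satisfied by the coefficients $h^\la_{\mu,\nu}(\al)$. The two endpoints are already available: evaluating at $\be=0$ gives $h^\la_{\mu,\nu}(1)=m^\la_{\mu,\nu}$, the number of orientable hypermaps, and at $\be=1$ gives $h^\la_{\mu,\nu}(2)=l^\la_{\mu,\nu}$, the total number of locally orientable hypermaps. The sought identity is thus an interpolation between these: I want a nonnegative integer statistic that vanishes exactly on orientable hypermaps and whose generating polynomial reproduces $h^\la_{\mu,\nu}(\be+1)$ for all $\al$. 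The orientable specialisation fixes the constant term and pins down the zero locus of $\vartheta$, while the specialisation at $\al=2$ fixes the sum of all coefficients; the degree constraint $n-\min\{\ell(\mu),\ell(\nu)\}$ must then emerge as a bound on the number of orientation-reversing steps.

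First I would set up the algebraic side. Writing the Jack polynomials in the power-sum basis, $J^\al_\ga=\sum_\rho\theta^\ga_\rho(\al)p_\rho$, and extracting coefficients in $\Phi$ yields
$$\al^{-\ell(\la)}z_\la^{-1}a^\la_{\mu,\nu}(\al)=\sum_{\ga\vdash n}\frac{\theta^\ga_\la(\al)\theta^\ga_\mu(\al)\theta^\ga_\nu(\al)}{j_\ga(\al)},$$
and the $h^\la_{\mu,\nu}(\al)$ are the connected analogues produced by the logarithm in the definition of $\Psi$. The task is therefore to show that this sum, after the logarithmic correction, is a polynomial in $\be$ with nonnegative integer coefficients of the prescribed degree, and then to read off its combinatorial meaning in terms of $\mathcal{L}^\la_{\mu,\nu}$.

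Second I would build $\vartheta$ by a root-edge decomposition in the spirit of a measure of non-orientability for maps. Rooting at the distinguished half-edge, I remove the root edge and classify the local gluing of the surface along it into an orientation-preserving case (weight $\be^0$) and an orientation-reversing case (weight $\be^1$, a cross-cap), so that $\vartheta(M)$ counts the reversing steps encountered while unbuilding $M$; connectedness of the objects in $\mathcal{L}^\la_{\mu,\nu}$ matches the connected truncation enforced by the logarithm. Such a decomposition yields a recursion of the shape $h^\la_{\mu,\nu}=\sum(\cdots)+\be\sum(\cdots)$ on the combinatorial side. In parallel I would derive a matching recursion on the algebraic side by applying a Pieri-type or Lassalle-style differential operator to the Cauchy sum $\Phi$: the operator $\al t\,\partial_t$ already present in $\Psi$, combined with the vertex- and face-splitting operators on Jack polynomials, should produce a degree-graded recursion whose two branches correspond to the two gluing types.

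The hard part will be the matching itself in full generality. There is no known combinatorial formula for the Jack characters $\theta^\ga_\rho(\al)$ with manifestly nonnegative $\be$-expansion, so the polynomiality-with-nonnegative-integer-coefficients statement for arbitrary $\la,\mu,\nu$ is not available as an input and must be produced \emph{together with} the statistic. Concretely, the obstacle is to show that the orientation-reversing branch of the root-edge recursion carries exactly one factor of $\be$ and never a genuine rational function of $\al$, i.e.\ that the algebraic recursion has integer $\be$-coefficients branch by branch. Controlling this requires understanding how the $\al$-dependence of $j_\ga(\al)$ and of the operators' eigenvalues collapses onto $\be=\al-1$ after the connected truncation, and this is precisely where the tractable special cases ($\la=[1^n]$, $\la=[1^{n-2}2]$, or $\mu=(n)$) succeed only because the character sums degenerate into manageable closed forms. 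Bridging this gap for general partitions, and thereby certifying that $\vartheta$ takes values in $\{0,1,\dots,n-\min\{\ell(\mu),\ell(\nu)\}\}$ with the correct zero locus, is the crux of the argument.
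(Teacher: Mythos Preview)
The statement you are attempting to prove is an \emph{open conjecture}; the paper does not prove it and does not claim to. Conjecture~\ref{conjH} is quoted from \cite{GouJac96} as motivation, and the paper's contribution is limited to a \emph{variant} of it for \emph{labelled} star hypermaps in the special cases $\mu=(n)$ and $\nu=[k^m]$ with $k\in\{1,2,3,n\}$ (Theorem~\ref{thm : H}). There is therefore no ``paper's own proof'' of this statement to compare against.

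Your proposal is a reasonable sketch of a research programme rather than a proof, and you essentially say so yourself in the final paragraph. The strategy you describe---define a measure of non-orientability via root-edge deletion and match the resulting recursion against one derived from operators acting on the Cauchy kernel---is exactly the mechanism the paper uses in its restricted setting. But the gap you identify is genuine and unresolved: for general $\la,\mu,\nu$ no one knows how to show that the algebraic recursion has branch-by-branch nonnegative integer $\be$-coefficients, nor even that $h^\la_{\mu,\nu}(\al)$ has \emph{integer} coefficients in $\be$ (Do{\l}\k{e}ga--F\'eray establish only rational coefficients). The paper's method works only because, when $\mu=(n)$, the operators $\Omega_k$ for small $k$ can be computed explicitly in the power-sum basis and the $\be$-dependence is then visible term by term; this does not extend beyond the handful of cases treated, and the paper says as much. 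Your proposal does not supply the missing idea that would close this gap, so as written it is not a proof of the conjecture.
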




\section{Main results}

We use linear operators for Jack symmetric functions to derive a new formula for the coefficients $a^{\la}_{n,\nu}(\alpha)$ for general $\la$ and $\nu$ which shows their polynomial properties and, as a consequence to Equation~(\ref{eq : ha}),  the polynomial properties of the coefficients $h^{\la}_{n,\nu}(\alpha)$. Making this formula explicit and using some bijective constructions for labelled star hypermaps and matchings, we show a variant of the matchings-Jack and the hypermap-Jack conjectures for labelled objects in some important cases.\\ 
Denote $D_\al$, the Laplace-Beltrami operator. Namely,
$$D_\al = \frac{\al}{2}\sum_{i}x_i^2\frac{\partial^2}{\partial x_i^2}+\sum_{i\neq j}\frac{x_ix_j}{x_i-x_j}\frac{\partial}{\partial x_i}$$
and let $\Delta$ and $\{\Omega_{k}\}_{k \geq 1}$ be the operators on symmetric functions defined by  $$\Delta = [D_\al,[D_\al,p_1/\al]],\;\;\;\; \Omega_{1} = [D_\al,p_1/\al],\;\;\;\; \Omega_{k+1} = [\Delta,\Omega_{k}].$$

\noindent Our main result can be stated as follows
\begin{thm}
\label{thm : main}
For any integer $n$ and  $\la, \nu \vdash n$, the coefficients  $a^{\la}_{n,\nu}(\al)$ verify:
\begin{align}
\label{eq : main}
Aut_\nu \sum_{\la \vdash n}z_\la^{-1}\al^{-\ell(\la)}a^{\la}_{n,\nu}(\al)p_\la = \frac{1}{\prod_{i \geq 1} \nu_i!}\left(\prod_{i\geq 2}\Omega_{\nu_i}\right)\Delta^{\nu_1-1}(p_1/\al).
\end{align}
\end{thm}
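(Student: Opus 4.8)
The plan is to extract the coefficients $a^\la_{n,\nu}(\al)$ directly from the series $\Phi$ by exploiting the fact that the operators $D_\al$ and $p_1/\al$ act in a controlled way on the Jack basis. Recall two classical facts: first, $J^\al_\ga$ is an eigenfunction of the Laplace--Beltrami operator, $D_\al J^\al_\ga = (\text{eigenvalue}(\ga))\, J^\al_\ga$; second, the operator $p_1^\perp$ (adjoint of multiplication by $p_1$ with respect to $\langle\cdot,\cdot\rangle_\al$) together with multiplication by $p_1$ generate, via the commutators with $D_\al$, exactly the operators $\Delta$ and $\Omega_k$ above. The point of the operator $\Omega_k$ is that $\Omega_k$ applied to a Jack function produces, up to an explicit scalar depending only on $\ga$ through its eigenvalue data, the effect of ``attaching a part of size $k$'' — this is the Jack-analogue of the fact that in the Schur/character world, multiplication by a power sum $p_k$ is the combinatorial operation recorded by the structure constants $c^\la_{\mu\nu}$ and by its $b$-deformation $a^\la_{\mu\nu}(\al)$.

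Concretely, I would first write $\Phi$ in the form $\sum_\ga j_\ga(\al)^{-1} J^\al_\ga(x)J^\al_\ga(y)J^\al_\ga(z)t^{|\ga|}$ and observe that, as an operator identity in the $y$-variables (thinking of the $x,z$ part as a fixed ``kernel''), applying $\Omega_{\nu_i}$ for $i\geq 2$ and $\Delta^{\nu_1-1}$ to the degree-one piece $p_1/\al = \sum_\ga (\text{coeff})\,J^\al_\ga$ reconstructs a weighted sum over $\ga$. The left-hand side of \eqref{eq : main} is, by the defining expansion of $\Phi$ in power sums and by extracting the coefficient of $p_\nu(y)$, precisely $\sum_\la z_\la^{-1}\al^{-\ell(\la)}a^\la_{n,\nu}(\al)p_\la(x)$ up to the combinatorial normalization $Aut_\nu$; so the task reduces to showing that the right-hand side, which is built purely from the $D_\al$-operators acting on $p_1/\al$, equals that same quantity. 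The bridge is the ``pieri-type'' evaluation: I would establish (or cite, from the Jack literature — e.g. via Lassalle's or Dołęga--Féray's work, or Goulden--Jackson's original paper) that $\langle \Omega_k F, J^\al_\ga\rangle_\al$ and $\langle \Delta F, J^\al_\ga\rangle_\al$ are governed by the same recursion that generates the $a^\la_{n,\nu}(\al)$ from the series $\Phi$, because $\Delta$ is the ``add one more box to the single row'' step and $\Omega_k$ is the ``close off a black vertex of degree $k$'' step. Iterating $\Delta^{\nu_1-1}$ and then the $\Omega_{\nu_i}$'s, and matching the scalar $\prod_i \nu_i!^{-1}$ against the overcounting introduced by the non-commuting order of operations and the labellings, yields the identity.

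The key steps, in order: (1) rewrite $\Phi$ in Jack form and reduce the theorem to an operator statement in one set of variables; (2) prove the eigen-relation and the Pieri-type action of $\Omega_k$ and $\Delta$ on Jack functions, identifying the precise scalar factors; (3) show that $\Omega_1 = [D_\al, p_1/\al]$ applied iteratively (which is what $\Delta^{\nu_1 - 1}$, built from nested commutators, amounts to) generates the one-row family and that the series coefficient $[p_n(y)]\Phi$ is recovered as $\Delta^{n-1}(p_1/\al)$ up to normalization — this is essentially the $\nu=(n)$, i.e. $\ell(\nu)=1$, base case; (4) handle the general $\nu$ by inserting the $\Omega_{\nu_i}$ operators and carefully tracking the factorial normalizations and the $Aut_\nu$ factor that accounts for permuting equal parts. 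The main obstacle I expect is step (2) together with the bookkeeping in step (4): getting the scalar in the Pieri-type formula for $\Omega_k$ exactly right (it must be independent of $\ga$ in just the right way for the sum over $\ga$ to telescope into the series coefficient) and then reconciling the ordered product $\prod_{i\geq 2}\Omega_{\nu_i}$ and the power $\Delta^{\nu_1-1}$ — which do not obviously commute — with the symmetric normalization $\tfrac{1}{\prod_i \nu_i!}$; this is where a clean lemma on how these operators act on the ``partial'' symmetric functions, proved by induction on $|\nu|$, will do the heavy lifting.
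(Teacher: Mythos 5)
Your plan follows the same architecture as the paper's proof: extract $[p_n(y)p_\nu(z)]\Phi$, interpret $\Delta^{\nu_1-1}(p_1/\al)$ as generating the one-part base case and each $\Omega_{\nu_i}$ as ``closing off'' a part of size $\nu_i$, and absorb the factorials and $Aut_\nu$ into bookkeeping (your step (4) worry is in fact a non-issue: each $1/\nu_i!$ is exactly the normalisation of the dual operator $\Pi_{\nu_i}=\nu_i!\,\partial/\partial p_{\nu_i}$, and the multiplicities $m_{\nu_i}(\nu)$ assemble into $Aut_\nu$). The gap is in your step (2), which is where all the real content lives and which you propose to ``establish or cite'' as a Pieri-type evaluation of $\langle \Omega_k F, J^\al_\ga\rangle_\al$. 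There is no clean Pieri rule for $\Omega_k$ on Jack functions, and no scalar ``independent of $\ga$'' to identify: $\Omega_k J^\al_\ga$ is not a tidy combination of Jack functions. What is actually needed --- and what the paper proves as its key lemma --- is a \emph{transfer identity across the weighted diagonal kernel}
\begin{equation*}
\sum_{\rho\vdash n+k} \frac{\theta^\rho_{n+k}(\al)\,J^\al_\rho(x)\,\Pi_k J^\al_\rho(y)}{j_\rho(\al)}
=\sum_{\ga\vdash n} \frac{\theta^\ga_{n}(\al)\,J^\al_\ga(y)\,\Omega_k J^\al_\ga(x)}{j_\ga(\al)},
\end{equation*}
where $\Pi_1=\partial/\partial p_1$ and $\Pi_{k+1}=[\Pi_k,E_2^\perp]$ is the mirror tower of commutators on the $y$-side, which collapses to $k!\,\partial/\partial p_k$ and hence literally deletes a part $k$ from $p_\nu(y)$.

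Proving that identity is not a citation away from the standard Jack toolbox. It requires (a) Lassalle's Pieri formulas for $p_1$ and $p_1^\perp$ with the coefficients $c_i(\ga)$, and (b) the arithmetic fact $\theta^{\ga^{(i)}}_{n+1}(\al)=\theta^{\ga}_{n}(\al)\bigl(\theta^{\ga^{(i)}}_{[1^{n-1}2]}(\al)-\theta^{\ga}_{[1^{n-2}2]}(\al)\bigr)$, which ties the top power-sum coefficient of a Jack function across a single box addition to the Laplace--Beltrami eigenvalues. Point (b) is precisely what makes the sum over $\ga$ telescope into commutators of $D_\al$ with $p_1$, it is the only place the restriction $\mu=(n)$ enters (the weights $\theta^\rho_N$ in the kernel come from extracting $p_N$ in the third variable), and it has no analogue for general $\mu$. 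Once the $k=1$ case is in hand, the general $k$ follows by an induction that uses the companion identity exchanging $E_2^\perp$ on the $y$-side for $\Delta$ on the $x$-side at each step. Without supplying (b) --- or an equivalent mechanism --- your ``bridge'' does not carry weight, so as written the proposal reproduces the paper's scaffolding but leaves its load-bearing lemma unproved.
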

As a consequence to Theorem \ref{thm : main}, we have the following polynomial properties.
\begin{cor}
\label{thm : cor1}
For $\la,\nu \vdash n$, $Aut_\nu|C_\la|a^{\la}_{n,\nu}(\al)$ and $Aut_\nu{\prod_{i \geq 1} \nu_i!}h^{\la}_{n,\nu}(\al)$ are polynomials in $\al$ with integer coefficients of respective degrees at most $n-\ell(\nu)$ and $n+1-\ell(\la)-\ell(\nu)$.
\end{cor}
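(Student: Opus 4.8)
The proof will extract everything from the explicit power-sum form of the Laplace--Beltrami operator and the formula of Theorem~\ref{thm : main}.

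First I would rewrite $D_\al$ in the variables $p_1,p_2,\dots$, obtaining $D_\al=A+B+C$ with $A=\tfrac{\al}{2}\sum_{i,j\geq1}ij\,p_{i+j}\partial_{p_i}\partial_{p_j}$, $B=\tfrac12\sum_{k\geq1}k\bigl(\sum_{i+j=k}p_ip_j\bigr)\partial_{p_k}$, and $C$ the diagonal operator multiplying $p_\la$ by $\tfrac{\al-1}{2}\sum_kk(k-1)m_k(\la)$. On the power-sum basis these three pieces are transparent: $A$ carries a factor $\al$ and merges two parts into one, so it lowers $\ell$ by one; $B$ is $\al$-free and splits one part into two, so it raises $\ell$ by one; and $C$ fixes $\la$ and multiplies by a polynomial of degree at most one in $\al$. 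A routine check --- the integers $k(k-1)$ and $m_k(m_k-1)$ are even, and a monomial $p_ip_j$ with $i\neq j$ is produced twice --- shows that $A$, $B$, $C$, hence $D_\al$, all map $\ZZ[\al]\{p_\la\}$ into itself.

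Next I would identify $\Omega_1$, $\Delta$ and the $\Omega_k$. Since scalar division by $\al$ commutes with $D_\al$, $\Omega_1=[D_\al,p_1/\al]=\al^{-1}[D_\al,p_1]$, where $p_1$ denotes multiplication by $p_1$; the commutator $[D_\al,p_1]$ computes to $\al\,\mathcal{E}$ with $\mathcal{E}=\sum_{k\geq1}k\,p_{k+1}\partial_{p_k}$ (the $B$- and $C$-parts contribute nothing, since a part of size $1$ cannot be split and since $k(k-1)$ vanishes at $k=1$), so $\Omega_1=\mathcal{E}$ --- an $\al$-free derivation that raises every part by one, hence preserves $\ell$, and, crucially, carries no denominator. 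Being commutators of operators that preserve $\ZZ[\al]\{p_\la\}$, both $\Delta=[D_\al,\mathcal{E}]$ and all $\Omega_k$ preserve it. Now give a monomial $c(\al)p_\la$ the weight $w=\deg_\al c+\ell(\la)$. I claim $\Omega_1$ does not raise $w$ and $\Delta$ raises $w$ by at most one, from which (using $\Omega_{k+1}=[\Delta,\Omega_k]$) induction gives that $\Omega_k$ raises $w$ by at most $k-1$. The statement for $\Omega_1$ is immediate; for $\Delta$ one splits $[D_\al,\mathcal{E}]=[A,\mathcal{E}]+[B,\mathcal{E}]+[C,\mathcal{E}]$ and checks that $[A,\mathcal{E}]$ keeps the factor $\al$ of $A$ (so $\deg_\al$ goes up by one) but, like $A$, lowers $\ell$ by one; $[B,\mathcal{E}]$ is $\al$-free and raises $\ell$ by one; and $[C,\mathcal{E}]=(\al-1)\sum_kk^2p_{k+1}\partial_{p_k}$ raises $\deg_\al$ by one but fixes $\ell$. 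Hence each of the three summands raises $w$ by at most one. This is the heart of the argument, and the delicate part is precisely this bookkeeping for the second-order commutator $[A,\mathcal{E}]$: one must make sure the power of $\al$ it gains is paid for, part for part, by a drop in $\ell$.

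Finally I would apply $P:=\bigl(\prod_{i\geq2}\Omega_{\nu_i}\bigr)\Delta^{\nu_1-1}$ to $p_1$. Since $w(p_1)=1$, the above gives $P(p_1)=\sum_\la q_\la(\al)\,p_\la$ with $q_\la\in\ZZ[\al]$ and $\deg_\al q_\la+\ell(\la)\leq 1+(\nu_1-1)+\sum_{i\geq2}(\nu_i-1)=n-\ell(\nu)+1$. Pulling the $\al^{-1}$ out of $p_1/\al$ and comparing the coefficient of $p_\la$ in Theorem~\ref{thm : main} gives $Aut_\nu\,z_\la^{-1}\al^{-\ell(\la)}a^{\la}_{n,\nu}(\al)=q_\la(\al)/(\al\prod_i\nu_i!)$, hence $Aut_\nu\,|C_\la|\,a^{\la}_{n,\nu}(\al)=\tfrac{n!}{\prod_i\nu_i!}\,\al^{\ell(\la)-1}q_\la(\al)$. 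This is an honest polynomial in $\al$ --- the multinomial coefficient $n!/\prod_i\nu_i!$ is an integer since $\nu\vdash n$, and $\ell(\la)\geq1$ --- of degree at most $(\ell(\la)-1)+(n-\ell(\nu)+1-\ell(\la))=n-\ell(\nu)$. And Equation~(\ref{eq : ha}) gives $h^{\la}_{n,\nu}(\al)=\al n\,z_\la^{-1}\al^{-\ell(\la)}a^{\la}_{n,\nu}(\al)=n\,q_\la(\al)/(Aut_\nu\prod_i\nu_i!)$, so $Aut_\nu\prod_i\nu_i!\,h^{\la}_{n,\nu}(\al)=n\,q_\la(\al)\in\ZZ[\al]$ has degree at most $n+1-\ell(\la)-\ell(\nu)$, which completes the proof.
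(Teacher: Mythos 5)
Your proposal is correct, and the integrality half of your argument is essentially the paper's: both start from Stanley's power-sum expression of $D_\al$, observe that the half-integer coefficients are repaired by the evenness of $k(k-1)$ and $m_k(m_k-1)$ and by the double counting of $p_ip_j$ for $i\neq j$, and conclude that $\Delta$ and the $\Omega_k$ preserve the $\ZZ[\al]$-span of the $p_\la$. Where you genuinely diverge is the degree bound. The paper gets it from the duality of \cite[Thm.~5]{Vas15a}, namely $\al^{\ell(\la)}a^\la_{n,\nu}(\al^{-1}) = (-\al)^{-n+1+\ell(\la)+\ell(\nu)}\al^{-\ell(\la)}a^\la_{n,\nu}(\al)$: writing $Aut_\nu|C_\la|\al^{-\ell(\la)}a^\la_{n,\nu}(\al)=\al^{-1}\sum_i g^i_{\la,\nu}\al^i$ and equating coefficients forces $g^i_{\la,\nu}=0$ for $i>n+1-\ell(\la)-\ell(\nu)$. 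You instead prove the bound internally, by filtering monomials $c(\al)p_\la$ by the weight $w=\deg_\al c+\ell(\la)$ and checking that $E_2$ preserves $w$, that each of the three pieces $[A,E_2]$, $[B,E_2]$, $[C,E_2]$ of $\Delta$ raises $w$ by at most one (the decisive point being that $[A,E_2]$ pays for its extra factor of $\al$ by dropping $\ell$ by one), and hence by induction that $\Omega_k$ raises $w$ by at most $k-1$; applying the operator product to $p_1$ then gives $\deg q_\la\leq n+1-\ell(\la)-\ell(\nu)$ directly. I checked your weight bookkeeping against the explicit forms of $\Omega_2$ and $\Omega_3$ in Lemma~\ref{thm : lemOm} and it is consistent term by term. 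Your route buys self-containedness (no appeal to the external symmetry result) and incidentally delivers the cleaner identity $Aut_\nu\prod_i\nu_i!\,h^\la_{n,\nu}(\al)=n\,q_\la(\al)$, matching the normalisation in the statement of the corollary; the paper's route is shorter on the page but outsources the real content of the degree bound to \cite{Vas15a}.
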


Explicit computation of operators $\Omega_{k}$ for $k=1,2,3$ and $\Delta$, allows us to show:

\begin{thm} \label{thm : A} For $\la, \nu \vdash n$, define $\wit{a}^\la_{n,\nu}(\al)= ({Aut_\nu}/{m_1(\nu)!})a^\la_{n,\nu}(\al).$ If all but one part of $\nu$ are less or equal to $3$, there exists a function $\wt \colon \wit{\GG}^\la_{\nu}\to \{0,1,2,\cdots,n-\ell(\nu)\}$ such that
$$\wit{a}^\la_{n,\nu}(\be+1)=\sum_{\de \in \wit{\GG}^\la_{\nu}} \be^{\wt(\de)}$$ and $\wt(\delta) = 0 \iff \de$ is bipartite. 
\end{thm}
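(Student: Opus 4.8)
The plan is to make Theorem~\ref{thm : main} explicit in the regime where every part of $\nu$ except $\nu_1$ lies in $\{1,2,3\}$, and then read off a combinatorial interpretation of the resulting polynomial. First I would compute the operators $\Omega_1,\Omega_2,\Omega_3$ and $\Delta$ explicitly in the power-sum basis. The operator $\Delta=[D_\al,[D_\al,p_1/\al]]$ acts on symmetric functions by a known formula: up to the normalisation built into the scalar product, $\Delta$ is (a conjugate of) the operator whose matrix elements on $p_\la$ are governed by the structure constants $c^\la_{2,\nu}$ together with a $\be$-deformation coming from the $\al$-dependence of $D_\al$; concretely one expects $\Delta(p_\la)=\sum_\mu (\text{something linear in }\be)\, p_\mu$ where the sum ranges over $\mu$ obtained from $\la$ by merging two parts, splitting a part, or (the genuinely $\be$-carrying term) the "twist" operation that corresponds to a non-bipartite gluing of a square. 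Similarly $\Omega_2$ and $\Omega_3$ attach a $2$-cycle or $3$-cycle to the configuration, again with explicit $\be$-linear coefficients. I would organise these computations so that each elementary move on partitions corresponds to an elementary move on matchings in $\wit\GG^\la_\nu$: adding a new black square (for $\Omega_2$) or a new black hexagon (for $\Omega_3$) glued to the current matching, in one of several ways, of which exactly one is "orientable/bipartite" (contributing $\be^0$) and the others carry a factor of $\be$.

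Next I would set up the bijection. Reading the right-hand side of \eqref{eq : main} from the inside out, $\Delta^{\nu_1-1}(p_1/\al)$ builds the "long" part of the matching of length $2\nu_1$ by a sequence of $\nu_1-1$ square-insertions, and then each $\Omega_{\nu_i}$ with $\nu_i\in\{2,3\}$ attaches the corresponding short piece. The key point is that iterating the explicit formulas produces, after clearing the denominators $\prod \nu_i!$ and the factor $Aut_\nu/m_1(\nu)!$, a sum over sequences of moves, and each such sequence can be decoded as a labelled matching $\de\in\wit\GG^\la_\nu$ together with a bookkeeping of how many non-bipartite gluings were used; that count is the exponent $\wt(\de)$. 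The labelling of the non-unit cycles of ${\bf b}_\la\cup\bar\de$ is exactly what absorbs the permutation factors $\sigma_i$ in the definition of $\wit\GG^\la_\nu$, and the factor $Aut_\nu/m_1(\nu)!=|\wit\GG^\la_\nu|/|\GG^\la_{n,\nu}|$ in the statement is precisely the ratio that makes the bijection exact rather than just an equality of generating functions. One must check that $\wt(\de)$ depends only on $\de$ and not on the decoding sequence — i.e. that two sequences of moves producing the same labelled matching use the same number of non-bipartite steps; this should follow from an Euler-characteristic / genus argument, since the number of $\be$'s is forced to be $n-\ell(\la)-\ell(\nu)+1$ minus twice the genus-type quantity of the underlying surface, matching the degree bound already recorded in Corollary~\ref{thm : cor1}.

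I would then verify the boundary condition $\wt(\de)=0\iff\de$ bipartite. One direction is automatic from the construction: if no non-bipartite gluing is ever used, every square/hexagon is attached in the orientable way and the resulting matching is bipartite. For the converse I would argue that any bipartite $\de$ can be built by a unique (or at least a canonical) sequence of orientable moves — equivalently, that the number of bipartite matchings in $\wit\GG^\la_\nu$ equals the constant term $\wit a^\la_{n,\nu}(1)$, which by the $\al=1$ specialisation equals the relevant structure constant $c^\la_{n,\nu}$ (suitably normalised), and then that the orientable moves realise exactly the factorisations counted by $c^\la_{n,\nu}$. This is the same mechanism as in the classical ($\al=1$) interpretation of connection coefficients by bipartite matchings, so the bipartite case should reduce to a known bijection.

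The main obstacle, I expect, is twofold. The lesser difficulty is purely computational: getting the explicit expansions of $\Omega_3$ and $\Delta$ right, including all the lower-order terms (splittings into more than two parts, self-mergings, etc.), and checking that all coefficients are genuinely $\be$-linear with the correct signs after the alternating-sum structure of $\log\Phi$ is taken into account — here the hypothesis "all but one part $\le 3$" is exactly what keeps the number of distinct elementary moves finite and small enough to enumerate by hand. The real difficulty is the \emph{well-definedness} of $\wt$: showing that the statistic extracted from the algebraic computation is invariant under the many ways of decoding the same labelled matching, and simultaneously that the map (labelled matching, decoding) $\to$ monomial is a genuine bijection onto the terms of the expansion, with no cancellation. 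I would handle this by fixing a canonical insertion order (say, always attach the piece containing the largest available edge-label last, and within a piece attach in a prescribed order) so that the decoding becomes unique, thereby turning "equality of generating functions plus a counting check" into an honest bijection; the invariance of $\wt$ then becomes the statement that this canonical sequence uses the combinatorially-forced number of $\be$'s, which is pinned down by the degree computation of Corollary~\ref{thm : cor1}.
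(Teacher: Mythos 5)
Your overall architecture coincides with the paper's: compute $\Omega_1,\Omega_2,\Omega_3$ and $\Delta$ explicitly, turn Theorem~\ref{thm : main} into a recurrence for $\wit{a}^\la_{n,\nu}$ that peels off one part of $\nu$ at a time (first the $1$-parts, then the $2$-parts, then the $3$-parts, with the base case $\nu=(n)$ taken from \cite{KanVas16}), define $\wt$ recursively by deleting the corresponding labelled cycle of ${\bf b}_\la\cup\bar\de$, and match the two recurrences by local bijections. The cycle labels are precisely what makes the deletion order canonical, so the well-definedness worry you single out is resolved exactly the way you propose; and your observation that the factor $Aut_\nu/m_1(\nu)!$ is what turns the identity of generating functions into a genuine bijection is correct.

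The genuine gap is in your description of the statistic itself. You assert that each elementary attachment of a square or hexagon comes in several flavours, exactly one of which is bipartite (contributing $\be^0$) while ``the others carry a factor of $\be$'', and that $\wt(\de)$ is the number of non-bipartite gluings used. This is true for $2$-parts (the coefficients in $\Omega_2$ are $\be$, $1$ and $1+\be$, so each square adds $0$ or $1$ to the weight), but it fails for $3$-parts: the coefficients appearing in $\Omega_3$ are $2\be^2+\be+1$, $2\be^2+4\be+2$, $\be^2+\be$, and so on. Concretely, on a fixed triple of black edges inside one $\la_i$-component there are four possible hexagons, one of them bipartite; your rule would contribute $1+3\be$, whereas the algebra forces $1+\be+2\be^2$, so two of the three non-bipartite hexagons must receive weight $2$. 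Hence $\wt$ cannot be ``number of non-bipartite attachments''; it must be an assignment of weights in $\{0,1,2\}$ per hexagon, chosen (non-canonically --- one simply decides which non-bipartite hexagon gets $1$ and which get $2$) so as to reproduce the coefficients of $\Omega_3$ case by case, and the range bound $\wt\leq n-\ell(\nu)$ then needs the separate observation that attaching a part of size $d\in\{1,2,3\}$ increases the weight by at most $d-1$. Your proposed Euler-characteristic/genus argument for pinning down the exponent would likewise break here, since matchings with the same underlying topology can legitimately receive different weights under such a scheme. None of this derails the strategy, but the combinatorial heart of the case $i=3$ --- the actual weight assignment for hexagons --- is missing, and the statistic you wrote down gives the wrong polynomial.
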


\begin{thm}
For $\la \vdash n$ and integers $k$ and $m$ with $n=km$, define $\widetilde{h}_{n,[k^m]}^\la(\alpha) = \frac {m!k!^m} {n}  {h}_{n,[k^m]}^\la(\alpha).$ For all $k \in \{1,2,3,n\}$ there exists a function
$\vartheta \colon \widetilde{\mathcal{L}}^\la_{[k^m]} \to \{0,1,2,\cdots, n+1-\ell(\la)-m\}$ such that
$$\widetilde{h}_{n, [k^m]}^\la(\be+1) = \sum_{M \in \widetilde{\mathcal{L}}^\la_{[k^m]}}\be^{\vartheta(M)}$$
and $\vartheta(M) = 0 \iff M$ is orientable.
\label{thm : H}
\end{thm}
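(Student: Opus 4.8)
The plan is to reduce Theorem~\ref{thm : H} to Theorem~\ref{thm : A} via the dictionary between labelled star hypermaps and labelled matchings, exploiting the relation~(\ref{eq : ha}) together with the normalizations already introduced. First I would observe that the special shape $\nu=[k^m]$ means $m_1(\nu)=0$ unless $k=1$, so $Aut_\nu = m_i(\nu)!$ for the single relevant part size, and the factor $\prod_i \nu_i! = k!^m$ appears in both normalizations in a compatible way; combining~(\ref{eq : ha}) with the definitions of $\wit a$ and $\wit h$ gives an identity of the form $\wit h^\la_{n,[k^m]}(\al) = (\text{explicit monomial in }\al)\cdot \wit a^\la_{n,[k^m]}(\al)$, where the monomial prefactor is a power of $\al$ times a positive rational, and after substituting $\al=\be+1$ this prefactor must be checked to be exactly $1$ — this is the bookkeeping that forces the degree bound $n+1-\ell(\la)-m$ in the statement (it is the degree bound $n-\ell(\nu)=n-m$ of Theorem~\ref{thm : A}, shifted by the $\al^{1-\ell(\la)}$ coming from $z_\la^{-1}\al^{-\ell(\la)}$ in~(\ref{eq : ha}) and the extra factor $\al n$). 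For $k\in\{1,2,3\}$ the hypothesis ``all but one part $\le 3$'' of Theorem~\ref{thm : A} is satisfied since \emph{every} part equals $k\le 3$, so $\wit a^\la_{n,[k^m]}(\be+1)$ is already known to be a $\be$-enumerator over $\wit\GG^\la_{[k^m]}$ with the bipartite-iff-weight-zero property.

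Next I would set up the bijection $\wit{\mathcal L}^\la_{[k^m]} \leftrightarrow \wit\GG^\la_{[k^m]}$ and transport the weight function $\wt$ to a weight function $\vartheta$. The combinatorial content is the classical encoding of a bipartite graph embedded in a locally orientable surface by a pair of matchings (or equivalently a pair of permutations up to the $B_n$-action): the black vertices of the hypermap correspond to the black edges of ${\bf b}_\la$, the white vertex of a star hypermap corresponds to ${\bf g}_n$, the faces correspond to the cycles of the superimposition, and the labelling conventions on edges and black vertices match the labelling data $(\sigma_2,\sigma_3,\dots)$ in a labelled matching — here only $\sigma_k$ is nontrivial. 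The orientability of $M$ must correspond exactly to $\bar\de$ being bipartite (this is Proposition~\ref{combint} in spirit: orientable hypermaps are counted by the bipartite matchings), so $\vartheta(M):=\wt(\de)$ automatically satisfies $\vartheta(M)=0\iff M$ orientable. One then defines $\vartheta$ on $\wit{\mathcal L}^\la_{[k^m]}$ by this transport and the generating-function identity $\wit h^\la_{n,[k^m]}(\be+1)=\wit a^\la_{n,[k^m]}(\be+1)=\sum_\de \be^{\wt(\de)}=\sum_M\be^{\vartheta(M)}$ finishes the cases $k\le 3$.

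The remaining case $k=n$, i.e.\ $\nu=(n)$ (so $m=1$), must be treated separately because it falls outside the hypothesis of Theorem~\ref{thm : A} (it is the case $\mu=\nu=(n)$ where \emph{both} are one-part). Here I would go back to Theorem~\ref{thm : main}: when $\nu=(n)$ the right-hand side of~(\ref{eq : main}) is simply $\tfrac1{n!}\Delta^{n-1}(p_1/\al)$, so $\sum_\la z_\la^{-1}\al^{-\ell(\la)}a^\la_{n,(n)}(\al)p_\la = \tfrac1{n!}\Delta^{n-1}(p_1/\al)$, and extracting $[p_\la]$ gives a closed form for $a^\la_{n,(n)}(\al)$; via~(\ref{eq : ha}) this is $h^\la_{n,(n)}(\al)/(\al n)\cdot z_\la\al^{\ell(\la)}$, equivalently $h^\la_{n,(n)}(\al)=a^{(n)}_{\la,(n)}(\al)$ by the Remark following~(\ref{eq : ha}). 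I would then either invoke a known explicit evaluation of $\Delta^{n-1}(p_1/\al)$ (the action of the Laplace--Beltrami operator on $p_1$ iterated, which is computable because $\Delta$ acts triangularly and its eigenvalues on Jack functions are explicit) to read off that $\tfrac{n!}{n}\,h^\la_{n,(n)}(\be+1)=(n-1)!\,h^\la_{n,(n)}(\be+1)$ is a $\be$-enumerator over $\wit{\mathcal L}^\la_{(n)}$ with the orientability property, matching the count $|\wit{\mathcal L}^\la_{(n)}|=(n-1)!\,l^\la_{n,(n)}$ at $\be=1$; the labelled star hypermaps with a single black vertex of degree $n$ are exactly rooted one-face-distribution-$\la$, one-black-vertex, one-white-vertex objects, which are enumerated by a single classical formula whose $b$-deformation is known. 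The main obstacle I anticipate is precisely this $k=n$ endpoint: matching the explicit polynomial coming out of $\Delta^{n-1}(p_1/\al)$ with a weight-generating function over $\wit{\mathcal L}^\la_{(n)}$ requires an honest combinatorial weight on star hypermaps with one black vertex, which is a genuinely different (and more delicate) argument than the transport-of-structure used for $k\le 3$; the secondary obstacle is verifying that in every case the $\al$-prefactor relating $\wit h$ and $\wit a$ collapses to $1$ after the substitution $\al=\be+1$ and that the stated degree bound $n+1-\ell(\la)-m$ is tight, which is pure but careful bookkeeping with $z_\la$, $\al^{-\ell(\la)}$ and the factor $\al n$.
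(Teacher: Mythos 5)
Your central step --- transporting the weight function of Theorem \ref{thm : A} through a bijection $\wit{\mathcal{L}}^\la_{[k^m]}\leftrightarrow\wit{\GG}^\la_{[k^m]}$ --- cannot work, because no such bijection exists: the two sets generally have different cardinalities. The prefactor relating the two families in Equation (\ref{eq : ha}) is $\al n z_\la^{-1}\al^{-\ell(\la)}$, so after inserting the normalisations one gets $\wit{h}^\la_{n,[k^m]}(\al)=C\,\al^{1-\ell(\la)}\,\wit{a}^\la_{n,[k^m]}(\al)$ with $C$ a rational constant depending on $\la$ and $k$; this prefactor is identically $1$ only when $\ell(\la)=1$. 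Your own sentence is internally inconsistent on this point: you ask the prefactor to be ``exactly $1$'' and simultaneously to account for the shift of the degree bound from $n-m$ to $n+1-\ell(\la)-m$, a shift by $\ell(\la)-1$, which is precisely the exponent of $\al$ in the prefactor. A concrete counterexample: for $n=2$, $k=2$, $m=1$, $\la=(1,1)$ one has $\wit{a}^{(1,1)}_{2,(2)}(\be+1)=1+\be$ (two labelled matchings, one bipartite and one not), while $\wit{h}^{(1,1)}_{2,(2)}(\be+1)=1$ (a single labelled star hypermap, which is orientable). So the identity $\wit{h}=\wit{a}$ you rely on is false, ``orientable'' does not correspond to ``bipartite'' under any counting-preserving dictionary, and dividing $\wit{a}(\be+1)$ by $(1+\be)^{\ell(\la)-1}$ has no reason in general to produce a polynomial with non-negative coefficients, let alone one whose coefficients can be read off edge-by-edge from the matching weight. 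This is exactly why the paper remarks that, despite the similarities, the proof of Theorem \ref{thm : H} is \emph{independent} of that of Theorem \ref{thm : A}.

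The paper's actual argument works directly on hypermaps: it adapts Lacroix's measure of non-orientability $\vartheta$ to labelled star hypermaps via a root-deletion process (classifying the deleted root as leaf, cross-border, border or handle), derives recurrence relations for $\wit{h}^\la_{n,[k^m]}(\al)$ from the explicit forms of $\Omega_1,\Omega_2,\Omega_3$ together with Equation (\ref{eq : ha}), and then exhibits bijections on $\wit{\mathcal{L}}^\la_{[k^m]}$ (splitting the maps according to the type of the root edge for $k\in\{1,2,n\}$, and of the whole three-edge root submap for $k=3$) showing that $\sum_M\be^{\vartheta(M)}$ satisfies the same recurrences. Your treatment of $k=n$ is also only a sketch: ``invoke a known explicit evaluation of $\Delta^{n-1}(p_1/\al)$'' is not supplied, and that is where the real work lies; in the paper this case is handled first by the same root-deletion recurrence and then serves as the base case for $k=2$ and $k=3$.
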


\begin{rem}
The focus on labelled objects and this variant of the matchings-Jack and the hypermap-Jack conjectures is motivated by the coefficients $Aut_\nu$ and ${\prod_{i \geq 1} \nu_i!}$ that appear in Equation~(\ref{eq : main}).
\end{rem}
\begin{rem}
One can notice that we consider all the partitions $\nu$ with any number of parts $1,2$ and $3$ in Theorem \ref{thm : A} but only the partitions of the type $\nu = [k^m]$ in Theorem \ref{thm : H}. This is due to the existence of a distinguished (root) edge in the star hypermaps of the later theorem that prevents the extension of our methods to less symmetric cases.   
\end{rem}
\section{Background and prior works}
The following sections provide some relevant background regarding the computation of $c^\la_{\mu,\nu}$, $b^\la_{\mu,\nu}$, $m^\la_{\mu,\nu}$ and $l^\la_{\mu,\nu}$, i.e. the computation of the coefficients $a^\la_{\mu,\nu}(\al)$ and $h^\la_{\mu,\nu}(\al)$ in the classical cases $\al \in \{1,2\}$ and known results for these coefficients with general $\al$.

\subsection{Classical enumeration results for matchings and hypermaps}
Except for special cases no closed formulas are known for the coefficients $c^\la_{\mu,\nu}$, $b^\la_{\mu,\nu}$, $m^\la_{\mu,\nu}$ and $l^\la_{\mu,\nu}$. Prior works on the subjects are usually focused on the case $\la = (n)$. With this particular parameter, one has $c^n_{\mu,\nu} = m^n_{\mu,\nu}$ and $l^n_{\mu,\nu} = \widetilde{b}^n_{\mu,\nu}$. Using an inductive argument B\'{e}dard and Goupil  \cite{BedGou92} first found a formula for $c^n_{\lambda,\mu}$ in the case $\ell(\la)+\ell(\mu)=n+1$, which was later reproved by Goulden and Jackson \cite{GouJac92} via a bijection with a set of ordered rooted bicolored trees. Later, using characters of the symmetric group and a combinatorial development, Goupil and Schaeffer \cite{GouSch98} derived an expression for the connection coefficients $c^n_{\mu,\nu}$ in the general case as a sum of positive terms (see Biane \cite{Bia04} for a succinct algebraic derivation; and Poulalhon and Schaeffer \cite{PouSch00}, and Irving \cite{Irv06} for further generalisations). 
Closed form formulas of the expansion of the generating series for the $c^{n}_{\lambda,\mu}$ and $b^{n}_{\lambda,\mu}$ and their generalisations in the monomial basis were provided by Morales and Vassilieva and Vassilieva using bijective constructions for hypermaps in \cite{MorVas13}, \cite{Vas17} and \cite{Vas13}. Equivalent results using purely algebraic methods are provided in \cite{Vas15b}.

\subsection{Prior results on the Matchings-Jack conjecture and the Hypermap-Jack conjecture}
\label{sec : MHJC}
While the matchings-Jack and the hypermap-Jack conjecture are still open in the general case, some special cases and weakened forms have been solved over the past decade. In particular, Brown and Jackson in \cite{BroJac07} prove that for any partition $\mu \vdash 2m$, $\sum_{\la}h^\la_{\mu, [2^m]}(\beta +1)$ verifies a weaker form of the hypermaps-Jack conjecture. Later on, in his PhD thesis (\cite{Lac09}), Lacroix defines a {\it measure of non-orientability} $\vartheta$ for hypermaps and focuses on a stronger form of the result of Brown and Jackson. He shows that $$\sum_{\ell(\la)=r}h^\la_{\mu, [2^m]}(\beta +1) = \sum_{M \in \bigcup_{\ell(\la)=r} \mathcal{L}^\la_{\mu, [2^m]}}\beta^{\vartheta(M)}.$$ In particular he proves the hypermap-Jack conjecture for $h^n_{\mu, [2^m]}(\beta +1) = h^\mu_{n, [2^m]}(\beta +1)$. Finally, Dolega in \cite{Dol17} shows that $$h^n_{\mu, \nu}(\beta +1) = \sum_{M \in  \mathcal{L}^n_{\mu, \nu}}\beta^{\vartheta(M)}$$ holds true when either $\beta$ is restricted to the values $\beta \in \{-1,0,1\}$ or $\beta$ is general but $\ell(\mu) + \ell(\nu) \geq n-3$. 

Except the limit cases $\la=[1^n],[2,1^{n-2}]$ already covered by Goulden and Jackson \cite{GouJac96}, the matchings-Jack conjecture has be proved by  Kanunnikov and Vassiliveva  \cite{KanVas16} in the case $\mu = \nu = (n)$. More precisely the authors introduce a weight function $\wt_\la$ for matchings in $\mathcal{G}^\la_{n,n}$ $$a^\la_{n,n}(\beta+1) = \sum_{\delta \in \mathcal{G}^\la_{n,n}}\beta^{\wt_\la(\delta)},$$ besides, $\wt(\de)=0$ iff $\de$ is bipartite.

In \cite{DolFer16} and \cite{DolFer17} Dolega and Feray focus only on the polynomiality part of the conjectures and show that the $a^\la_{\mu, \nu}(\al)$ and $h^\la_{\mu, \nu}(\al)$ are polynomials in $\al$ with {\bf rational} coefficients for arbitrary partitions $\la, \mu, \nu$. See also \cite{Vas15a} for a proof of the polynomiality with non-negative integer coefficients of a multi-indexed variation of $a^\la_{\mu, \nu}(\al)$ in some important special cases.

\section{Proof of Theorem \ref{thm : main} and Corollary \ref{thm : cor1}}

\subsection{Properties of Jack symmetric functions}
In order to prove Theorem \ref{thm : main}, we need to recall some known properties of Jack symmetric functions.\\

\noindent {\bf Pieri formulas}\\
Given two partitions, $\mu \subseteq \la$ the generalised binomial coefficients $\binom{\la}{\mu}$ are defined through the relation
\begin{equation*}
\frac{J^\al_\la(1+x_1, 1+x_2,\ldots)}{J^\al_\la(1,1,\ldots)} = \sum_{\mu \subseteq \la}\binom{\la}{\mu}\frac{J^\al_\mu(x_1,x_2,\ldots)}{J^\al_\mu(1,1,\ldots)}.
\end{equation*}
Details about the existence and properties of these binomial coefficients can be found in \cite{Las89, Las90}. As shown in \cite{OkoOls97} these coefficients are equal to some properly normalised shifted Jack polynomials.\\
For $\rho \vdash n+1$ and integer $1\leq i \leq \ell(\rho)$ define the partition $\rho_{(i)}$ of $n$ (if it exists) obtained by replacing $\rho_i$ in $\rho$ by $\rho_i-1$ and keeping all the other parts as in $\rho$. Similarly for $\gamma \vdash n$ and integer $1\leq i \leq \ell(\gamma)+1$ we define the partition $\gamma^{(i)}$ of $n+1$ (if it exists) obtained by replacing $\gamma_i$ in $\gamma$ by $\gamma_i+1$ and keeping all the other parts as in $\gamma$. Define also the numbers $c_i(\gamma)$ as
\begin{equation*}
\label{eq : cdef} c_i(\gamma) = \alpha\binom{\gamma^{(i)}}{\gamma}\frac{j_\gamma(\al)}{j_{\gamma^{(i)}}(\al)}.
\end{equation*}
In \cite{Las89} Lassalle showed the following Pieri formulas 
\begin{align}
\label{eq : p1ci} p_1J_\gamma^\al &= \sum_{i=1}^{\ell(\gamma)+1}c_i(\gamma)J_{\gamma^{(i)}}^\al,\\
\label{eq : p1p}p_1^\perp J_\rho^\al &= \al\frac{\partial}{\partial p_1}J_\rho^\al = \sum_{i=1}^{\ell(\rho)}\frac{j_\rho(Dol17\al)}{j_{\rho_{(i)}}(\al)}c_i(\rho_{(i)})J_{\rho_{(i)}}^\al.
\end{align}
\noindent {\bf Power sum expansion and Laplace Beltrami operator}\\
For $\la, \mu \vdash n$, denote $\theta^\la_\mu(\al)$ the coefficient of $p_\mu$ in the power sum expansion of $J_\la^\al$. Namely, $$J_\la^\al = \sum_{\mu \vdash n}\theta^\la_\mu(\al)p_\mu.$$ As shown in \cite{Mac87} Jack symmetric functions are eigenfunctions of $D_\al$ and verify
\begin{equation}
\label{eq : eigen}
D_\al J_\la^\al =\theta_{[1^{|\la|-2}2^1]}^\la(\al)J_\la^\al.
\end{equation}
Furthermore, according to \cite[Lemma 2]{KanVas16}, for any partition $\gamma$ of some integer $n$
\begin{align} \label{the}
\theta_{n+1}^{\gamma^{(i)}}(\al) = \theta_{n}^\ga(\al)\left(\theta^{\ga^{(i)}}_{[1^{n-1}2]}(\al)-\theta^{\ga}_{[1^{n-2}2]}(\al)\right).
\end{align}
Finally, for integers $a,b \geq 0$, the following relation holds (\cite[Equation (30)]{KanVas16}):
\begin{align} \label{DapDb}
\sum_{i=1}^{\ell(\ga)+1}c_i(\ga)\left(\theta^{\ga^{(i)}}_{[1^{n-1}2]}\right)^a&\left(\theta^{\ga}_{[1^{n-2}2]}\right)^b
J_{\ga^{(i)}}^\al=D_\al^ap_1D_\al^bJ_\gamma^\al
\end{align}

\noindent {\bf Operators}\\
Following \cite{Las08}, denote also the two conjugate operators $E_2$ and  $E_2^{\perp}$ defined by
\begin{align}
\nonumber \label{eq : e} &E_2 = [D_\al,p_1/\al] = \sum_{i\geq 1}ip_{i+1}\frac{\partial}{\partial p_i},\\
\nonumber &E_2^{\perp} = [p_1^\perp/\al,D_\al] = \sum_{i\geq 1}(i+1)p_i\frac{\partial}{\partial p_{i+1}}.
\end{align}
We show in \cite[Theorem 5]{KanVas16} that for $x$ and $y$ indeterminates  the following relation for Jack symmetric functions holds
\begin{equation}
\label{eq : EDE}
\sum_{\rho \vdash n+1}\frac{\theta^\rho_{n+1}(\al)J_\rho^\al(x)E_2^\perp J_\rho^\al(y)}{j_\rho(\al)} = \sum_{\gamma \vdash n}\frac{\theta^\gamma_{n}(\al)J_\gamma^\al(y)\Delta J_\gamma^\al(x)}{j_\gamma(\al)}.
\end{equation}

\subsection{Proof of Theorem \ref{thm : main}}

The first step is to show the following lemma.  
\begin{lem} Let $x$ and $y$ be two indeterminates. Jack symmetric functions verify
 \begin{equation} \label{lem} 
\sum_{\rho\vdash n+1} \dfrac{\theta^\rho_{n+1}(\al)J^\al_\rho(x)p_1^\perp J^\al_\rho(y)}{j_\rho(\al)}=
\al\sum_{\ga\vdash n} \dfrac{\theta^\ga_n(\al)J_\ga^\al(y)E_2J^\al_\ga(x)}{j_\ga(\al)}.   
 \end{equation}
\end{lem}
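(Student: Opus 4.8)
The plan is to expand both sides of \eqref{lem} in the Jack basis and compare coefficients, relying on the Pieri formulas \eqref{eq : p1ci}, \eqref{eq : p1p} together with the identity \eqref{the} that links the coefficient $\theta^{\ga^{(i)}}_{n+1}$ of the top power sum to $\theta^\ga_n$. First I would rewrite the left-hand side: applying \eqref{eq : p1p} to $p_1^\perp J^\al_\rho(y)$, the sum over $\rho\vdash n+1$ becomes
\[
\sum_{\rho\vdash n+1}\frac{\theta^\rho_{n+1}(\al)}{j_\rho(\al)}J^\al_\rho(x)\sum_{i=1}^{\ell(\rho)}\frac{j_\rho(\al)}{j_{\rho_{(i)}}(\al)}c_i(\rho_{(i)})J^\al_{\rho_{(i)}}(y).
\]
I would then reindex the double sum by setting $\ga=\rho_{(i)}$, so that $\rho$ runs over the partitions $\ga^{(i)}$ of $n+1$; the $j_\rho(\al)$ factors cancel, and what remains is
\[
\sum_{\ga\vdash n}\sum_{i=1}^{\ell(\ga)+1}\frac{\theta^{\ga^{(i)}}_{n+1}(\al)}{j_\ga(\al)}c_i(\ga)J^\al_{\ga^{(i)}}(x)J^\al_\ga(y),
\]
where of course only the $i$ for which $\ga^{(i)}$ and $\rho_{(i)}=\ga$ genuinely exist contribute.

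Next I would substitute \eqref{the}, replacing $\theta^{\ga^{(i)}}_{n+1}(\al)$ by $\theta^\ga_n(\al)\bigl(\theta^{\ga^{(i)}}_{[1^{n-1}2]}(\al)-\theta^\ga_{[1^{n-2}2]}(\al)\bigr)$. This factors the left-hand side as
\[
\sum_{\ga\vdash n}\frac{\theta^\ga_n(\al)}{j_\ga(\al)}J^\al_\ga(y)\sum_{i=1}^{\ell(\ga)+1}c_i(\ga)\bigl(\theta^{\ga^{(i)}}_{[1^{n-1}2]}-\theta^\ga_{[1^{n-2}2]}\bigr)J^\al_{\ga^{(i)}}(x).
\]
Now I invoke \eqref{DapDb} with the pairs $(a,b)=(1,0)$ and $(a,b)=(0,1)$: the inner sum equals $D_\al p_1 J^\al_\ga(x)-p_1 D_\al J^\al_\ga(x)=[D_\al,p_1]J^\al_\ga(x)=\al\,[D_\al,p_1/\al]J^\al_\ga(x)=\al\,E_2 J^\al_\ga(x)$, using the definition of $E_2$ from the excerpt. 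Plugging this back reproduces exactly the right-hand side of \eqref{lem}, completing the proof.

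The main obstacle, and the place requiring care, is the reindexing step: one must check that the correspondence between pairs $(\rho,i)$ with $\rho_{(i)}$ defined and pairs $(\ga,i)$ with $\ga^{(i)}$ defined is a bijection that preserves the summand, including the boundary cases where adding or removing a box is forbidden (when $\rho_i-1$ would violate the partition ordering, or when $\ga_i+1$ would), so that the ranges of $i$ truly match up and no spurious terms are introduced. A secondary subtlety is that \eqref{the} as stated presupposes $\ga^{(i)}$ exists; for indices $i$ where it does not, $c_i(\ga)=0$ by the convention implicit in \eqref{eq : p1ci}, so those terms drop out harmlessly, but this should be noted. Everything else — the cancellation of $j_\rho(\al)$, the telescoping into a commutator, and the identification with $\al E_2$ — is routine given the quoted identities.
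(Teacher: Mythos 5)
Your proposal is correct and follows exactly the same route as the paper's proof: apply the Pieri formula for $p_1^\perp$, reindex the double sum via $\ga=\rho_{(i)}$, substitute identity (\ref{the}), and then recognise the inner sum as $(D_\al p_1-p_1D_\al)J^\al_\ga(x)=\al E_2J^\al_\ga(x)$ via (\ref{DapDb}) with $(a,b)=(1,0)$ and $(0,1)$. Your additional remarks on the bijectivity of the reindexing and the vanishing of $c_i(\ga)$ in degenerate cases are sensible points of care that the paper leaves implicit.
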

\begin{proof} Start with the second Pieri formula and then apply the known identities above. For brevity, we omit parameter $\al$ in Jack symmetric functions and their coefficients in the power sum basis.
 \begin{align*} \sum_{\rho\vdash n+1} \dfrac{\theta^\rho_{n+1}J_\rho(x)p_1^\perp J_\rho(y)}{j_\rho}&\stackrel{(\ref{eq : p1p})}{=}
\sum_{\rho\vdash n+1} \sum_{i=1}^{\ell(\rho)} \dfrac{\theta^\rho_{n+1}J_\rho(x)c_i(\rho_{(i)}) J_{\rho_{(i)}}(y)}{j_{\rho_{(i)}}}, \\
&\stackrel{\phantom{(\ref{the})}}{=}\sum_{\ga\vdash n} \sum_{i=1}^{\ell(\ga)+1} \dfrac{\theta^{\ga^{(i)}}_{n+1}J_{\ga^{(i)}}(x)c_i(\ga) J_\ga(y)}{j_{\ga}}, \\
&\stackrel{(\ref{the})}{=} \sum_{\ga\vdash n} \sum_{i=1}^{\ell(\ga)+1} \dfrac{c_i(\ga)\left(\theta^{\ga^{(i)}}_{[1^{n-1}2]}-\theta^\ga_{[1^{n-2}2]}\right)\theta^\ga_nJ_{\ga^{(i)}}(x)J_\ga(y)}{j_\ga}, \\
&\stackrel{(\ref{DapDb})}{=}\sum_{\ga\vdash n} \dfrac{ \theta^\ga_n J_\ga(y)(D_\al p_1-p_1D_\al)J_\ga(x) }{j_\ga},\\
&\stackrel{\phantom{(\ref{the})}}=\al\sum_{\ga\vdash n} \dfrac{\theta^\ga_nJ_\ga(y)E_2J_\ga(x)}{j_\ga}.\qedhere
 \end{align*}
\end{proof}

The key element of the proof of Theorem \ref{thm : main} is the following result.
\begin{thm}\label{thm : DPi}
\noindent For any integer $k \geq 1$ denote $\Pi_k$ the operator defined by:
\begin{align*}
\Pi_1 = \frac{1}{\al}p_1^{\perp}=\frac{\partial}{\partial p_1}, \;\;\; \Pi_{k+1} = [\Pi_k,E_2^{\perp}].
\end{align*}
Given two indeterminates $x$ and $y$, the following identity holds:
\begin{equation} \label{eq : DPi}
\sum_{\rho\vdash n+k} \dfrac{\theta^\rho_{n+k}(\al)J^\al_\rho(x)\Pi_k J^\al_\rho(y)}{j_\rho(\al)}=
\sum_{\ga\vdash n} \dfrac{\theta^\ga_n(\al)J_\ga^\al(y)\Omega_kJ^\al_\ga(x)}{j_\ga(\al)}.   
 \end{equation}
\end{thm}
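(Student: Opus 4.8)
The plan is to prove \eqref{eq : DPi} by induction on $k$. The base case $k=1$ is exactly the Lemma above: since $\Pi_1 = p_1^\perp/\al$ and $\Omega_1 = E_2 = [D_\al,p_1/\al]$, identity \eqref{lem} rewritten (with $x$ and $y$ interchanged and dividing by $\al$) is precisely \eqref{eq : DPi} for $k=1$. So the work is entirely in the inductive step.

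For the inductive step, suppose \eqref{eq : DPi} holds for some $k\geq 1$, with $n$ arbitrary. I would like to apply the operator $E_2^\perp$ (acting on the $y$-variables) to both sides and feed in the "shift" identity \eqref{eq : EDE}, which is exactly the statement that conjugating by $E_2^\perp$ on one side corresponds to applying $\Delta$ on the other — this is what produces the commutator $\Omega_{k+1} = [\Delta,\Omega_k]$ on the right-hand side and the commutator $\Pi_{k+1} = [\Pi_k, E_2^\perp]$ on the left-hand side. Concretely, the summand on the left of \eqref{eq : DPi} at level $n+k$ is the "kernel" $\sum_\rho \theta^\rho_{n+k} J_\rho(x)\otimes J_\rho(y)/j_\rho$ with the operator $\Pi_k$ applied in $y$; applying $E_2^\perp$ in $y$ and using \eqref{eq : EDE} (which says this same bilinear kernel, summed at level $n+k+1$ against $E_2^\perp$ in $y$, equals the level-$(n+k)$ kernel against $\Delta$ in $x$) should let me rewrite $\sum_{\rho\vdash n+k+1}\theta^\rho_{n+k+1} J_\rho(x)\, (\Pi_k E_2^\perp - E_2^\perp \Pi_k) J_\rho(y)/j_\rho(\al)$ in terms of the level-$(n+k)$ kernel with $\Omega_k$ applied in $x$ and then $\Delta$; combining with the inductive hypothesis yields $\Delta\Omega_k - \Omega_k\Delta = \Omega_{k+1}$ on the right. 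The bookkeeping here — matching the direction of the bilinear form, keeping track of which variable each operator acts on, and verifying that $E_2^\perp$ is indeed the correct adjoint of $E_2$ under $\langle\cdot,\cdot\rangle_\alpha$ so that moving it across the kernel is legitimate — is the routine but delicate part.

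The main obstacle I anticipate is the correct manipulation of the "coupling" identity \eqref{eq : EDE}: it is stated for a specific bilinear kernel $\sum \theta^\rho_{n+1} J_\rho(x)E_2^\perp J_\rho(y)/j_\rho$, and to use it inside the induction I need to know that applying an arbitrary operator in the $x$-variable (here $\Pi_k$, or rather its adjoint behavior) commutes past the transformation in a controlled way — equivalently, I need a version of \eqref{eq : EDE} where $E_2^\perp$ in $y$ is replaced by $\Pi_k E_2^\perp$ or where an extra operator sits on the $x$-side. The cleanest route is probably to treat the kernel $\sum_\rho \theta^\rho_{|\rho|} J_\rho(x)\otimes J_\rho(y)/j_\rho(\al)$ as a reproducing object: because $E_2$ and $E_2^\perp$ are adjoint, and $D_\al$ is self-adjoint with eigenvalues $\theta^\rho_{[1^{|\rho|-2}2]}$, the identity \eqref{eq : EDE} is the single degree-raising instance of a general "transfer" principle, and I would either invoke it repeatedly or, more carefully, re-derive the needed mixed identity directly from the Pieri formulas \eqref{eq : p1ci}--\eqref{eq : p1p} and \eqref{the}--\eqref{DapDb}, exactly as the Lemma above was derived — applying $p_1^\perp$ repeatedly and using \eqref{DapDb} to convert products of eigenvalues into strings of $D_\al$'s and $p_1$'s, then recognizing the resulting operator as an iterated commutator.

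Once \eqref{eq : DPi} is established, I would record it as the key lemma feeding the proof of Theorem~\ref{thm : main}: specializing appropriately (taking $n\to$ a one-part situation, or extracting the $\theta^\rho_{|\rho|}$-normalized part and iterating the operators $\Omega_{\nu_i}$ in the order dictated by the parts of $\nu$) will turn the left-hand "spectral" sum into the combinatorially meaningful expansion $\sum_\la z_\la^{-1}\al^{-\ell(\la)} a^\la_{n,\nu}(\al) p_\la$, with the $\prod \nu_i!$ and $Aut_\nu$ factors emerging from the symmetrization inherent in iterating $\Delta$ (for the part $\nu_1$) and the $\Omega_{\nu_i}$ (for the remaining parts). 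That last reduction is essentially formal given \eqref{eq : DPi}, \eqref{eq : ha}, and the definition of the coefficients $a^\la_{\mu,\nu}(\al)$ via $\Phi$.
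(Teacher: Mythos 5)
Your proposal follows essentially the same route as the paper: induction on $k$ with the Lemma as base case, and an inductive step that expands $\Pi_{k+1}=[\Pi_k,E_2^\perp]$, applies Equation~(\ref{eq : EDE}) and the inductive hypothesis (each twice, at levels shifted by one), and uses the fact that operators acting on the $x$- and $y$-variables commute to assemble $[\Delta,\Omega_k]=\Omega_{k+1}$. The obstacle you anticipate is a non-issue and is resolved exactly by your first suggestion: since $\Pi_k$ and $E_2^\perp$ act only on the $y$-variables, they can simply be applied to both sides of (\ref{eq : EDE}) and pulled through the sums, with no adjointness argument or ``mixed'' version of (\ref{eq : EDE}) required.
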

\begin{proof}
In the case $k=1$ Theorem \ref{thm : DPi} reduces to Equation (\ref{lem}). Assume the property is true for some $k\geq 1$. We have (reference to parameter $\al$ is also removed)
\begin{align*} 
&\sum_{\rho\vdash n+k+1} \dfrac{\theta^\rho_{n+k+1}J_\rho(x)\Pi_{k+1} J_\rho(y)}{j_\rho}\\ 
&= \sum_{\rho\vdash n+k+1} \dfrac{\theta^\rho_{n+k+1}J_\rho(x)[\Pi_k,E_2^{\perp}]J_\rho(y)}{j_\rho}\\
&= \Pi_k \sum_{\rho\vdash n+k+1} \dfrac{\theta^\rho_{n+k+1}J_\rho(x)E_2^{\perp}J_\rho(y)}{j_\rho}
-E_2^{\perp}\sum_{\rho\vdash n+k+1} \dfrac{\theta^\rho_{n+k+1}J_\rho(x)\Pi_kJ_\rho(y)}{j_\rho}\\
&= \Pi_k \sum_{\rho\vdash n+k} \dfrac{\theta^\rho_{n+k}\Delta J_\rho(x)J_\rho(y)}{j_\rho}
-E_2^{\perp}\sum_{\ga\vdash n+1} \dfrac{\theta^\ga_{n+1}J_\ga(y)\Omega_kJ_\ga(x)}{j_\ga}\\
&= \Delta \sum_{\rho\vdash n+k} \dfrac{\theta^\rho_{n+k}J_\rho(x)\Pi_k J_\rho(y)}{j_\rho}
-\Omega_k\sum_{\ga\vdash n+1} \dfrac{\theta^\ga_{n+1}E_2^{\perp}J_\ga(y)J_\ga(x)}{j_\ga}\\
&= \Delta \sum_{\ga\vdash n} \dfrac{\theta^\ga_{n}\Omega_kJ_\ga(x)J_\ga(y)}{j_\ga}
-\Omega_k\sum_{\ga\vdash n} \dfrac{\theta^\ga_{n}J_\ga(y)\Delta J_\ga(x)}{j_\ga}\\
&= \sum_{\ga\vdash n} \dfrac{\theta^\ga_{n}\Delta \Omega_kJ_\ga(x)J_\ga(y)}{j_\ga}
-\sum_{\ga\vdash n} \dfrac{\theta^\ga_{n}J_\ga(y)\Omega_k\Delta J_\ga(x)}{j_\ga}\\
&= \sum_{\ga\vdash n} \dfrac{\theta^\ga_{n}[\Delta ,\Omega_k]J_\ga(x)J_\ga(y)}{j_\ga}.
 \end{align*}
 Where the fourth and the sixth line are both obtained by applying Equation (\ref{eq : EDE}) and the recurrence hypothesis. As a result the property is true for $k+1$.
\end{proof}
\noindent We end the proof of Theorem \ref{thm : main} by noticing that
\begin{align*}
\Pi_{k} = k!\frac{\partial}{\partial p_{k}}.
\end{align*}
For an arbitrary integer partition $\nu  = (\nu_1, \ldots, \nu_p)$ of $n$, rewrite Equation (\ref{eq : DPi}) with $n$ instead of $n+k$ and $\nu_p$ instead of $k$ and extract the coefficient in $p_{\nu\sm \nu_p}(y)$: 
\begin{align} \label{eq}
m_{\nu_p}(\nu) \sum_{\la \vdash n}z_\la^{-1}\al^{-\ell(\la)}a^{\la}_{n,\nu}p_\la(x) = \Omega_{\nu_p}\sum_{\rho \vdash n-\nu_p}\frac{z_\rho^{-1}\al^{-\ell(\rho)}}{\nu_p!}a^{\rho}_{n-\nu_p,\nu\setminus\nu_p}p_\rho(x).
\end{align}
Iterating the equation above for $\nu_2, \ldots, \nu_{p-1}$ and, then, applying $\nu_1-1$ times Equation (\ref{eq : EDE}) yields the desired formula. 
\subsection{Proof of Corollary \ref{thm : cor1}}
As shown e.g. by Stanley in \cite{Sta89}, the Laplace Beltrami operator can be expressed in terms of the power sum symmetric functions as:
\begin{equation*}
D_\al = \frac{(\al-1)}{2}\sum_{i}i(i-1)p_i\frac{\partial}{\partial p_i} +  \frac{\al}{2}\sum_{i,j}ijp_{i+j}\frac{\partial}{\partial p_i}\frac{\partial}{\partial p_j} + \frac{1}{2}\sum_{i,j}(i+j)p_ip_j\frac{\partial}{\partial p_{i+j}}.
\end{equation*}
Besides, $$[D_\al,p_1/\al] = E_2 = \sum_{i\geq 1} ip_{i+1}\frac{\partial}{\partial p_i}$$
As a result it is clear from the definition of operators $\{\Omega_k\}_k$ and $\Delta$ that, for any integer partition $\nu$,  the coefficients in the power sum expansion of $\left(\prod_{i\geq 2}\Omega_{\nu_i}\right)\Delta^{\nu_1-1}(p_1)$ are polynomial in $\al$ with (possibly negative) integer coefficients. Denote for any $\la, \nu \vdash n$ the integers $\{g^i_{\la,\nu}\}_{i\geq 0}$ such that
\begin{align*}
Aut_\nu |C_\la|\al^{-\ell(\la)}a^{\la}_{n,\nu}(\al) = \frac{1}{\al}\binom{n}{\nu}[p_\la]\left(\prod_{i\geq 2}\Omega_{\nu_i}\right)\Delta^{\nu_1-1}(p_1) = \frac{1}{\al}\sum_{i\geq 0}g^i_{\la,\nu}\al^i
\end{align*}
But, according to \cite[Thm. 5]{Vas15a},
\begin{align*}
\al^{\ell(\la)}a^\la_{n,\nu}(\al^{-1}) = (-\al)^{-n+1+\ell(\la)+\ell(\nu)}\al^{-\ell(\la)}a^\la_{n,\nu}(\al).
\end{align*}
Replacing the coefficients $a^\la_{n,\nu}(\al)$ and $a^\la_{n,\nu}(\al^{-1})$ by their expressions in terms of $\{g^i_{\la,\nu}\}_{i\geq 0}$ yields
\begin{align*}
\al\sum_{i\geq 0}g^i_{\la,\nu}\al^{-i} &= (-\al)^{-n+1+\ell(\la)+\ell(\nu)}\frac{1}{\al}\sum_{i\geq 0}g^i_{\la,\nu}\al^i\\
\sum_{i\geq 0}g^i_{\la,\nu}\al^{n+1-\ell(\la)-\ell(\nu)-i} &=(-1)^{-n+1+\ell(\la)+\ell(\nu)}\sum_{i\geq 0}g^i_{\la,\nu}\al^i
\end{align*}
Equating the coefficients in $\al^i$ in the equation above shows that $i> n+1 -\ell(\la)-\ell(\nu)$ implies that $g^i_{\la, \nu} = 0$. As a consequence, $Aut_\nu |C_\la|\al^{1-\ell(\la)}a^{\la}_{n,\nu}(\al)$ is a polynomial in $\al$ with integer coefficients of degree at most $n+1 -\ell(\la)-\ell(\nu)$ and, finally, $Aut_\nu |C_\la|a^{\la}_{n,\nu}(\al)$ is a polynomial in $\al$ with integer coefficients of degree at most $n-\ell(\nu)$.\\
Using this result together with  Equation (\ref{eq : ha}) shows that $Aut_\nu(n-1)!h^{\la}_{n,\nu}(\al)$ is a polynomial in $\al$ with integer coefficients of degree at most $n+1-\ell(\nu)-\ell(\la)$.

\section{Proof of Theorems \ref{thm : A} and  \ref{thm : H}}
While Theorem \ref{thm : main} allows us to demonstrate most of the polynomial properties of  $a^\la_{n,\nu}(\al)$ and $h^\la_{n,\nu}(\al)$, it is not enough to prove the matchings-Jack and the hypermap-Jack conjectures. In particular, it is not clear from the definition of operators $\{\Omega_k\}_k$ that the coefficients of the expansion of  $a^\la_{n,\nu}(\al)$ and $h^\la_{n,\nu}(\al)$ in $\beta = \al -1$ are non-negative. We overcome this issue by computing a more explicit form of operator $\Omega_k$ in the cases $k=1,2$ and $3$ to show some recurrence formulas for $\wit{a}^\la_{n,\nu}(\al)$ (for $\nu$ with at most one part strictly greater than $3$) and $\widetilde{h}_{n,[k^m]}^\la(1+\beta)$ ($k\in\{1,2,3,n\}$). 
Then we use bijective constructions for labelled matchings and hypermaps to show that the right-hand sides of the main equations of theorems \ref{thm : A} and \ref{thm : H} fulfil the same recurrence relation. This method can be extended to higher values of $k$ but computations and bijections become cumbersome and do not shed much more light on the problem.

\subsection{Recurrence relations for the coefficients $\wit{a}_{n,\nu}^\la$ and $\wit{h}^\la_{n,[k^m]}$}

 \subsubsection{Explicit computation of operator $\Omega_k$}
We begin with the following lemma.

\begin{lem}\label{thm : lemOm} For $k\geq 1$, let $\Omega_k$ be the operator defined in Theorem \ref{thm : main}. For small values of $k$, the explicit form of $\Omega_k$ is given by

\begin{align}
\label{eq : om1} \Omega_1=&\sum_{i\geq 1} ip_{i+1}\frac{\partial}{\partial p_i},\\
\nonumber\Omega_2 =& (\al-1)\sum_{i}(i-1)(i-2)p_i\frac{\partial}{\partial p_{i-2}}\\
\label{eq : om2}&+\sum_{i,j}(i+j-2)p_ip_j\frac{\partial}{\partial p_{i+j-2}}+\al\sum_{i,j}ijp_{i+j+2}\frac{\partial}{\partial p_{i}}\frac{\partial}{\partial p_{j}},
\end{align}
\begin{align}
\nonumber\Omega_3 =& (2(\al-1)^2+\al)\sum_{i}(i-1)(i-2)(i-3)p_i\frac{\partial}{\partial p_{i-3}}\\
\nonumber&+3(\al-1)\sum_{i,j}(i+j-2)(i+j-3)p_ip_j\frac{\partial}{\partial p_{i+j-3}}\\
\nonumber&+3\alpha(\al-1)\sum_{i,j}ij(i+j+2)p_{i+j+3}\frac{\partial}{\partial p_{i}}\frac{\partial}{\partial p_{j}}\\
\nonumber&+3\alpha\sum_{i,j,k}ijp_{i+j-k+3}p_k\frac{\partial}{\partial p_{i}}\frac{\partial}{\partial p_{j}}\\
\nonumber&+2\alpha^2\sum_{i,j,k}ijkp_{i+j+k+3}\frac{\partial}{\partial p_{i}}\frac{\partial}{\partial p_{j}}\frac{\partial}{\partial p_{k}}\\
\label{eq : om3}&+2\sum_{i,j,k}(i+j+k-3)p_ip_jp_k\frac{\partial}{\partial p_{i+j+k-3}}.
\end{align}
\end{lem}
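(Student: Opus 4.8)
The plan is to prove Lemma \ref{thm : lemOm} by induction on $k$, computing $\Omega_1$, $\Omega_2$, $\Omega_3$ successively from the definitions $\Omega_1 = [D_\al, p_1/\al]$ and $\Omega_{k+1} = [\Delta, \Omega_k]$, where $\Delta = [D_\al, \Omega_1]$. Everything reduces to commutator bookkeeping among the ``elementary'' operators appearing in the power-sum form of $D_\al$ recalled in the proof of Corollary \ref{thm : cor1}, namely $M = \sum_i i(i-1)p_i\partial_{p_i}$, $N = \sum_{i,j}ijp_{i+j}\partial_{p_i}\partial_{p_j}$, $P = \sum_{i,j}(i+j)p_ip_j\partial_{p_{i+j}}$, so that $D_\al = \tfrac{\al-1}{2}M + \tfrac{\al}{2}N + \tfrac12 P$. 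The case $k=1$ is immediate: $\Omega_1 = [D_\al, p_1/\al] = E_2 = \sum_{i\geq 1}ip_{i+1}\partial_{p_i}$, which is exactly \eqref{eq : om1}; this is already quoted in the excerpt.

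Next I would compute $\Delta = [D_\al, \Omega_1] = [D_\al, E_2]$. Writing $E_2 = \sum_i i p_{i+1}\partial_{p_i}$, one commutes $E_2$ against each of the three pieces $M, N, P$ of $D_\al$ in turn. The commutator $[M, E_2]$ is again a ``first-order'' operator of the shape $\sum_i (\text{quadratic in }i)\,p_{i+1}\partial_{p_i}$; $[N, E_2]$ produces a term of shape $\sum_{i,j}(\ldots)p_{i+j+1}\partial_{p_i}\partial_{p_j}$ together with lower terms; $[P, E_2]$ produces a term $\sum_{i,j}(\ldots)p_ip_j\partial_{p_{i+j-1}}$. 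Collecting with the coefficients $\tfrac{\al-1}{2},\tfrac{\al}{2},\tfrac12$ gives an explicit $\Delta$ of the form $(\al-1)\sum_i \binom{i}{2}\text{-type}\, p_i\partial_{p_{i-1}} + \al\sum_{i,j}ij\,p_{i+j+1}\partial_{p_i}\partial_{p_j} + \sum_{i,j}(i+j-1)p_ip_j\partial_{p_{i+j-1}}$ (up to the exact numeric coefficients, which I would pin down by the calculation). Once $\Delta$ is in hand, $\Omega_2 = [\Delta, \Omega_1] = [\Delta, E_2]$ is another round of the same three types of commutator, now between $\Delta$ (three pieces) and $E_2$; carefully summing the resulting six contributions, combining like monomials, and tracking the $(\al-1)$ versus $\al$ prefactors yields \eqref{eq : om2}. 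Then $\Omega_3 = [\Delta, \Omega_2]$ is one more iteration: $\Omega_2$ has three pieces, $\Delta$ has three pieces, so one must evaluate nine commutators, each of one of the standard shapes (raising/lowering a part index by a fixed amount, or merging/splitting two part indices), and reassemble them into the six summands of \eqref{eq : om3}.

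The key combinatorial identities making all of this work are the elementary commutation relations $[p_a\partial_{p_b}, p_c\partial_{p_d}] = \delta_{b,c}p_a\partial_{p_d} - \delta_{d,a}p_c\partial_{p_b}$ and their obvious extensions to the second- and third-order monomials; applying these termwise and then re-indexing the summation variables (shifting $i\mapsto i+c$ etc.\ so that the surviving $\delta$'s are resolved) is the entire engine. I would organize the proof as: (i) state these bracket rules for monomials in $p$'s and $\partial_p$'s; (ii) derive $\Delta$ explicitly; (iii) derive $\Omega_2$; (iv) derive $\Omega_3$; and at each stage verify degree/weight homogeneity (each $\Omega_k$ lowers total degree in the $p_i$'s by $1$ and lowers the ``weight'' $\sum i\cdot(\text{degree in }p_i)$ by $k$) as a consistency check on the bookkeeping.

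The main obstacle is purely the size and error-proneness of the $\Omega_3$ computation: nine nested commutators, each expanding into several terms, with coefficients $2(\al-1)^2+\al$, $3(\al-1)$, $3\al(\al-1)$, $3\al$, $2\al^2$, $2$ that only emerge after delicate cancellation and recombination of the $(\al-1)$- and $\al$-weighted pieces coming from $D_\al$. There is no conceptual difficulty, but it is essential to fix a consistent convention for the summation ranges (in particular, terms with a vanishing factor such as $i=1,2,3$ in $\sum_i(i-1)(i-2)(i-3)p_i\partial_{p_{i-3}}$ are harmless since the coefficient kills them, so one need not worry about $\partial_{p_0}$), and to re-index each partial result into a common normal form before summing. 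I would carry out the computation in a fixed normal form (all creation operators $p_a$ to the left of all annihilation operators $\partial_{p_b}$) and double-check the final expressions by evaluating both sides on a few low-degree power sums such as $p_1, p_2, p_1^2, p_3, p_1p_2, p_1^3$.
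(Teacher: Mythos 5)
Your plan is correct and is essentially the paper's own proof: the appendix computes $\Omega_2=[\Delta,\Omega_1]$ by exactly this termwise commutator expansion of the three monomial pieces of $\Delta$ against the summands $\zeta_k=kp_{k+1}\frac{\partial}{\partial p_k}$ of $\Omega_1$, re-indexing and summing, and declares the $\Omega_3$ computation analogous but too cumbersome to print. One small correction to your proposed sanity check: each $\Omega_k$ \emph{raises} the weight $\sum_i i\cdot\deg_{p_i}$ by $k$ (it must, since it maps degree-$(n-k)$ symmetric functions to degree-$n$ ones) and is not homogeneous in the number of $p$-factors — the three summands of $\Omega_2$ change it by $0$, $+1$ and $-1$ respectively — but this affects only the consistency check, not the argument.
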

\begin{proof} The proof of Lemma \ref{thm : lemOm} uses elementary computations on operators. The details are postponed to Appendix \ref{sec : appendix}. 
\end{proof}

We proceed with the recurrence relations that we use throughout this section. To state them some additional notation is required. Namely, for integers $i_1,\cdots,i_k$, denote $m_{i_1,\cdots,i_k}(\la)$ the number of ways to chose in $\la$ first a part equal to $i_1$, then a part equal to $i_2$, etc. We have  $$m_{i_1,\cdots,i_k}(\la) = m_{i_1}(\la)(m_{i_2}(\la) - \delta_{i_1,i_2})\cdots(m_{i_k}(\la)-\sum_{j=1}^{k-1}\delta_{i_k,i_j})$$ where $\delta_{a,b}=1$ if $a=b$, $0$ otherwise.

\subsubsection{Reduction of  $(n)$-parts} \label{sub:n}

Denote for integers $i$, $j$ and partition $\la$
\begin{align*} 
 &\la_{\downarrow(i)\phantom{,j}}= \la\setminus \{i\} \cup \{i-1\},\\
&\la_{\downarrow(i,j)}= \la\setminus \{i,j\}\cup\{i+j-1\},\\
 &\la^{\uparrow(i,j)}= \la \setminus \{i+j+1\} \cup \{i,j\}.
\end{align*}

Using Theorem 1 in \cite{KanVas16} in the case $\nu = (n)$ one gets the following formula for the coefficients $a_{n, n}^\la(\al)$. 
\begin{align} 
\nonumber na_{n,n}^\la(\al) = \sum_i \la_i &\left[ (\al-1)(\la_i-1)a_{n-1,n-1}^{\la_{\downarrow(\la_i)}}(\al)\right. \\
&+\sum_{d=1}^{\la_i-2}a_{n-1,n-1}^{\la^{\uparrow(\la_i-1-d,d)}}(\al)+\al\sum_{j\neq i}\la_j\left.a_{n-1,n-1}^{\la_{\downarrow(\la_i,\la_j)}}(\al)\right].
\label{eq : ann}
\end{align}

As a consequence, we have the following recurrence for the coefficients $\widetilde{h}_{n, n}^\la(\al)$. 

\begin{lem}\label{lem : hnn} For $\la$ integer partition of $n \geq 2$, the numbers $\widetilde{h}_{n,n}^\la(\al)$ verify
\begin{align*} 
\nonumber \widetilde{h}_{n,n}^\la(\al) = \sum_{i\geq 1} &\left[ (\al-1)(i-1)^2m_{i-1}(\la_{\downarrow(i)})\widetilde{h}_{n-1,n-1}^{\la_{\downarrow(i)}}(\al)\right. \\
\nonumber&+\al\sum_{i,d\geq 1}(i-1-d)dm_{i-1-d,d}(\la^{\uparrow(i-1-d,d)})\widetilde{h}_{n-1,n-1}^{\la^{\uparrow(i-1-d,d)}}(\al)\\
&+\sum_{i,j\geq 1}(i+j-1)m_{i+j-1}(\la_{\downarrow(i,j)})\left.\widetilde{h}_{n-1,n-1}^{\la_{\downarrow(i,j)}}(\al)\right].
\end{align*} 
\end{lem}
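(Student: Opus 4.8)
The plan is to derive Lemma~\ref{lem : hnn} directly from the connection-coefficient recurrence~(\ref{eq : ann}) by translating every occurrence of a coefficient $a_{n-1,n-1}^{\rho}(\al)$ into the normalised quantity $\widetilde{h}_{n-1,n-1}^{\rho}(\al)$ via Equation~(\ref{eq : ha}) and the definition $\widetilde{h}_{n,n}^\la(\al)=\frac{(n-1)!n!}{n}h_{n,n}^\la(\al)=(n-1)!^2\,h_{n,n}^\la(\al)$ (here $\nu=(n)$, so $m=1$, $k=n$). Recall that~(\ref{eq : ha}) gives $h^{\la}_{n,n}(\al)=\al n z_\la^{-1}\al^{-\ell(\la)}a^{\la}_{n,n}(\al)$, and since $z_\la=|S_n|/|C_\la|$ we have $z_\la^{-1}=|C_\la|/n!$. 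The main point is purely bookkeeping: express $a^{\la}_{n,n}(\al)$ in~(\ref{eq : ann}) in terms of $h^{\la}_{n,n}(\al)$, do the same for each $a^{\rho}_{n-1,n-1}(\al)$ on the right side, and track how the ratios $z_\la/z_\rho$ and $\al^{\ell(\la)-\ell(\rho)}$ and the factorials combine with the explicit combinatorial prefactors $\la_i$, $\la_i-1$, $\la_j$ appearing in~(\ref{eq : ann}).

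Concretely, I would organise the computation term by term according to the three operations $\la\mapsto\la_{\downarrow(i)}$, $\la\mapsto\la^{\uparrow(i-1-d,d)}$, $\la\mapsto\la_{\downarrow(i,j)}$. For the first type, replacing $\la_i$ by $\la_i-1$ changes $\ell$ by $0$, so $\al^{-\ell}$ is unchanged; the ratio $z_{\la_{\downarrow(i)}}/z_\la$ is elementary in terms of the multiplicities $m_{i}(\la)$ and $m_{i-1}(\la_{\downarrow(i)})$, and summing over the $\la_i=i$ (there are $m_i(\la)$ of them, each contributing the factor $i$) produces exactly the factor $(i-1)^2 m_{i-1}(\la_{\downarrow(i)})$ after one checks the identity $\la_i\cdot(\la_i-1)\cdot(\text{ratio of }z)\cdot(\text{ratio of factorials}) = (i-1)^2 m_{i-1}(\la_{\downarrow(i)})\cdot(\text{normalisation of }\widetilde h)$. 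For the splitting operation $\la^{\uparrow(i-1-d,d)}$ the length increases by $1$, which is where the extra factor $\al$ comes from (matching the $\al$ in front of the corresponding sum in the lemma, noting that the $(\al-1)$ is absorbed and an $\al$ survives); one also has to be careful with the index shift $\la_i\to i$ so that the summand $\sum_{d=1}^{\la_i-2}$ over $\la_i=i$ turns into $\sum_{i,d\geq 1}(i-1-d)dm_{i-1-d,d}(\cdots)$, and in particular the combinatorial weight $m_{i-1-d,d}$ correctly counts ordered pairs (with the $\delta_{i-1-d,d}$ correction handled by the definition of $m_{i_1,\dots,i_k}$). For the merging operation $\la_{\downarrow(i,j)}$ the length decreases by $1$, contributing $\al^{-1}$ which cancels the $\al$ from~(\ref{eq : ha}) applied at level $n$, leaving no $\al$ prefactor as in the third line of the lemma; the sum $\sum_{j\neq i}\la_j$ over ordered pairs of distinct parts becomes $\sum_{i,j\geq 1}(i+j-1)m_{i+j-1}(\la_{\downarrow(i,j)})$ once the weights are rewritten via multiplicities.

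I expect the only real obstacle to be the combinatorial normalisation: verifying that in each of the three cases the product of (a) the explicit prefactor in~(\ref{eq : ann}), (b) the ratio $z_\la^{-1}/z_\rho^{-1}$, (c) the ratio $\al^{-\ell(\la)}/\al^{-\ell(\rho)}$, (d) the ratio of the $(n-1)!^2$-type normalisation factors between level $n$ and level $n-1$, and (e) the $\al n/(\al(n-1))$ factor from~(\ref{eq : ha}), collapses exactly to the stated multiplicity weight $m_{\cdots}(\rho)$ times the clean numerical coefficient. This is the kind of identity where one sign error or a misplaced factorial derails everything, so I would verify it on a small example (say $n=3$, $\la=(2,1)$ or $\la=(3)$) before committing to the general algebra. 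Once the three term types check out, the lemma follows immediately by linearity, since~(\ref{eq : ann}) is just the sum of these three families of terms and the normalisation is applied uniformly.
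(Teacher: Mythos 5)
Your strategy is exactly the paper's: substitute Equation~(\ref{eq : ha}) into the recurrence~(\ref{eq : ann}), regroup the sums over parts into sums over part-sizes weighted by the multiplicities $m_{\cdots}$ (tracking the ratios $z_\la/z_\rho$ and $\al^{\ell(\la)-\ell(\rho)}$ exactly as you describe, which is where the $(\al-1)$, $\al$, and $1$ prefactors of the three term types come from), and then rescale by a factorial to pass from $h$ to $\widetilde{h}$. The paper does this by first obtaining $(n-1)h^{\la}_{n,n}(\al)=\sum[\cdots]h^{\rho}_{n-1,n-1}(\al)$ and then multiplying both sides by $(n-2)!$.

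There is, however, one concrete slip that would derail the final step as you have set it up: for $\nu=(n)$ one has $m=1$ and $k=n$, so $\widetilde{h}^{\la}_{n,n}(\al)=\frac{m!\,k!^{m}}{n}h^{\la}_{n,n}(\al)=\frac{1!\cdot n!}{n}h^{\la}_{n,n}(\al)=(n-1)!\,h^{\la}_{n,n}(\al)$, not $(n-1)!^{2}\,h^{\la}_{n,n}(\al)$ as you wrote. The recurrence closes precisely because the ratio of normalisations between levels, $(n-1)!/(n-2)!=n-1$, matches the factor $n-1$ sitting on the left of the intermediate identity; with your normalisation the ratio would be $(n-1)^{2}$ and the stated lemma would acquire a spurious factor of $n-1$ on the right-hand side. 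Once that constant is corrected, the rest of your bookkeeping plan goes through and reproduces the paper's proof.
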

\begin{proof}

Using Equation~(\ref{eq : ha}) one can rewrite Equation~(\ref{eq : ann}) in terms of the $h^{\la}_{n,n}$ as
\begin{align} 
\nonumber (n-1)h_{n,n}^\la(\al) = \sum_{i\geq 1} &\left[ (\al-1)(i-1)^2m_{i-1}(\la_{\downarrow(i)})h_{n-1,n-1}^{\la_{\downarrow(i)}}(\al)\right. \\
\nonumber&+\al\sum_{i,d\geq 1}(i-1-d)dm_{i-1-d,d}(\la^{\uparrow(i-1-d,d)})h_{n-1,n-1}^{\la^{\uparrow(i-1-d,d)}}(\al)\\
\nonumber&+\sum_{i,j\geq 1}(i+j-1)m_{i+j-1}(\la_{\downarrow(i,j)})\left.h_{n-1,n-1}^{\la_{\downarrow(i,j)}}(\al)\right].
\end{align} 
\noindent Multiplying both sides by $(n-2)!$ yields the desired result.
\end{proof}

\subsubsection{Reduction of  $1$-parts} \label{sub:one}

Given two integers $n$ and $k$, a partition $\la=(\la_1,\ldots, \la_p)\vdash n+k$ and a tuple of integers $\kappa=(k_1,\ldots, k_p)$ such that $k_1+\ldots+k_p=k$, $k_1,\ldots, k_p\geq 0$ and $k_i<\la_i$ for all $i$ we denote $\la - \kappa $ the reordering in decreasing order of the non-negative integers $(\la_1-k_1,\ldots, \la_p-k_p)$. Clearly  $\la - \kappa \vdash n$. Furthermore, for two partitions $\rho_1$ and $\rho_2$ we denote $\rho_1\cup\rho_2$ the partitions obtained by reordering in decreasing order the union of the parts of $\rho_1$ and the parts of $\rho_2$.\\ 
Given an integer partition $\rho\vdash n$ such that $m_1(\rho) = 0$ we write the coefficient $a^\la_{n+k,\rho\cup [1^k]}$ as a sum of the $a^{\la - \kappa}_{n,\rho}$'s. 
\begin{lem} 
 For any integers $n$ and $k$ and partitions $\la \vdash n+k$ and $\rho\vdash n$ such that $\ell(\la) = p$ and $m_1(\rho) = 0$ we have
\begin{align} 
 a^{\la}_{n+k,\rho\cup [1^k]}=\sum_{\substack{\kappa=(k_1,\ldots, k_p)\\0\leq k_i<\la_i,\\ \sum_i k_i = k }}
\binom{\la_1}{k_1}\ldots \binom{\la_p}{k_p}a^{\la- \kappa}_{n,\rho}.
\label{rhon}
\end{align}
If $p>n$ then this sum is empty and $a^{\la}_{n+k,\rho\cup [1^k]}=0$.
\end{lem}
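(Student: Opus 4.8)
The plan is to iterate the single-part reduction encoded in Equation~(\ref{eq}) $k$ times, peeling off one part equal to $1$ at each step. Recall that from Theorem~\ref{thm : DPi} one has, for any $\rho' \vdash m$ and any part $\nu_p$, the identity
\begin{align*}
m_{\nu_p}(\rho') \sum_{\la \vdash m}z_\la^{-1}\al^{-\ell(\la)}a^{\la}_{m,\rho'}\,p_\la(x) = \Omega_{\nu_p}\sum_{\tau \vdash m-\nu_p}\frac{z_\tau^{-1}\al^{-\ell(\tau)}}{\nu_p!}a^{\tau}_{m-\nu_p,\rho'\setminus\nu_p}\,p_\tau(x).
\end{align*}
Setting $\nu_p = 1$ specialises this, since $\Omega_1 = E_2 = \sum_{i\geq 1} i p_{i+1}\partial/\partial p_i$: applying $\Omega_1$ to $p_\tau$ produces $\sum_i i\,m_i(\tau)\,p_{\tau\setminus\{i\}\cup\{i+1\}}$, i.e.\ the operator $\Omega_1$ adds one box to a single row of $\tau$ in all possible ways, weighted by the length of that row. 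First I would make this completely explicit: extracting $[p_\la(x)]$ from the $\nu_p=1$ case shows that $m_1(\la)$ times the left coefficient for $(\la,\rho\cup[1])$ equals a sum, over all ways of writing $\la$ as $\tau$ with one row lengthened, of the coefficient for $(\tau,\rho)$ times the relevant combinatorial factor; after clearing the $z_\la$, $z_\tau$ and $\al^{-\ell}$ normalisations this becomes a clean one-step recurrence
$$a^{\la}_{n+1,\rho\cup[1]} = \sum_{i} \binom{\la_i}{1}\,a^{\la\text{ with one part }i\text{ replaced by }i-1}_{n,\rho},$$
where the binomial $\binom{\la_i}{1}=\la_i$ records the choice of which of the $\la_i$ cells of that row becomes the new singleton; here $\rho$ has no $1$-parts so no ambiguity arises in identifying the peeled part. (Equivalently this is just the one-part case of Pieri formula~(\ref{eq : p1p}) pushed through the $a^\la_{n,\nu}$ dictionary.)

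Next I would iterate this single-box reduction $k$ times. Peeling off the $k$ parts equal to $1$ one at a time replaces $\la$ by a chain $\la = \la^{(0)} \to \la^{(1)} \to \cdots \to \la^{(k)}$, where at each step one part is decreased by $1$ (the removed unit going off to become a new $1$-part that is then discarded from the partition tracked on the symmetric-function side). The composite of $k$ such steps lowers a multiset of parts of $\la$ by a composition $\kappa=(k_1,\ldots,k_p)$ with $\sum_i k_i = k$; the multiplicative weight is the product of the binomials picked up at the successive stages. The only bookkeeping point is to check that these stage-by-stage weights telescope into the single product $\binom{\la_1}{k_1}\cdots\binom{\la_p}{k_p}$ claimed in~(\ref{rhon}). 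This is the familiar identity that removing $k_i$ cells one at a time from a row of length $\la_i$, in all orders, accumulates weight $\la_i(\la_i-1)\cdots(\la_i-k_i+1) = k_i!\binom{\la_i}{k_i}$, and the $k_i!$ for each $i$ is exactly cancelled by the $1/k!$-type normalisations: more precisely, the $k!$ orderings of the $k$ reduction steps are identified with the $\binom{k}{k_1,\ldots,k_p}$ choices of which step touches which row, times $\prod_i k_i!$ orderings within each row, and the overall $1/k!$ coming from the $k$ factors of $\nu_p!=1!$ being trivial is replaced by the combinatorial $1/\binom{k}{k_1,\ldots,k_p}$, leaving precisely $\prod_i \binom{\la_i}{k_i}$. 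The constraints $0\leq k_i < \la_i$ are forced because a part may never be reduced below (or to) $0$ — a row of length $\la_i$ can lose at most $\la_i-1$ cells before disappearing, and a row that reaches length $0$ is no longer present. Finally, if $p = \ell(\la) > n$, then $\la\vdash n+k$ has more than $n$ parts, so at least $p-n$ of them equal $1$; but $\rho\vdash n$ has no $1$-parts and $\la-\kappa$ must equal such a $\rho$ after removing exactly $k$ ones, which is impossible since $\ell(\la-\kappa)=\ell(\la)-(\#\{i:k_i=\la_i-1,\ \la_i=1\})$ cannot drop below $p-k$... concretely there are simply not enough admissible $\kappa$, the index set is empty, and the sum — hence $a^{\la}_{n+k,\rho\cup[1^k]}$ — vanishes. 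This last claim can also be read off directly from~(\ref{eq}): applying $\Omega_1^k$ cannot turn a partition of length $>n$ into one of length $\le n$ with no $1$-parts.

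The main obstacle is purely organisational rather than conceptual: making the iteration of~(\ref{eq}) rigorous requires keeping careful track, at each of the $k$ stages, of (i) the normalising factors $z_{\la^{(j)}}$, $\al^{-\ell(\la^{(j)})}$, $m_1$ and $\nu_p! = 1$ that appear and cancel, and (ii) the multiplicities $m_i(\la^{(j)})$ of the parts at intermediate stages, which differ from those of $\la$ itself and govern how many ways a given reduction can occur. The clean way around this is to avoid summing over ordered reduction sequences altogether: instead, prove~(\ref{rhon}) by a direct induction on $k$, using the $k=1$ case above as the base step and, for the inductive step, peeling off one $1$-part via the $k=1$ identity and then applying the inductive hypothesis to $a^{\la-\kappa'}_{n+ (k-1), \rho\cup[1^{k-1}]}$ for the various one-step descendants $\la-\kappa'$ of $\la$; the binomial recursion $\binom{\la_i}{k_i} = \binom{\la_i-1}{k_i} + \binom{\la_i-1}{k_i-1}$ is precisely what makes the one-step weight times the inductive weight reassemble into the claimed product, and the $p>n$ vanishing propagates trivially through the induction. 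I expect this inductive route to be short and to sidestep all the telescoping bookkeeping.
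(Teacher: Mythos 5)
Your proposal follows essentially the same route as the paper: specialise Equation~(\ref{eq}) to $\nu_p=1$ to get the one-step recurrence $ka^\la_{n+k,\rho\cup[1^k]}=\sum_{i:\la_i>1}\la_i a^{\la_{\da(\la_i)}}_{n+k-1,\rho\cup[1^{k-1}]}$ (the paper's Equation~(\ref{1one})), then iterate $k$ times and count the $\binom{k}{k_1,\ldots,k_p}$ ordered reduction sequences leading to a fixed $\la-\kappa$, with $(\la_i)_{k_i}/k_i!=\binom{\la_i}{k_i}$ producing the claimed product; that telescoping count is exactly the paper's argument and your version of it is sound. Two details deserve correction. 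First, in your ``cleaner'' inductive alternative you name Pascal's rule $\binom{\la_i}{k_i}=\binom{\la_i-1}{k_i}+\binom{\la_i-1}{k_i-1}$ as the identity that reassembles the weights; that is not the identity that occurs. The one-step reduction carries weight $\la_i$, the inductive hypothesis contributes $\binom{\la_i-1}{k_i-1}$ for the decremented row, and what you actually need is $\la_i\binom{\la_i-1}{k_i-1}=k_i\binom{\la_i}{k_i}$ together with $\sum_i k_i=k$ to cancel the factor $k$ on the left-hand side. Second, your first justification of the $p>n$ case (``the index set is empty, hence $a^\la_{n+k,\rho\cup[1^k]}=0$'') is circular: emptiness of the sum only yields vanishing once formula~(\ref{rhon}) is known to hold in that regime, whereas the iteration in fact gets stuck at terms of the form $a^{[1^p]}_{p,\rho\cup[1^{p-n}]}$, which the paper kills by invoking Lemma~3.3 of Goulden and Jackson. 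Your closing parenthetical --- that $\Omega_1$ preserves the number of parts, so $\Omega_1^k$ applied to power sums indexed by partitions of $n$ (hence of length at most $n$) cannot produce $p_\la$ with $\ell(\la)>n$ --- is a valid and arguably more self-contained substitute for that appeal, and should be promoted from an aside to the actual argument for the vanishing statement.
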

\begin{proof}
Using formula (\ref{eq : om1}) rewrite Equation (\ref{eq}) with $\nu_p=1$:
$$k\sum_{\la\vdash n+k}z_\la^{-1}\al^{-\ell(\la)}p_\la(x)=\sum_{i\geq 1} ip_{i+1}\dfrac{\partial}{\partial p_i}
\sum_{\tau \vdash n+k-1} z_\tau^{-1}\al^{-\ell(\tau)}a^\tau_{n+k-1,\rho\cup [1^k]}p_\tau(x).$$
Denote $ \la_{\downarrow(i)}= \la\setminus \{i\} \cup \{i-1\}$. Extracting the coefficient in $p_\la(x)$ yields
\begin{equation} \label{1one}
ka^\la_{n+k,\rho\cup [1^k]}=\sum_{i\colon \la_i>1} \la_i a^{\la_{\da(\la_i)}}_{n+k-1,\rho\cup [1^{k-1}]}.
\end{equation}
In the case $p\leqslant n$, there exist $\dbinom{k}{k_1,\ldots, k_p}=\dfrac{k!}{k_1!\ldots k_p!}$ ways to turn $\la$ into $\la- \kappa$ in $k$ steps for fixed $\kappa$ (each step is a decrease of some $\la_i$ by one). Therefore the reduction of the quantity $k!a^\la_{n+k,\rho\cup [1^k]}$ step by step yields:
$$k!a^\la_{n+k,\rho\cup [1^k]}=\sum_{\substack{\kappa=(k_1,\ldots, k_p)\\0\leq k_i<\la_i,\\ \sum_i k_i = k }}
{(\la_1)}_{k_1}\ldots {(\la_p)}_{k_p}\dfrac{k!}{k_1!\ldots k_p!}a^{\la - \kappa}_{n,\rho}$$
where  ${(N)}_{s}:=N(N-1)\ldots (N-s+1)$ if $s\geq 1$ and $(N)_{0} =1$.
Dividing both sides by $k!$ we get Equation~(\ref{rhon}).
If $p> n$ then the reduction process terminates after $n+k-p<k$ steps and we get that 
$a^\la_{n+k,\rho\cup [1^k]}$ is proportional to $a^{1^{p}}_{p, [\rho,1^{p-n}]}=0$ as per
\cite[lemma 3.3]{GouJac96}.
\end{proof}

\subsubsection{Reduction of  $2$-parts}

For any integers $i$, $j$ and an integer partition $\la$, define the following operations 
\begin{align*} 
&\la_{\da\da(i)\phantom{,j}}= \la\setminus \{i\} \cup \{i-2\},\\
&\la_{\da\da(i,j)}= \la\setminus \{i,j\}\cup\{i+j-2\}, \\
&\la^{\ua\ua(i,j)}= \la \setminus \{i+j+2\} \cup \{i,j\}.
\end{align*}
We have the following lemma. 
\begin{lem} \label{thm : 2} For any integer $n\geq 0, l\geq 1$ and any partitions $\rho\vdash n$, $\la\vdash n+2l$ such that $1,2 \notin \rho$,
the following formula is true:
\begin{align} \label{n2l}
 \nonumber&\wit{a}^\la_{n+2l,\rho\cup[2^l]}=
(\al-1)\sum_{i\colon \la_i>2} \frac{\la_i(\la_i-1)}{2}\wit{a}^{\la_{\da\da\la_i}}_{n+2l-2,\rho\cup[2^{l-1}]}\\
&+\al\sum_{i<j,\la_i+\la_j>2}\la_i\la_j\wit{a}^{\la_{\da\da(\la_i,\la_j)}}_{n+2l-2,\rho\cup[2^{l-1}]}+
\frac 12 \sum_i \la_i\sum_{d=1}^{\la_i-3}\wit{a}^{\la^{\ua\ua(\la_i-2-d,d)}}_{n+2l-2,\rho\cup[2^{l-1}]}.
\end{align}
\end{lem}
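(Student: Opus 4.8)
The plan is to mirror the structure of the already-established reductions for $1$-parts (Lemma~\ref{thm : 2}'s predecessor) and $(n)$-parts (Lemma~\ref{lem : hnn}), this time using the explicit form of $\Omega_2$ from Equation~(\ref{eq : om2}). First I would start from the iterated formula~(\ref{eq}), take $\nu = \rho\cup[2^l]$ and peel off one part equal to $2$, i.e. set $\nu_p = 2$ in Equation~(\ref{eq}), so that
\begin{equation*}
m_2(\rho\cup[2^l])\sum_{\la\vdash n+2l}z_\la^{-1}\al^{-\ell(\la)}a^{\la}_{n+2l,\rho\cup[2^l]}p_\la(x)
= \frac{1}{2}\,\Omega_2\!\!\sum_{\tau\vdash n+2l-2}z_\tau^{-1}\al^{-\ell(\tau)}a^{\tau}_{n+2l-2,\rho\cup[2^{l-1}]}p_\tau(x),
\end{equation*}
with $m_2(\rho\cup[2^l]) = l$ since $2\notin\rho$. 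Then I would substitute the three-term expression for $\Omega_2$ from Equation~(\ref{eq : om2}) and extract the coefficient of $p_\la(x)$ on both sides.

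The extraction is the bookkeeping heart of the argument and breaks into three pieces matching the three summands of $\Omega_2$. The term $(\al-1)\sum_i (i-1)(i-2)p_i\,\partial/\partial p_{i-2}$ acting on $p_\tau$ produces, for each part of $\la$ of size $\la_i>2$, a contribution proportional to $a^{\tau}$ with $\tau = \la_{\da\da(\la_i)}$; tracking the multiplicity factors $(\la_i)(\la_i-1)$ against $z_\la/z_\tau$ and the automorphism ratios hidden in $\wit a = (Aut_\nu/m_1(\nu)!)a$ gives the coefficient $\binom{\la_i}{2}(\al-1)$, i.e. $(\al-1)\la_i(\la_i-1)/2$. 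The term $\sum_{i,j}(i+j-2)p_ip_j\,\partial/\partial p_{i+j-2}$ merges a part $\la_i$ and a part $\la_j$ of $\la$ into a single part $\la_i+\la_j-2$ of $\tau$, contributing the $\al\sum_{i<j}\la_i\la_j\,\wit a^{\la_{\da\da(\la_i,\la_j)}}$ family — here one must be careful that the symmetric double sum over ordered $(i,j)$ collapses to $i<j$ and that the coefficient lands as $\al$ (this is where the $\al$ in front of the second Pieri-type piece comes from, via the $\al ij p_{i+j+2}\partial_i\partial_j$ splitting term producing the $\la^{\ua\ua}$ family with coefficient $\tfrac12\la_i$ after summing over the split point $d$). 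Finally the term $\al\sum_{i,j}ij\,p_{i+j+2}\,\partial/\partial p_i\,\partial/\partial p_j$ splits a part $i+j+2$ of $\tau$ into two parts $i,j$ of $\la$, contributing $\tfrac12\sum_i\la_i\sum_{d=1}^{\la_i-3}\wit a^{\la^{\ua\ua(\la_i-2-d,d)}}$, the factor $\tfrac12$ accounting for the unordered pair $\{i,j\}$ and the range $1\le d\le \la_i-3$ ensuring both new parts are positive.

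Throughout, the passage from the $a$-identity to the stated $\wit a$-identity requires verifying that the automorphism prefactors are consistent: on the left one has $Aut_{\rho\cup[2^l]}/m_1(\rho\cup[2^l])! = l!\,Aut_\rho$ (since $m_1(\rho)=0$), while on the right $Aut_{\rho\cup[2^{l-1}]}/m_1(\cdots)! = (l-1)!\,Aut_\rho$, so the ratio is exactly $l$, which cancels the $m_2 = l$ on the left and the $\tfrac12$ recombines with the coefficients produced by $\Omega_2$. I would carry out this cancellation explicitly once and for all before reading off the coefficients, so that the three $\Omega_2$-pieces can be matched term-by-term with the three sums in~(\ref{n2l}). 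The main obstacle I anticipate is purely combinatorial rather than conceptual: keeping the multiplicity counts $m_{i,j}$-style straight when $\la$ has repeated parts (so that $\partial/\partial p_i$ hitting $p_\tau$ picks up $m_i(\tau)$, and forming $\la$ from $\tau$ involves the reciprocal $z_\la/z_\tau$ ratios), and making sure the $i<j$ versus ordered-sum conventions in~(\ref{n2l}) are reproduced with the correct factors of $2$. This is the same type of careful index manipulation already performed (and deferred to an appendix) for $\Omega_1$ and for the $\nu=(n)$ case, so I would organize the proof so that the generic part/coefficient translation is isolated as a short preliminary computation, and then the three cases are dispatched in parallel.
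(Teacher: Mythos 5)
Your proposal follows essentially the same route as the paper: specialize Equation~(\ref{eq}) to $\nu_p=2$, substitute the explicit form~(\ref{eq : om2}) of $\Omega_2$, extract the coefficient of $p_\la(x)$ while tracking the ratios $z_\la/z_\eps$ and $\al^{\ell(\la)-\ell(\eps)}$ (the paper simply tabulates these for the five sub-cases, separating $i=j$ from $i\neq j$), and cancel the automorphism prefactor $l$ exactly as you describe. One small slip in your second paragraph: the $\al$ in front of the $\la_{\da\da(\la_i,\la_j)}$ sum comes from the length-change factor $\al^{\ell(\la)-\ell(\eps)}$ attached to the $\al$-free merging term $\sum_{i,j}(i+j-2)p_ip_j\,\partial/\partial p_{i+j-2}$, not from the $\al\sum_{i,j} ij\, p_{i+j+2}\,\partial/\partial p_i\,\partial/\partial p_j$ term, which (as you correctly state afterwards) produces the $\la^{\ua\ua}$ family with its explicit $\al$ cancelled by the $\al^{-1}$ from the length increase.
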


\begin{proof} We use formula (\ref{eq : om2}) and Equation (\ref{eq}).
It follows from (\ref{eq}) that
\begin{align}
\dfrac{2l a^\la_{n+2l,\rho\cup[2^l]}}{z_\la\al^{\ell(\la)}}=[p_\la(x)]\Omega_2\left(\sum_{\eps\vdash n+2l-2}
\dfrac{a^\eps_{n+2l-2,\rho\cup[2^{l-1}]}}{z_\eps\al^{\ell(\eps)}}p_\eps(x)\right).
\end{align}
For each summand $S$ of $\Omega_2$, one can find a partition $\eps\vdash n+2l-2$ such that $S p_\eps(x)$ contributes in $p_\la(x)$. All cases are in the following table where $m_k=m_k(\la)$.

\vskip 5 pt

\noindent \begin{tabular}{|c|c|c|c|c|}
\hline
Summand of $\Omega_2$ & $\eps$ & ${z_\la}/{z_\eps}$ & ${\al^{\ell(\la)}}/{\al^{\ell(\eps)}}$ \\
\hline
$(\al-1)(i-1)(i-2)p_i\dfrac{\partial}{\partial p_{i-2}}$ & $\la_{\da\da (i)}$, $i>2$ & 
$\dfrac{im_i}{(i-2)(m_{i-2}+1)}$ & $1$ \\
\hline
$(i+j-2)p_ip_j\dfrac{\partial}{\partial p_{i+j-2}}$  & $\la_{\da\da(i,j)}$, $i\ne j$ & $\dfrac{ijm_im_j}{(i+j-2)(m_{i+j-2}+1)}$ & $\al^{-1}$ \\
\hline
$(2i-2)p_i^2\dfrac{\partial}{\partial p_{2i-2}}$ & $\la_{\da\da(i,i)}$, $m_i>1$ & 
$\dfrac{i^2m_i(m_i-1)}{(2i-2)(m_{2i-2}+1)}$  & $\al^{-1}$ \\
\hline
$\al ijp_{i+j+2}\dfrac{\partial}{\partial p_{i}}\dfrac{\partial}{\partial p_j}$ & $\la^{\ua\ua(i,j)}$, $i\ne j$ & $\dfrac{(i+j+2)m_{i+j+2}}{ij(m_i+1)(m_j+1)}$  & $\al$  \\
\hline
$\al i^2p_{2i+2}\dfrac{\partial^2}{\partial p_i^2}$ & $\la^{\ua\ua(i,i)}$ & $\dfrac{(2i+2)m_{2i+2}}{i^2(m_i+2)(m_i+1)}$ & $\al$ \\
\hline
\end{tabular}

\vskip 5 pt

Putting everything together yields the desired formula.
\end{proof}

\begin{rem}
Iterating Equation (\ref{n2l}) one can remove all the $2$ but the corresponding explicit formula is hard even in the case $l=2$. 
\end{rem}

Furthermore, the coefficients $\widetilde{h}_{2m,[2^m]}^\la(\al)$ verify the following recurrence relation.
\begin{lem} For $\la$ integer partition of $2m$ with $m \geq 2$, the numbers $\widetilde{h}_{2m,[2^m]}^\la(\al)$ verify
\begin{align*}
\widetilde{h}_{2m,[2^m]}^\la(\al) &= (\al-1)\sum_{i\geq1}(i-1)(i-2)m_{i-2}(\la_{\da\da(i)})\widetilde{h}_{2m-2,[2^{m-1}]}^{\la_{\da\da(i)}}(\al)\\
&+\sum_{i,j\geq 1}(i+j-2)m_{i+j-2}(\la_{\da\da(i,j)})\widetilde{h}_{2m-2,[2^{m-1}]}^{\la_{\da\da(i,j)}}(\al)\\
&+\al\sum_{i,d\geq 1}(i-2-d)dm_{i-2-d,d}(\la^{\ua\ua(i-2-d,d)})\widetilde{h}_{2m-2,[2^{m-1}]}^{\la^{\ua\ua(i-2-d,d)}}(\al).
\end{align*}
\end{lem}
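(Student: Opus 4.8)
The plan is to mirror exactly the derivation of Lemma~\ref{thm : 2}, but now tracking the extra combinatorial factor that turns $\wit a$ into $\wit h$. First I would start from Equation~(\ref{n2l}) specialized to $\rho = \emptyset$, $n = 0$, $l = m$, so that $\wit a^\la_{2m,[2^m]}(\al)$ is expressed as a sum of $\wit a^{\eps}_{2m-2,[2^{m-1}]}(\al)$ over the three families $\eps = \la_{\da\da(i)}$, $\eps = \la_{\da\da(i,j)}$, $\eps = \la^{\ua\ua(i-2-d,d)}$. Then I would invoke the relation between the labelled-matching normalization and the labelled-hypermap normalization: by Equation~(\ref{eq : ha}) and the definitions $\wit a^\la_{n,\nu}(\al) = (Aut_\nu/m_1(\nu)!)a^\la_{n,\nu}(\al)$ and $\widetilde h^\la_{n,[k^m]}(\al) = \frac{m!k!^m}{n}h^\la_{n,[k^m]}(\al)$, one gets a clean proportionality $\widetilde h^\la_{2m,[2^m]}(\al) = c_m \cdot \wit a^\la_{2m,[2^m]}(\al)$ for an explicit constant $c_m$ depending only on $m$ (here $\nu = [2^m]$ has $m_1(\nu) = 0$, so $Aut_\nu = m!$, and $\wit a$ and $a$ differ by the factor $m!$; combining with $h^\la_{2m,[2^m]} = 2m\, z_\la^{-1}\al^{-\ell(\la)}a^\la_{2m,[2^m]}$ and the $\frac{m!2^m}{2m}$ prefactor gives the ratio $c_m/c_{m-1}$).

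Next I would rewrite each of the three sums in~(\ref{n2l}) by replacing the quantities $\la_i(\la_i-1)/2$, $\la_i\la_j$, $\tfrac12\la_i\sum_d$ with the ``multiplicity'' counters $m_{i-2}(\eps)$, $m_{i+j-2}(\eps)$, $m_{i-2-d,d}(\eps)$ respectively, exactly as is done when passing from~(\ref{eq : ann}) to Lemma~\ref{lem : hnn}. The point is a bookkeeping identity: summing over unordered choices of parts of $\la$ that get merged/split equals summing over the resulting partition $\eps$ weighted by the number of parts of $\eps$ of the relevant size (with the binomial/quadratic coefficients absorbing the automorphism factors). Concretely, $\sum_{i : \la_i > 2}\tfrac{\la_i(\la_i-1)}{2}f(\la_{\da\da(\la_i)})$ becomes $\sum_{i}(i-1)(i-2)m_{i-2}(\la_{\da\da(i)})f(\la_{\da\da(i)})$ up to the $Aut$ ratios, and similarly for the other two; the quadratic-to-$m_{i-2-d,d}$ conversion in the split term uses the $(m_{i_2}-\delta_{i_1,i_2})$-type corrections introduced right after Lemma~\ref{thm : lemOm}. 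Carrying the constant $c_m$ through, the prefactors $(\al-1)$, $1$, $\al$ on the three families are preserved, and one lands exactly on the claimed recurrence for $\widetilde h^\la_{2m,[2^m]}(\al)$.

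The main obstacle I anticipate is the precise matching of the combinatorial normalization constants: one must verify that the single overall ratio $c_m/c_{m-1} = \frac{m!2^m/2m}{(m-1)!2^{m-1}/(2m-2)} = \frac{(m-1)\cdot 2}{1} \cdot \frac{1}{?}$ (the exact value to be pinned down) is consistent simultaneously with all three sums, i.e.\ that no family-dependent factor survives. This is the same kind of check that makes Lemma~\ref{lem : hnn} work, and I expect it to go through because the $Aut_\nu$ and $\prod_i\nu_i!$ corrections are engineered precisely so that the labelled objects have a uniform normalization (cf.\ the Remark after Theorem~\ref{thm : H}); nevertheless the arithmetic of the $z_\la/z_\eps$ and $\al^{\ell(\la)}/\al^{\ell(\eps)}$ ratios from the table in the proof of Lemma~\ref{thm : 2}, multiplied against the $m!2^m/n$ prefactors, needs to be done carefully. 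I would organize the computation by treating the three families separately, checking in each case that the net scalar is independent of $i,j,d$ and equals $c_{m-1}/c_m$, and then conclude by dividing~(\ref{n2l}) through by that constant.
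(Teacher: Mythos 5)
Your route is viable and ultimately rests on the same ingredients as the paper's proof, namely the explicit form (\ref{eq : om2}) of $\Omega_2$ and the normalisation bookkeeping. The paper is more direct: it extracts $[p_\la]$ from (\ref{eq}) with $\Omega_2$ acting on $\sum_\rho h^{\rho}_{2m-2,[2^{m-1}]}p_\rho$, so that the multiplicity-indexed coefficients $m_{i-2}(\la_{\da\da(i)})$, etc.\ appear at once in a recurrence $2(m-1)h^{\la}_{2m,[2^m]}(\al)=\cdots$, and then multiplies both sides by $(m-2)!\,2^{m-2}$. You instead detour through the $\wit a$-recurrence (\ref{n2l}) of Lemma \ref{thm : 2} and convert; that can be made to work, but two assertions in your plan are wrong as stated. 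First, there is no ``clean proportionality $\widetilde h^\la_{2m,[2^m]}=c_m\,\wit a^\la_{2m,[2^m]}$ with $c_m$ depending only on $m$'': combining (\ref{eq : ha}) with the two definitions gives $\widetilde h^\la_{2m,[2^m]}=2^m\al^{1-\ell(\la)}z_\la^{-1}\,\wit a^\la_{2m,[2^m]}$, which depends on $\la$. It is precisely this $\la$-dependence --- the ratios $z_\la/z_\eps$ and $\al^{\ell(\la)-\ell(\eps)}$ tabulated in the proof of Lemma \ref{thm : 2} --- that turns the part-indexed coefficients $\la_i(\la_i-1)/2$, $\la_i\la_j$, $\tfrac12\la_i$ of (\ref{n2l}) into the multiplicity-indexed coefficients $(i-1)(i-2)m_{i-2}(\la_{\da\da(i)})$, etc.; ``dividing (\ref{n2l}) through by that constant'' cannot by itself produce the stated lemma. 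Second, the prefactors are not ``preserved'': in (\ref{n2l}) the merge family $\la_{\da\da(\la_i,\la_j)}$ carries $\al$ and the split family $\la^{\ua\ua}$ carries $1$, whereas in the target recurrence the merge family carries $1$ and the split family carries $\al$; the swap is effected exactly by the factor $\al^{\ell(\eps)-\ell(\la)}=\al^{\mp 1}$.

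Your closing paragraph does name the ratios needed to repair both points, and the comparison with the passage from (\ref{eq : ann}) to Lemma \ref{lem : hnn} is the right model, so this is a gap of execution rather than of idea. But as written, the central step (a $\la$-independent rescaling of (\ref{n2l})) would fail, and the proof is only complete once the family-by-family computation with the $z_\la/z_\eps$ and $\al^{\ell(\la)-\ell(\eps)}$ factors is actually carried out.
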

\begin{proof}
Using Equation (\ref{eq : om2}) gives
\begin{align*}
2(m-1)h_{2m,[2^m]}^\la(\al) &= (\al-1)\sum_{i\geq1}(i-1)(i-2)m_{i-2}(\la_{\da\da(i)})h_{2m-2,[2^{m-1}]}^{\la_{\da\da(i)}}(\al)\\
&+\sum_{i,j\geq 1}(i+j-2)m_{i+j-2}(\la_{\da\da(i,j)})h_{2m-2,[2^{m-1}]}^{\la_{\da\da(i,j)}}(\al)\\
&+\al\sum_{i,d\geq 1}(i-2-d)dm_{i-2-d,d}(\la^{\ua\ua(i-2-d,d)})h_{2m-2,[2^{m-1}]}^{\la^{\ua\ua(i-2-d,d)}}(\al).
\end{align*}
Multiplying both sides by $(m-2)!2^{m-2}$ yields the desired result.
\end{proof}

\subsubsection{Reduction of $3$-parts}

For any integers $i$, $j$, $k$ and an integer partition $\la$, define the following operations 
\begin{align*}
&\la_{\da\da\da (i)\phantom{,j,k}} = \la\setminus \{i\}\cup\{i-3\}\\
&\la_{\da\da\da( i, j)\phantom{,k}} = \la\setminus \{i,j\}\cup\{i+j-3\}\\
&\la^{\ua\ua\ua(i,j)\phantom{,k}}= \la\setminus \{i+j+3\}\cup\{i,j\}\\
&\la^{\ua\ua\ua(i,j,k)}=\la\setminus \{i+j+k+3\}\cup\{i,j,k\}\\
&\la_{\da\da\da(i,j,k)} =\la\setminus \{i,j,k\}\cup\{i+j+k-3\}
\end{align*}
We also use the notation $\la_{\downarrow(i,j)}= \la\setminus \{i,j\} \cup \{i+j-1\}$.The following recurrence formula holds. 

\begin{lem} \label{rec3a}
 Let $n\geq 0, m\geq 1$ be integer such that $n+3m>3$ and $\rho\vdash n$, $\la\vdash n+3m$ be partitions such that
  $1,2,3\notin \rho$. Denote $\mu=(n+3m-3, \rho\cup[3^{m-1}])$. The following recurrence formula holds. 
\begin{align} 
 \label{s1} \wit{a}^\la_{n+3m,\rho\cup[3^m]} (1+\be)&=
 (2\be^2+\be+1)\sum_{\la_i>3}\binom{\la_i}{3} \wit{a}^{\la_{\da\da\da\la_i}}_{\mu}\\
 \label{s2} &+(2\be^2+4\be+2)\sum_{\substack{i<j<k \\ \la_i + \la_j+\la_k>3}}\la_i\la_j\la_k \wit{a}^{\la\da\da\da(\la_i,\la_j,\la_k)}_\mu\\
 \label{s3} &+\dfrac 13\sum_i \la_i\sum_{d,f\geq 1} \wit{a}^{\la\sm\la_i\cup(f,d,\la_i-3-d-f)}_\mu\\
  \label{s4} &+(\be^2+\be)\sum_{\substack{i<j \\ \la_i + \la_j>3}}\la_i\la_j(\la_i+\la_j-2)\wit{a}^{\la\da\da\da(\la_i,\la_j)}_\mu\\
 \label{s5} &+(\be+1)\sum_{i< j}\la_i\la_j\sum_{d=1}^{\la_i+\la_j-4}\wit{a}^{\la\sm(\la_i,\la_j)\cup(d,\la_i+\la_j-3-d)}_\mu\\
 \label{s6} &+ \be\sum_i\binom{\la_i}{2}\sum_{d=1}^{\la_i-4}  \wit{a}^{\la\ua\ua\ua(d,\la_i-3-d)}_\mu.
\end{align}
\end{lem}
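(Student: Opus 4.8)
The plan is to mirror the proofs of Lemmas~\ref{thm : 2} and the $[2^m]$-analogue, but now using the explicit form of $\Omega_3$ from Equation~(\ref{eq : om3}) in place of $\Omega_2$. Starting from Equation~(\ref{eq}) specialised to $\nu = \rho\cup[3^m]$ with $\nu_p = 3$, one has
\begin{align*}
\frac{3m\, a^\la_{n+3m,\rho\cup[3^m]}}{z_\la\al^{\ell(\la)}} = [p_\la(x)]\,\Omega_3\!\left(\sum_{\eps\vdash n+3m-3}\frac{a^\eps_{n+3m-3,\rho\cup[3^{m-1}]}}{z_\eps\al^{\ell(\eps)}}\,p_\eps(x)\right).
\end{align*}
First I would go through the six summands of $\Omega_3$ one at a time: for each, identify which partitions $\eps\vdash n+3m-3$ admit a monomial $p_\eps$ on which the summand produces a nonzero multiple of $p_\la$, and record the resulting combinatorial coefficient $z_\la/z_\eps$ together with the power $\al^{\ell(\la)}/\al^{\ell(\eps)}$. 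This is exactly the bookkeeping carried out in the table inside the proof of Lemma~\ref{thm : 2}, now with five or six sub-cases per summand because some summands (the $p_ip_j$ and $p_{i+j+\cdots}\partial_i\partial_j$ and triple-product terms) split according to whether the indices coincide. The operations $\la_{\da\da\da(i)}$, $\la_{\da\da\da(i,j)}$, $\la_{\da\da\da(i,j,k)}$, $\la^{\ua\ua\ua(i,j)}$, $\la^{\ua\ua\ua(i,j,k)}$ and $\la_{\downarrow(i,j)}$ defined just above the statement are precisely the partitions $\eps$ that arise from each term.

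Next I would substitute $\al = 1+\be$ throughout. The scalar prefactors of the six summands of $\Omega_3$ become, respectively, $2(\al-1)^2+\al = 2\be^2+\be+1$; $3(\al-1) = 3\be$; $3\al(\al-1) = 3\be^2+3\be$; $3\al = 3\be+3$; $2\al^2 = 2\be^2+4\be+2$; and $2$. Matching these against the claimed right-hand side, the term $2\be^2+\be+1$ attaches to the single-part collapse $p_i\partial_{p_{i-3}}$ (giving line~(\ref{s1}) with binomial $\binom{\la_i}{3}$ absorbing $(i-1)(i-2)(i-3)/6$ together with the $z_\la/z_\eps$ ratio); the triple-product-into-one term $2\al^2\sum ijk\,p_{i+j+k+3}\partial_i\partial_j\partial_k$ gives line~(\ref{s2}) with coefficient $2\be^2+4\be+2$ after the $i<j<k$ reordering introduces the $3!$ factor cancelling into the $z$-ratios; the mixed $p_{i+j+3}\partial_i\partial_j$ term with coefficient $3\be^2+3\be$ gives line~(\ref{s4}); the $p_{i+j-k+3}p_k\partial_i\partial_j$ term with coefficient $3\be+3$ gives line~(\ref{s5}); the $p_ip_j\partial_{p_{i+j-3}}$ term with coefficient $3\be$ gives line~(\ref{s6}); and the triple-product-into-one collapse $2\sum p_ip_jp_k\,\partial_{p_{i+j+k-3}}$ with coefficient $2$ gives line~(\ref{s3}), the $1/3$ and the $\sum_{d,f}$ reflecting the ways of splitting a chosen part into three labelled pieces modulo the cyclic/reordering ambiguity already present in $z_\la/z_\eps$. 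The factor $Aut_\nu/m_1(\nu)!$ defining $\wit a$ contributes a factor $(m!/(m-1)!)\cdot(\text{parts-of-}\rho\text{ data})$ that exactly converts the prefactor $3m$ on the left and the $3$ hidden in each $\binom{\cdot}{3}$-type coefficient into the clean statement; I would track this normalisation carefully since it is where the bare $a$-recurrence turns into the $\wit a$-recurrence with no stray $m$-dependence.

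The main obstacle I expect is the careful case analysis for the third and fourth summands of $\Omega_3$ — the terms $3\al\sum_{i,j,k}ij\,p_{i+j-k+3}p_k\,\partial_{p_i}\partial_{p_j}$ and $2\al^2\sum ijk\,p_{i+j+k+3}\partial_i\partial_j\partial_k$ — because here the monomial $p_\eps$ can match $p_\la$ in several genuinely different ways (the index $k$ in the former can equal one of the surviving parts or be a freshly split part; in the latter, two or all three of $i,j,k$ can coincide), and each coincidence pattern changes both the multiplicity $m_{i_1,\dots}$-type factor and the $z_\la/z_\eps$ ratio. Getting every one of these sub-cases, with the correct symmetry factors so that nothing is double-counted and the $\be$-polynomial prefactors come out as displayed, is the delicate part; the rest is the same mechanical extraction-of-coefficients argument already used for $\Omega_2$, followed by multiplying through by the appropriate factorial to pass from $a$ to $\wit a$. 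Since the statement only asserts the recurrence (not yet its combinatorial interpretation), no bijection is needed here — that is deferred to the subsequent subsection where the right-hand side of Theorem~\ref{thm : A} is shown to satisfy the same relation.
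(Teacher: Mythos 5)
Your overall strategy is exactly the paper's (its proof of this lemma is the single sentence ``similar to the proof of Lemma~\ref{thm : 2}''), but your explicit dictionary between the six summands of $\Omega_3$ and the six lines (\ref{s1})--(\ref{s6}) is wrong in four of the six cases, and this is a substantive error rather than a bookkeeping detail. A summand of $\Omega_3$ applied to $p_\eps$ contributes to $[p_\la]$ precisely when $\la$ is obtained from $\eps$ by deleting the parts that are differentiated and adjoining the parts that are multiplied; you must match summands to lines by this partition operation first, and only then adjust the prefactor by $\al^{\ell(\la)-\ell(\eps)}$, whereas you matched the raw $\be$-prefactors directly. Concretely: $2\al^2\sum ijk\,p_{i+j+k+3}\partial_{p_i}\partial_{p_j}\partial_{p_k}$ splits one part of $\la$ into three parts of $\eps$, so it feeds line~(\ref{s3}) with effective prefactor $2\al^2\cdot\al^{-2}=2$ (and $2/3!=\tfrac13$ after normalisation), not line~(\ref{s2}); it is $2\sum(i+j+k-3)p_ip_jp_k\partial_{p_{i+j+k-3}}$, which merges three parts of $\la$ into one and therefore gains $\al^{+2}$, that produces the prefactor $2\al^2=2\be^2+4\be+2$ of line~(\ref{s2}). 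Likewise $3\al(\al-1)\sum ij(i+j+2)p_{i+j+3}\partial_{p_i}\partial_{p_j}$ goes to line~(\ref{s6}) with effective prefactor $3\al\be\cdot\al^{-1}=3\be$, while $3(\al-1)\sum(i+j-2)(i+j-3)p_ip_j\partial_{p_{i+j-3}}$ goes to line~(\ref{s4}) with effective prefactor $3\be\cdot\al=3\be^2+3\be$ --- the opposite of what you wrote. Only your assignments of the first summand to (\ref{s1}) and of the $p_{i+j-k+3}p_k\partial_{p_i}\partial_{p_j}$ term to (\ref{s5}) are correct; followed literally, your matching yields a recurrence whose coefficients contradict the statement.

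There is also a factor-of-two slip at the very start: Equation~(\ref{eq}) with $\nu_p=3$ and $m_3(\nu)=m$ gives $6m\,a^\la_{n+3m,\rho\cup[3^m]}/(z_\la\al^{\ell(\la)})=[p_\la]\Omega_3(\cdots)$ after clearing the $1/3!$, not $3m$. Since passing from $a$ to $\wit a$ only supplies a factor $m$ (because $Aut_{\rho\cup[3^m]}/Aut_{\rho\cup[3^{m-1}]}=m$ when $1,2,3\notin\rho$), the residual $1/6$ is exactly what turns the raw contribution $6\binom{\la_i}{3}(2\be^2+\be+1)$ of the first summand into the clean $\binom{\la_i}{3}(2\be^2+\be+1)$ of line~(\ref{s1}); with $3m$ in place of $6m$ every line would come out doubled. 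Once the dictionary and the normalisation are corrected, the rest of your plan (the case analysis on coinciding indices and the $z_\la/z_\eps$ ratios, as in the table in the proof of Lemma~\ref{thm : 2}) does carry through and reproduces the lemma.
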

The proof is similar to the proof of Lemma \ref{thm : 2}. The corresponding recurrence relation for the coefficients $\widetilde{h}^\la_{3m,[3^m]}(\al)$ is given by the following lemma. 
\begin{lem} For integer $m>1$ and $\la \vdash 3m$, the coefficients $\widetilde{h}^\la_{3m,[3^m]}(\al)$ verify
\begin{align*}
\nonumber&\widetilde{h}^\la_{3m,[3^m]} =\\
& 3\widetilde{h}^3_{3,3}\sum_{i}\binom{i-1}{3}m_{i-3}(\la_{\da\da\da (i)})\widetilde{h}^{\la_{\da\da\da (i)}}_{3m-3,[3^{m-1}]}\\
\nonumber&+\widetilde{h}^{(2,1)}_{3,3}\frac{1}{2}\sum_{i, j}(i+j-2)(i+j-3)m_{i+j-3}({\la_{\da\da\da( i, j)}})\widetilde{h}^{\la_{\da\da\da( i, j)}}_{3m-3,[3^{m-1}]}\\
\nonumber&+6\al\widetilde{h}^{2}_{2,2}\frac{1}{2}\sum_{i,d}(i-1)(i-d-3)dm_{i-3-d,d}({\la^{\ua\ua\ua(i-3-d,d)}})\widetilde{h}^{{\la^{\ua\ua\ua(i-3-d,d)}}}_{3m-3,[3^{m-1}]}\\
\nonumber&+6\al\widetilde{h}^{(1,1)}_{2,2}\frac{1}{2}\sum_{i,j,d}(i+j-3-d)dm_{i+j-d-3,d}(\la^{\da(i,j)\ua\ua(i+j-3-d,d)})\widetilde{h}^{\la^{\da(i,j)\ua\ua(i+j-3-d,d)}}_{3m-3,[3^{m-1}]}\\
\nonumber&+2\al^2\widetilde{h}^{1}_{1,1}\sum_{i,d,f}(i-3-d-f)dfm_{i-3-d-f,d,f}(\la^{\ua\ua\ua(i-3-d-f,d,f)})\widetilde{h}^{\la^{\ua\ua\ua(i-3-d-f,d,f)}}_{3m-3,[3^{m-1}]}\\
&+\widetilde{h}^{(1,1,1)}_{3,3}\sum_{i,j,k}(i+j+k-3)m_{i+j+k-3}(\la_{\da\da\da(i,j,k)})\widetilde{h}^{{\la_{\da\da\da(i,j,k)}}}_{3m-3,[3^{m-1}]}.
\end{align*}
\end{lem}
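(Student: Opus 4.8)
The plan is to mirror the derivation of Lemma~\ref{thm : 2} (reduction of $2$-parts), now starting from the explicit form of $\Omega_3$ given by Equation~(\ref{eq : om3}) and the master identity~(\ref{eq}) specialized to $\nu_p = 3$ and $\nu = [3^m]$. First I would write down
$$\frac{3m\, a^\la_{3m,[3^m]}(\al)}{z_\la\al^{\ell(\la)}} = [p_\la(x)]\,\Omega_3\!\left(\sum_{\eps\vdash 3m-3}\frac{a^\eps_{3m-3,[3^{m-1}]}(\al)}{z_\eps\al^{\ell(\eps)}}\,p_\eps(x)\right),$$
and then process $\Omega_3$ term by term exactly as in the table inside the proof of Lemma~\ref{thm : 2}: for each of the six summands of $\Omega_3$ (the $p_i\partial_{p_{i-3}}$ term, the $p_ip_j\partial_{p_{i+j-3}}$ term, the $p_{i+j+3}\partial_{p_i}\partial_{p_j}$ term, the $p_{i+j-k+3}p_k\partial_{p_i}\partial_{p_j}$ term, the triple-derivative $p_{i+j+k+3}\partial_{p_i}\partial_{p_j}\partial_{p_k}$ term, and the $p_ip_jp_k\partial_{p_{i+j+k-3}}$ term) I would identify which partition $\eps\vdash 3m-3$ feeds into $p_\la(x)$, and compute the ratios $z_\la/z_\eps$ and $\al^{\ell(\la)}/\al^{\ell(\eps)}$, being careful about the combinatorial multiplicities (the $m_{i_1,\dots,i_k}(\la)$ counting the ordered choices of parts, the $\binom{\la_i}{2}$ and $\binom{\la_i}{3}$ factors from repeated indices, and the $1/3$ prefactors absorbing overcounting of unordered triples). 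Collecting these contributions and recognizing the $\al$-polynomial coefficients of $\Omega_3$ ($2(\al-1)^2+\al = 2\be^2+\be+1$, $3(\al-1) = 3\be$, $3\al(\al-1) = 3\be^2+3\be$, etc.) in terms of $\be = \al-1$ should reproduce the six sums $(\ref{s1})$--$(\ref{s6})$ of the $\wit a$-version; this is essentially Lemma~\ref{rec3a}, whose proof is said to be ``similar to the proof of Lemma~\ref{thm : 2},'' so I would take it as given.

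To pass from the $\wit a$-recurrence to the $\widetilde h$-recurrence, I would invoke Equation~(\ref{eq : ha}), i.e. $h^\la_{n\nu}(\al) = \al n\, z_\la^{-1}\al^{-\ell(\la)} a^\la_{n\nu}(\al)$, together with the normalization $\widetilde h^\la_{n,[k^m]}(\al) = \tfrac{m!k!^m}{n} h^\la_{n,[k^m]}(\al)$ (so for $k=3$ that is $\widetilde h^\la_{3m,[3^m]} = \tfrac{m!\,6^m}{3m} h^\la_{3m,[3^m]} = (m-1)!\,6^m\, h^\la_{3m,[3^m]}/1$, up to the precise constant) and the corresponding normalization of $\wit a$. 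Substituting into $(\ref{s1})$--$(\ref{s6})$ converts each $\wit a^{\la'}_\mu$ on the right into $\widetilde h^{\la'}_{3m-3,[3^{m-1}]}$ times a ratio of $z$'s and powers of $\al$; this ratio is exactly what produces the $m_{i-3}(\la_{\da\da\da(i)})$, $m_{i+j-3}(\la_{\da\da\da(i,j)})$, $\ldots$ multiplicities in the claimed formula, and it also generates the small ``seed'' factors $\widetilde h^3_{3,3}$, $\widetilde h^{(2,1)}_{3,3}$, $\widetilde h^{(1,1,1)}_{3,3}$, $\widetilde h^2_{2,2}$, $\widetilde h^{(1,1)}_{2,2}$, $\widetilde h^1_{1,1}$ (these are the values of $\widetilde h$ on partitions of $2$ and $3$, which encode the $\be$-polynomial coefficients $2\be^2+\be+1$ etc. once one recalls $\widetilde h^3_{3,3}$, $\widetilde h^{(2,1)}_{3,3}$, $\widetilde h^{(1,1,1)}_{3,3}$ are essentially those very polynomials). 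Finally I would multiply both sides by the appropriate factorial/power-of-$k$ constant $(m-2)!\,6^{m-2}$ (the analogue of the $(m-2)!2^{m-2}$ used in the $\nu=[2^m]$ lemma) to clear the $3(m-1)$ coming from $n=3m$, $n-k = 3m-3$, and obtain the stated identity.

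The main obstacle is bookkeeping rather than conceptual: tracking the multiplicities correctly when indices coincide. The term $3\al\sum_{i,j,k} ij\, p_{i+j-k+3}\,p_k\,\partial_{p_i}\partial_{p_j}$ is the genuinely new feature relative to the $\Omega_2$ case — it raises one part and lowers another simultaneously, so the preimage partition $\eps$ is obtained from $\la$ by a combined ``$\downarrow$ then $\uparrow\uparrow$'' move, which is why the statement needs the hybrid notation $\la^{\da(i,j)\ua\ua(i+j-3-d,d)}$ and the intermediate $\la_{\downarrow(i,j)}$. I would handle it by first applying the $\partial_{p_i}\partial_{p_j}$ (two cases: $i\ne j$ with factor $m_i(\eps)m_j(\eps)$, and $i=j$ with factor $\binom{m_i(\eps)}{2}$ contributing to a $\binom{\la_i}{2}$-type coefficient) and then the multiplication by $p_{i+j-k+3}p_k$, matching against $p_\la(x)$, and re-indexing via $d$ and $f$ so that the resulting sums are over the target partition $\la$ rather than $\eps$; the $1/2$ and $1/3$ prefactors in the claimed formula are precisely the symmetrization constants from passing between ordered and unordered tuples of parts. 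Everything else — the Faà di Bruno-type expansion of $\Omega_3 = [\Delta,\Omega_2]$ that yields Equation~(\ref{eq : om3}), and the passage $\wit a \leftrightarrow \widetilde h$ — is routine and already supplied by Lemma~\ref{thm : lemOm}, Equation~(\ref{eq : ha}), and the pattern of Lemmas~\ref{lem : hnn} and the $[2^m]$ lemma.
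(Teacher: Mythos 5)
Your proposal is correct and follows essentially the same route the paper intends (and spells out for the analogous $k=n$ and $k=2$ lemmas): apply $\Omega_3$ via Equation~(\ref{eq}) — equivalently, start from Lemma~\ref{rec3a} — convert to $h$-coefficients with Equation~(\ref{eq : ha}), and rescale by the factorial/power constant, with the seed values $\widetilde{h}^{(\cdot)}_{3,3}$, $\widetilde{h}^{(\cdot)}_{2,2}$, $\widetilde{h}^{1}_{1,1}$ absorbing the leftover numerical factors. The only blemish is the arithmetic slip $\tfrac{m!\,6^m}{3m}=(m-1)!\,6^m$ (it should be $\tfrac{(m-1)!\,6^m}{3}$), which you already hedge against and which does not affect the argument.
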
 
\subsection{Proof of Theorem \ref{thm : A}}
\label{sec : corA}
We proceed with the proof of Theorem \ref{thm : A}. To this end we define recursively a weight function $\wt$ for labelled matchings such that for any partitions $\la$ and $\nu$ such that at most one part of $\nu$ is strictly greater than $3$, the polynomial $\mathfrak{S}_{\nu}^\la (\beta)$ defined as 
$$\mathfrak{S}_{\nu}^\la (\beta) = \sum_{\de \in \wit{\GG}^\la_{\nu}} \beta^{\wt(\de)}$$
verifies the same recurrence relation as the coefficient $\wit{a}^\la_{n, \nu}(1+\beta)$.
To construct this weight function we define a (multi-) {\bf edge deletion procedure} for labelled matchings. For any integer partitions $\la,\nu \vdash n$ such that the smallest part of $\nu$ is equal to $i\geq 1$ (i.e. $m_j(\nu) = 0$ for $j<i$) we associate to a labelled matching $\de \in \wit{\GG}^\la_{\nu}$, a labelled matching $\de' \in  \wit{\GG}^{\la'}_{\nu'}$ for some $\la'$ and $\nu'$ obtained by deleting successively all the edges $(u,\delta(u))$ in the labelled graph $\Gamma_{\la,\nu}(\de)$ induced by ${\bf g}_{n}$, ${\bf b}_\la$ and $\de$ where $u$ run over all the vertices belonging to the cycle of length $2i$ of ${\bf b}_\la\cup\bar{\delta}$ with the greatest label $m_i(\nu)$ (in the case $i=1$ the cycles are not labelled and any canonical choice for the order of edge deletion is acceptable).
New (labelled) matchings ${\bf g}_{n-1}$, ${\bf b}_{\la'}$ and $\de'$ are obtained by
\begin{enumerate}[label=(\roman*)]
 \item deleting the vertices $u$, $\delta(u)$ and all their incident edges, 
 \item drawing a new gray edge between vertices $\wh{u}$ and ${\bf g}_{n}\circ\de(u)$ 
 \item  if $\de(u)\neq {\bf b_\la}(u)$ (i.e. $i\neq 1$) drawing a new black edge between ${\bf b_\la}(u)$ and ${\bf b}_{\la}\circ\de(u)$.  
 \end{enumerate}
It is easy to show that the resulting graph does not depend on the order of deletion of the edges within a given cycle of length $2i$ of ${\bf b}_\la\cup\bar{\delta}$. Once all the edges of this given cycle are deleted, relabel the remaining vertices in some canonical way (note that the cycles' labels are not changed).\\
One can show that thanks to iteration of item (ii) $\Lambda({\bf g}_{n-1},\de') = (n-i)$. Note that the procedure may imply that some vertices with a non-hat index in $\Gamma_{\la,\nu}(\de)$ are eventually relabelled with a hat index and vice-versa. 
\begin{exm}
Figure \ref{fig : EdRe} illustrates the application of the edge removal procedure to a labelled matchings in $\wit{\GG}^{(4,2)}_{[2^3]}$. Note that among the remaining vertices, the one indexed $\wh{2}$ (resp. the one indexed $3$) in the original labelled matching on the left-hand side is relabelled with a non-hat (resp. hat) index.
\begin{figure}[htbp]
\begin{center}
 \includegraphics[scale=0.25]{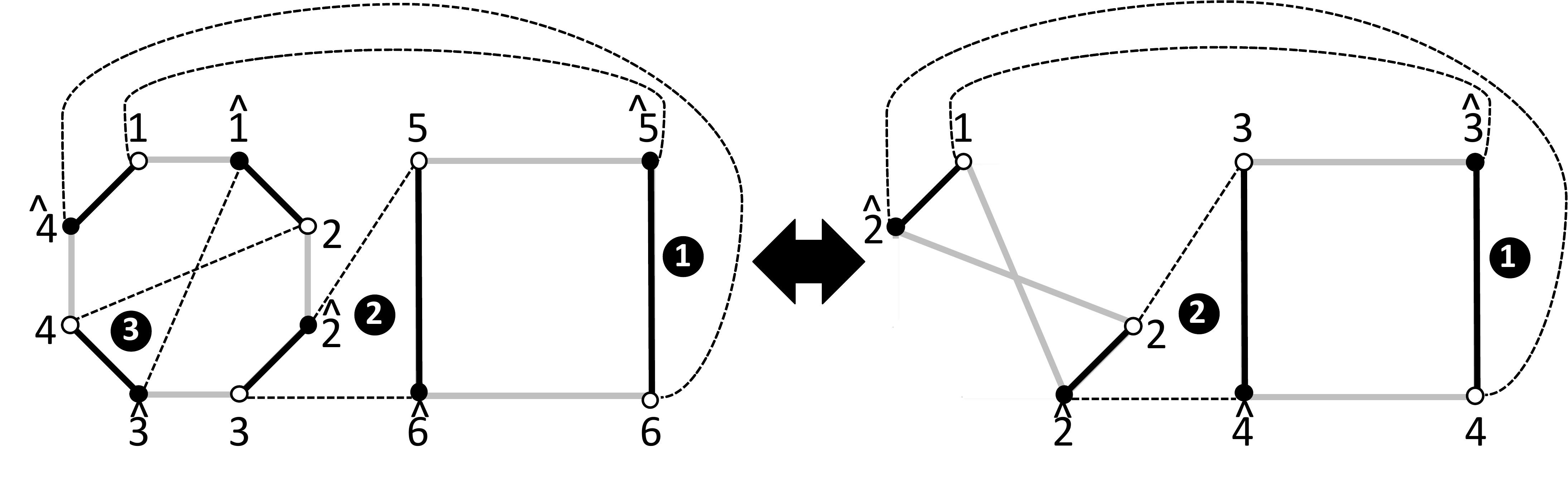}
 \caption{Application of the multi-edge deletion procedure to a labelled matching $\de \in \wit{\GG}^{(4,2)}_{[2^3]}$. The resulting labelled matchings $\de'$ belongs to $\wit{\GG}^{(2,2)}_{[2^2]}$}
\label{fig : EdRe} \end{center}
 \end{figure}
\end{exm}
Suppose now the partition $\nu$ is of the form $\nu = [1^k2^l3^mn^1]$ where $n>3$. When $k=l=m=0$, the definition of the weight function $\wt$ and the proof of equality  $$\wit{a}^\la_{n,n}(1+\be) = \mathfrak{S}_{n}^\la (\beta)$$
is provided in \cite{KanVas16}. We proceed by defining $\wt$ in the cases $i=1,2,3$ and by proving that if the equality
\begin{equation}
\wit{a}^\la_{n,\rho}(1+\be) = \mathfrak{S}_{\rho}^\la (\beta)
\label{eq : arho}
\end{equation}
is true for any partition $\la \vdash n$ and for some partition $\rho \vdash n$ with all its parts strictly greater than $i$ then the same equality is true if we replace $\rho$ by $\rho\cup[i^j]$ where $j\geq 0$ and $n$ by $n+ij$. It will be more convenient to give the proof first for $i=1$ and then increase the value of $i$.

\subsubsection{Case $i=1$}

Assume Equation (\ref{eq : arho}) is true for some partition $\rho$ such that $1\notin \rho$. We show that this equality remains true if we replace $\rho$ by $\rho \cup [1^k]$.\\
Indeed, let $\delta$ be a labelled matching of $\wit{\GG}^{\la}_{\rho \cup [1^k]}$. The edges of $\bar{\delta}$ belonging to a cycle of length $2$ of ${\bf b}_\la \cup \bar{\delta}$ are always bipartite (they link vertices $u$ and ${\bf b}_\la(u)$ for some index $u$). If we further suppose that the two vertices to such an edge belongs to a cycle of length $2\la_i$ in ${\bf b}_\la\cup{\bf g}_n$, one obtains a labelled matching $\delta' \in \wit{\GG}^{\la_{\da(\la_i)}}_{\rho \cup [1^{k-1}]}$ by removing it. The edge is bipartite and we define
$$\wt (\de) = \wt(\de').$$
The successive removal of all the edges belonging to the cycles of length $2$ of ${\bf b}_\la \cup \bar{\delta}$ becomes bijective if one keeps track of the non-hat index among the two incident vertices. One has
\begin{align*}
\mathfrak{S}_{\rho\cup[1^k]}^\la (\beta)=\sum_{\de\in \wit{\GG}^\la_{\rho\cup [1^k]}} \be^{\wt(\de)}&=\sum_{\substack{\kappa=(k_1,\ldots, k_p)\\0\leq k_i<\la_i,\\ \sum_i k_i = k }}
\binom{\la_1}{k_1}\ldots \binom{\la_p}{k_p} \sum_{\de' \in \wit{\GG}^{\la - \kappa}_{\rho} }\be^{\wt(\de')}\\
&=\sum_{\substack{\kappa=(k_1,\ldots, k_p)\\0\leq k_i<\la_i,\\ \sum_i k_i = k }}
\binom{\la_1}{k_1}\ldots \binom{\la_p}{k_p} \wit{a}^{\la-\kappa}_{n,\rho}(1+\beta)\\
&= \wit{a}^{\la}_{n,\rho\cup[1^k]}(1+\be),
\end{align*}
where the last equality is given by Equation (\ref{rhon}) and the previous one is the recurrence hypothesis.

\subsubsection{Case $i=2$} 

Given an integer partition $\la$, iterating twice the edge removal procedure on a labelled matching $\de$ in $\wit{\GG}^\la_{\rho\cup[2^{l}]}$ where $l\geq 1$ and $\rho$ is such that $1,2\notin \rho$ yields a new labelled matching $\de'$ in $\wit{\GG}^{\la'}_{\rho\cup[2^{l-1}]}$ for some partition $\la'$ (we assume that either $\rho \neq \emptyset$ or $\rho = \emptyset$ and $l>1$ otherwise the problem reduces to a known case of \cite{KanVas16}). We define three bijective constructions for labelled matchings $\de$ in $\wit{\GG}^\la_{\rho\cup[2^{l}]}$ based on this procedure that depend on the properties of the cycle of length $4$ in ${\bf b}_\la \cup \de$ with label $l$ that we denote $\Box_l$. In the sequel, we say that $\Box_l$ is bipartite if its four incident vertices $u,v,\delta(u),\delta(v)$ are such that both $(u,\de(u))$ and $(v,\de(v))$ are bipartite. 
Define:
$$\wt(\de):=
\begin{cases} {\wt}({\de'}) & \text  { if } \Box_l \text{ is bipartite},\\
{\wt}({\de'})+1 & \text { otherwise. }
\end{cases}$$

We have three cases.

\begin{itemize}
\item $\Box_l$ is not bipartite and its black edges lie inside the $i$-th cycle of length $2\la_i$ in ${\bf g}_{2m}\cup{\bf b}_\la$.  In this case, one can show that $\la_i>2$, $\la'=\la_{\da\da\la_i}$ and there is a bijection between such labelled matchings in $\wit{\GG}^\la_{\rho\cup[2^{l}]}$ and triples $(\de', a,b)$ where $\de' \in \wit{\GG}^{\la_{\da\da\la_i}}_{\rho\cup[2^{l-1}]}$ and $a$ and $b$ are the non-hat labels in $\Box_l$ (we have $\sum_j^{i-1}\la_j\leq a<b \leq \sum_j^{i}\la_j$). In this case $\wt(\de) = 1+\wt(\de')$. 

\item $\Box_l$ is bipartite and its black edges lie inside the $i$-th cycle of length $2\la_i$ in ${\bf g}_{2m}\cup{\bf b}_\la$. We have $\la_i>3$, $\la'=\la^{\ua\ua(\la_i-d-2,d)}$ for some $d$ and there is a bijection between such labelled matchings in $\wit{\GG}^\la_{\rho\cup[2^{l}]}$ and pairs $(\de', c)$ where $\de' \in \bigcup_d\wit{\GG}^{\la^{\ua\ua(\la_i-d-2,d)}}_{\rho\cup[2^{l-1}]}$ and, if $a<b$ are the non-hat labels in $\Box_l$, $c = a$ if $b-a\leq \la_i/2$ and $c = b$ otherwise. In this case $\wt(\de) = \wt(\de')$. 

\item  The black edges of $\Box_l$ lie in two different cycles, namely the $i$-th and the $j$-th one. In this case, $\la_i+\la_j>2$, $\la'=\la_{\da\da(\la_i,\la_j)}$ and there is a bijection between such labelled matchings in $\wit{\GG}^\la_{\rho\cup[2^{l}]}$ and quadruples $(\de', a,b,i)$ where $\de' \in \wit{\GG}^{\la_{\da\da(\la_i,\la_j)}}_{\rho\cup[2^{l-1}]}$, $a$ and $b$ are the non-hat labels in $\Box_m$ and $i\in\{b,nb\}$ indicates whether $\Box_l$ is bipartite or not (the two cases are always possible). In this case $\wt(\de) = \wt(\de')$ if $i=b$ and $\wt(\de) = 1+\wt(\de')$ otherwise. 

\end{itemize}

Using these three bijections (see Figure \ref{fig : bijg2m} for an illustration), one gets:
\begin{align*}
\mathfrak{S}^\la_{\rho\cup[2^{l}]}(\beta) &= \sum_{\substack{i\geq 1\\ \la_i>2}}\hspace{-0mm}\binom{\la_i}{2}\hspace{-2mm}\sum_{\de' \in \widetilde{\GG}^{{\la}_{\da\da(\la_i)}}_{\rho\cup[2^{l-1}]}}\hspace{-2mm}\beta^{1+\wt(\de')}\\
&+\frac 1 2 \sum_{i,d\geq 1}\la_i \hspace{-2mm}\sum_{\de' \in \widetilde{\GG}^{\la^{\ua\ua(\la_i-d-2,d)}}_{\rho\cup[2^{l-1}]}}\hspace{-6mm}\beta^{\wt(\de')}+\hspace{-2mm}\sum_{\substack{i<j\\ \la_i+\la_j>2}}\hspace{-0mm}\la_i\la_j\hspace{-4mm}\sum_{\de' \in \widetilde{\GG}^{\la_{\da\da(\la_i,\la_j)}}_{\rho\cup[2^{l-1}]}}\hspace{-4mm}\left(\beta^{\wt(\de')}+\beta^{1+\wt(\de')}\right)
\end{align*}
As a result, $\mathfrak{S}^\la_{\rho\cup[2^l]}(\beta) = \wit{a}^\la_{n+2l,\rho\cup[2^l]}(1+\beta)$ for all $l\geq 1$ and any $\la \vdash n+2l$. 
\begin{figure}[htbp]
\begin{center}
\subfigure[First bijection.]
{\includegraphics[scale = 0.17]{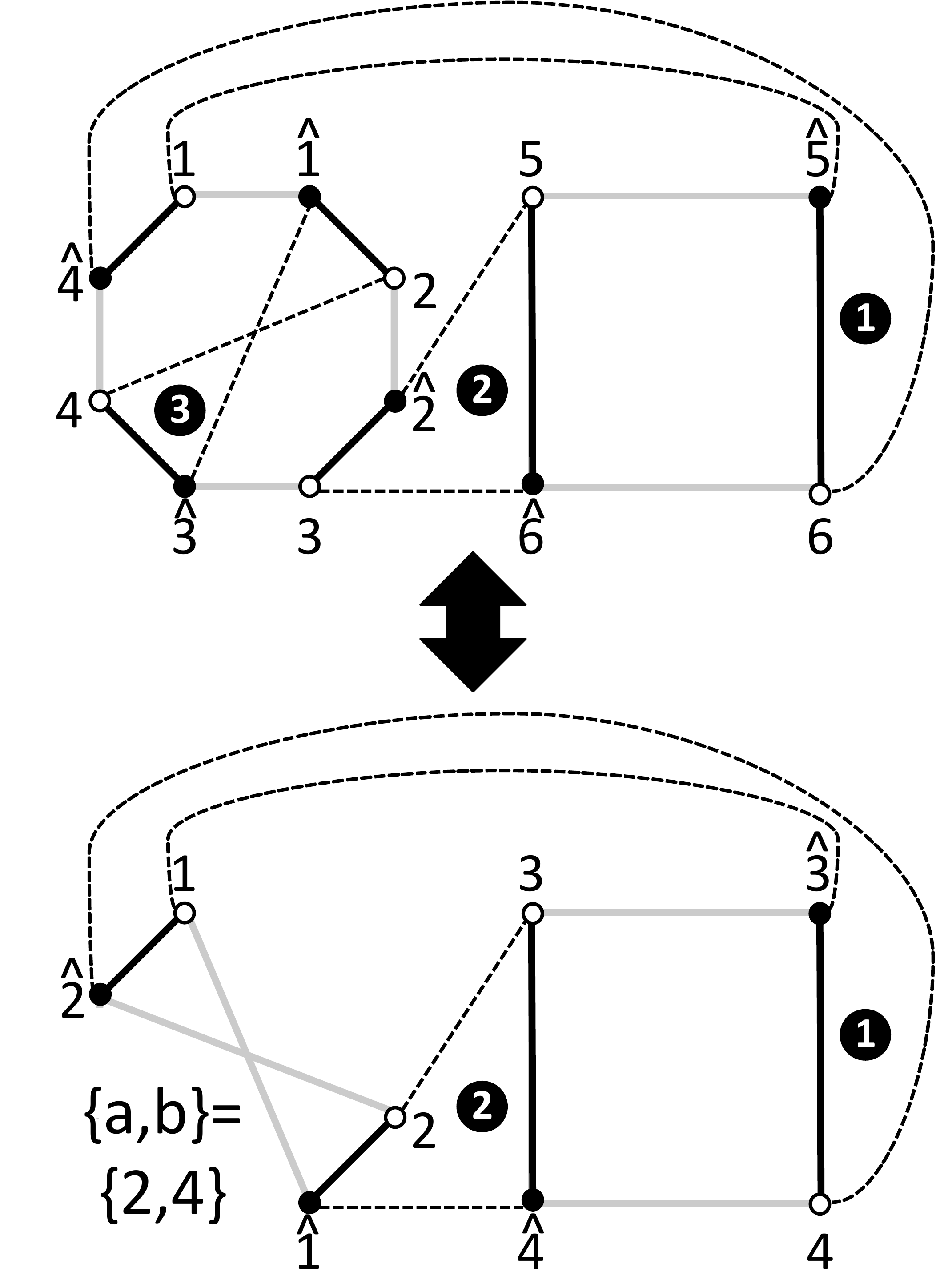}}\qquad
\subfigure[Second bijection.]
{\includegraphics[scale = 0.17]{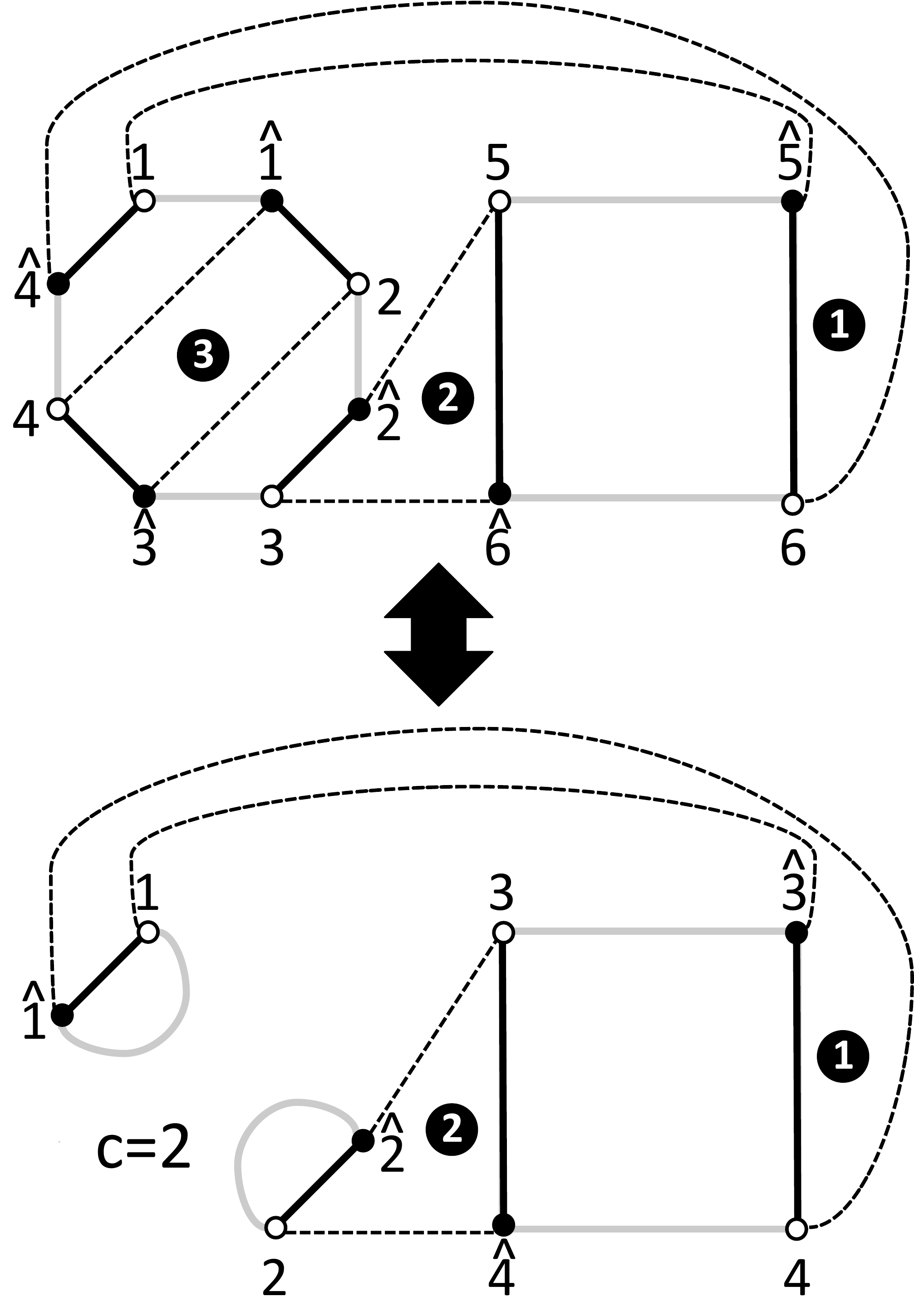}}\qquad
\subfigure[Third bijection.]
{\includegraphics[scale = 0.17]{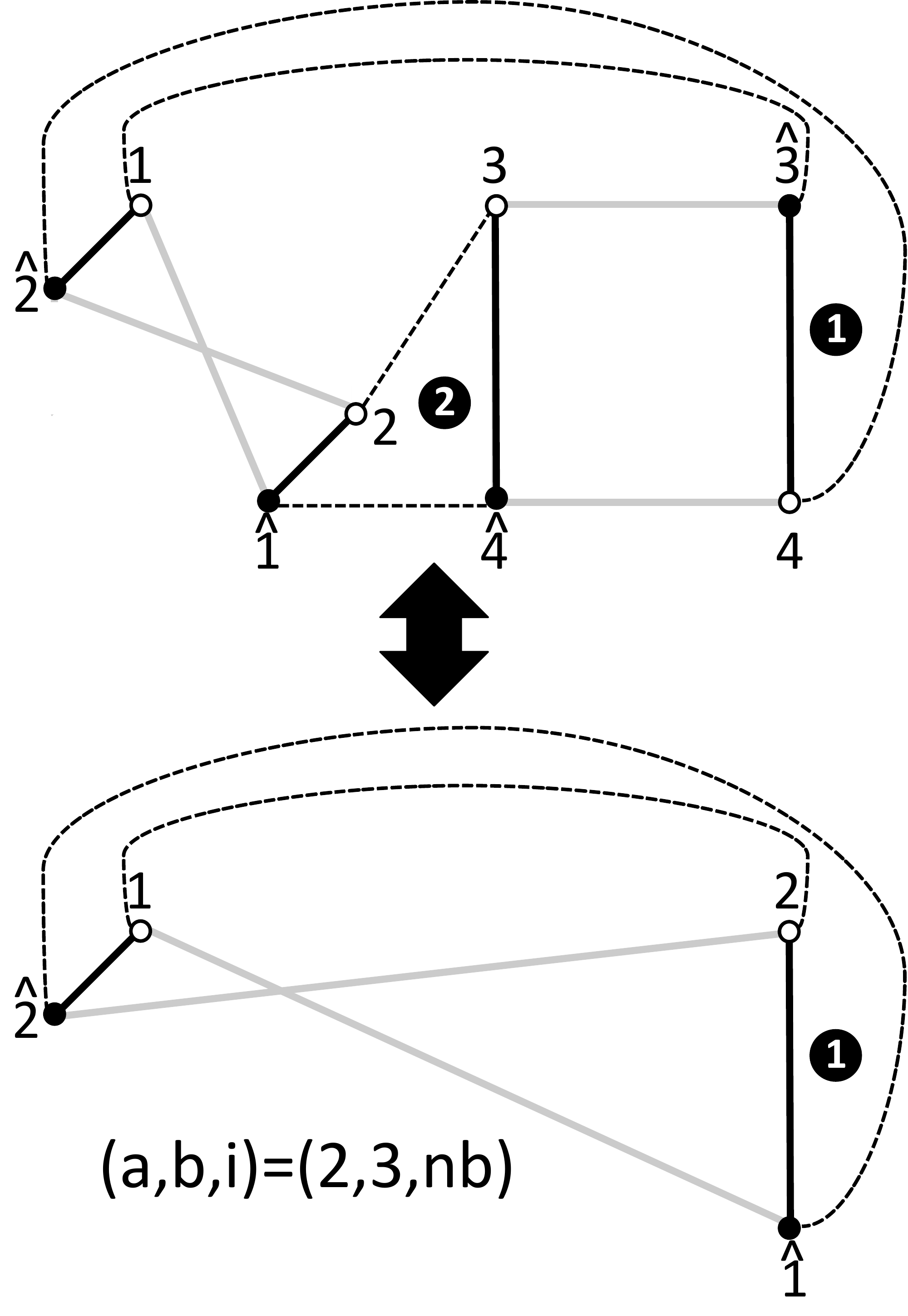}}\caption{Examples of application of the three bijections for labelled matchings.}
\label{fig : bijg2m}
\end{center}
\end{figure}

\subsubsection{Case $i=3$}

\begin{lem}
If Theorem \ref{thm : A} holds for $k=l=m=0$ then it holds for $k=l=0$ and any $m$. 
\end{lem}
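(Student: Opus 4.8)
The plan is to mirror, in the $3$-reduction setting, exactly the strategy already carried out for $i=1$ and $i=2$: we will construct a weight function $\wt$ on $\wit{\GG}^\la_{\rho\cup[3^m]}$ recursively (via iterating the edge-removal procedure three times), and then show that $\mathfrak{S}^\la_{\rho\cup[3^m]}(\beta)=\sum_{\de\in\wit{\GG}^\la_{\rho\cup[3^m]}}\beta^{\wt(\de)}$ satisfies precisely the recurrence of Lemma~\ref{rec3a} for $\wit a^\la_{n+3m,\rho\cup[3^m]}(1+\beta)$. Here, since $k=l=0$, the partition $\rho$ is empty and we are reducing a partition entirely composed of $3$'s. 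We proceed by induction on $m$: the base case $m=1$ amounts to checking $\wit a^\la_{3,(3)}(1+\beta)=\mathfrak{S}^\la_{(3)}(\beta)$ for the (few) partitions $\la\vdash 3$, which can be done by direct inspection of the small-$n$ tables together with the explicit matchings in $\wit{\GG}^\la_{(3)}$; for the inductive step, we take the hexagon $\mathit{Hex}_m$, the $6$-cycle in ${\bf b}_\la\cup\bar\de$ carrying the largest label $m$, and classify labelled matchings $\de\in\wit{\GG}^\la_{[3^m]}$ according to how the three black edges of $\mathit{Hex}_m$ are distributed among the cycles of ${\bf g}_{3m}\cup{\bf b}_\la$ and according to which of these black edges are bipartite.

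The combinatorial core is the case analysis. The three black edges of $\mathit{Hex}_m$ can lie (a) all three inside one cycle of ${\bf g}_{3m}\cup{\bf b}_\la$, (b) two inside one cycle and one inside another, or (c) one in each of three distinct cycles; and within each configuration we record how many of the black edges fail to be bipartite. Each such subcase must be matched, via an explicit bijection built from the edge-removal procedure plus a bounded amount of auxiliary data (the non-hat labels of the vertices of $\mathit{Hex}_m$, and a bit/choice indicating which edges are bipartite where both possibilities occur), to one of the six summands~\eqref{s1}--\eqref{s6} of Lemma~\ref{rec3a}. The coefficients appearing there — $2\beta^2+\beta+1$, $2\beta^2+4\beta+2$, $\beta^2+\beta$, $\beta+1$, $\beta$, and the pure counting term $\tfrac13\sum$ — dictate exactly how the weight should increment in each subcase: e.g. the term $2\beta^2+4\beta+2=2(\beta+1)^2$ for the ``three distinct cycles'' configuration forces $\wt(\de)=\wt(\de')+j$ where $j\in\{0,1,2\}$ counts the non-bipartite black edges of $\mathit{Hex}_m$, with a multiplicity-$2$ choice independent of orientability. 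Defining $\wt$ so that these increments hold, and checking that $\mathit{Hex}_m$ is ``fully bipartite'' (hence $\wt(\de)=\wt(\de')$) precisely when no increment occurs, will give $\wt(\de)=0\iff\de$ bipartite by induction.

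Once the weight function is pinned down, the proof reduces to verifying that summing $\beta^{\wt(\de)}$ over $\de\in\wit{\GG}^\la_{[3^m]}$, broken up along the classification above and re-expressed through the bijections in terms of $\de'\in\wit{\GG}^{\la'}_{[3^{m-1}]}$, reproduces term-by-term the right-hand side of Lemma~\ref{rec3a} with $\rho=\emptyset$. The multiplicities $\binom{\la_i}{3}$, $\la_i\la_j\la_k$, $\la_i\la_j(\la_i+\la_j-2)$, $\binom{\la_i}{2}$, etc., should emerge as counts of ways to recover the deleted hexagon: choosing which non-hat labels sit at its vertices, and which of the $2\la_i$ (resp.\ $2\la_j$) positions in each relevant cycle the reattached black path occupies; the $\tfrac13$ and $\tfrac12$ prefactors come from over-counting due to the cyclic/reflective symmetry of the hexagon and from unordered pairs of parts. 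Then $\mathfrak{S}^\la_{[3^m]}(\beta)$ and $\wit a^\la_{3m,[3^m]}(1+\beta)$ satisfy the same recurrence with the same base case, so they are equal; and since the recurrence never decreases the $\beta$-degree and the weights are bounded by the number of deleted non-bipartite edges, one also reads off $\wt\colon\wit{\GG}^\la_{[3^m]}\to\{0,1,\dots,3m-m\}=\{0,\dots,n-\ell(\nu)\}$, giving the statement of Theorem~\ref{thm : A} for $k=l=0$ and all $m$.

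The step I expect to be the main obstacle is establishing the bijections in configuration~(a), where all three black edges of the hexagon lie in a single cycle of ${\bf g}_{3m}\cup{\bf b}_\la$: there the edge-removal procedure can turn a length-$2\la_i$ cycle into a single shorter cycle (giving $\la_{\da\da\da\la_i}$, term~\eqref{s1}), or split it into two cycles (term~\eqref{s6}, and the three-way split in~\eqref{s3}), depending on the cyclic positions of the three black edges, and one must check carefully that the auxiliary data recorded (positions $d,f$ and the non-hat labels) exactly parametrizes the preimages with the stated multiplicities and the stated weight increments — including correctly accounting for the hexagon's own symmetry so that no preimage is counted twice and the $\tfrac13$ factor is right. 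Keeping track of which relabelled vertices acquire or lose hats during the three successive deletions, and ensuring the bipartiteness bookkeeping is consistent across all three deleted edges, is where the argument is most delicate.
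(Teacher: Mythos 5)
Your proposal follows essentially the same route as the paper's proof: induction on $m$, classification of the highest-labelled hexagon according to how its three black edges are distributed among the cycles of ${\bf g}\cup{\bf b}_\la$ and which of them are bipartite, bijections built from the edge-removal procedure with auxiliary data recording the non-hat labels and twist/bipartiteness choices, and weight increments $w\in\{0,1,2\}$ per configuration engineered to reproduce the six coefficients of Lemma~\ref{rec3a}, so that $\mathfrak{S}^\la_{\rho\cup[3^m]}$ and $\wit{a}^\la_{\cdot,\rho\cup[3^m]}(1+\be)$ satisfy the same recurrence and base case. The only slip is the reading of the indices: with $k=l=0$ the residual partition $\rho$ is the single large part $(n)$ with $n>3$ (so the base case $m=0$ is exactly the hypothesis, i.e.\ the $\nu=(n)$ result of \cite{KanVas16}) rather than $\rho=\emptyset$; this does not affect the substance, since Lemma~\ref{rec3a} is stated for any $\rho$ with no parts $\le 3$.
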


\begin{proof} The statement follows from the recurrence formula in Lemma \ref{rec3a} by induction on $m$ like in the case of twos. For each hexagon 
$\hex$ in a $\la$-graph $G$, we fix a  bijection between labelled matchings from 
$\wit{\GG}^\la_{N,\rho\cup[3^m]}$ with $m$-th hex $\hex_m=\hex$ and labelled matchings from 
$\wit{\GG}^\la_{N-3,\rho\cup[3^{m-1}]}$ on the graph obtained from $G$ by removing of $\hex$.
 Let $\de\in \wit{\GG}^\la_{N,\nu}$ be a given labelled matching   and $\de'$ is obtained from $\de$ by deleting the last $m$-th hexagon $\hex=\hex_m$.
In each case, we define 
 $$\wt(\de):=\wt(\de')+w$$ where {\bf adding weight} $w\in \{0,1,2\}$ and $w=0$ iff $\hex$ is bipartite else $w=1$ or $2$ in such way that the common contribution of all hexagons of considering type be equal to the coefficient at the corresponding summand (\ref{s1})--(\ref{s6}).
 We explain the combinatorial sence of each summand and its contribution to the sum $\sum_{\de} \be^{\wt(\de)}$ depending on the position of $\hex$. 
 
\begin{itemize}
\item {\bf Summand (\ref{s1})} (Figure \ref{fig : s1}): black edges of $\hex$ lie in the same $\la_i$ component 
of $G$ for some $\la_i>3$, and these edges are connected  in such way that $\la'=\la_{\da\da\da (\la_i)}$.  There exist $\binom{\la_i}{3}$ triples of edges of the $\la_i$-component and, for each of them, there exist four such hexagons. One of them is bipartite so we define $\wt(\de)=\wt(\de')$. To {\it any one} of another hexagons, we assign a weight  $\wt(\de')+1$. The matchings with two last hexagons get weights  $\wt(\de')+2$. So the common contribution is $a_{33}^3=2\be^2+\be+1$.

\begin{figure}[htbp]
\begin{center}
\includegraphics[scale=0.8]{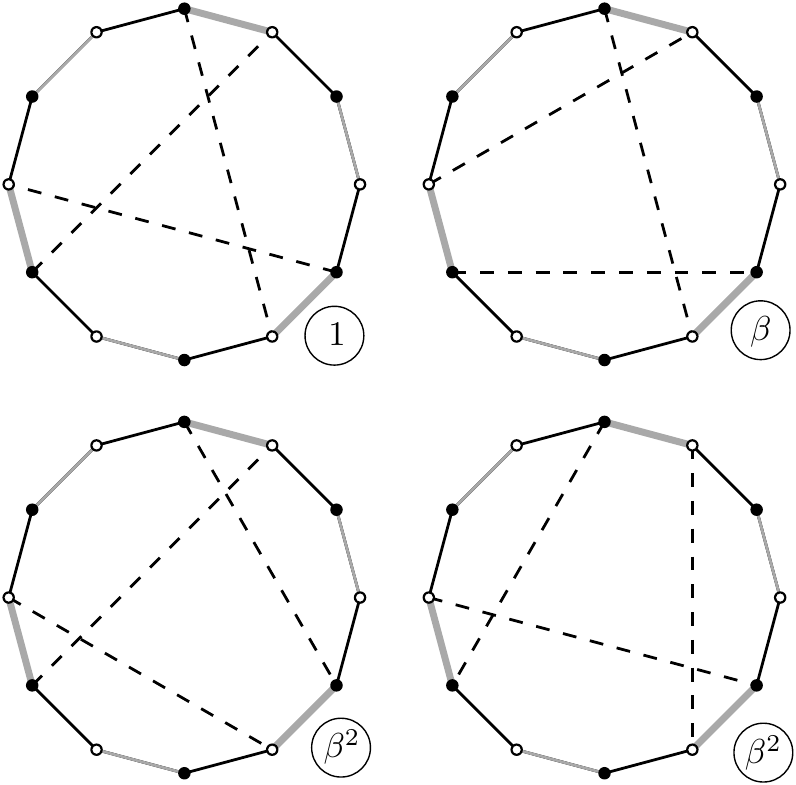} 
\caption{Hexagons contributing to summand (\ref{s1}).} \label{fig : s1}
\end{center}
\end{figure}

\item {\bf Summand (\ref{s2})} (Figure \ref{fig : s2}): black edges of $\hex$ lie in three different components, say in $\la_i,\la_j,\la_k$, $i<j<k$. (Of course $\la_i+\la_j+\la_k>3$ else $\de\cup${\bf b} cannot be a cycle.) Clearly there are $(2\la_i)(2\la_j)(2\la_k)$ such hexagons and exactly $(2\la_i)\la_j\la_k$ among them are bipartite. For fixed three edges  
there exist eight hexagons on them, besides exactly two of these hexagons are bipartite. So we assign adding weights $0,0,1,1,1,1,2,2$ to matchings with these hexagons according to the coefficient 
$2\be^2+4\be+2$ in (\ref{s2}).

\begin{figure}[htbp]
\begin{center}
\includegraphics[scale=0.8]{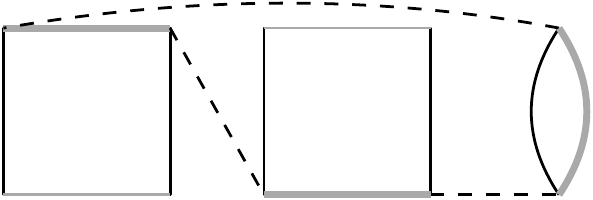}
\caption{Example of hexagon contributing to summand (\ref{s2}).} \label{fig : s2}
\end{center}
\end{figure}

\item {\bf Summand (\ref{s3})} (Figure \ref{fig : s3}): black edges of $\hex$ are bipartite and lie in the same component of $\de$ (say, $\la_i$) besides they are not neighbours and their nearest vertices are connected. This case is similar to the second subcase from the case $i=2$.

\begin{figure}[htbp]
\begin{center}
\includegraphics[scale=0.7]{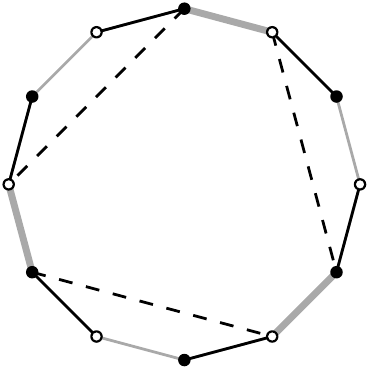} 
\caption{Example of hexagon contributing to summand (\ref{s3}).}\label{fig : s3}
\end{center}
\end{figure}

\item {\bf Summand (\ref{s4})} (Figure \ref{fig : s4}): two black edges (call $ab$ and $cd$) of $\hex$ are from the one $\la$-component and the third black edge (call $ef$) is from another one, besides exactly two one-colored vertices from the first component are connected in $\de$, i.~e. these six vertices are connected in $\de$ in one of four ways:
$\{ad, be,cf\}$, $\{ad,bf,ce\}$, $\{bc, ae,df\}$, $\{bc,af,de\}$. One can choose these three $\la$-edges in
$\binom{\la_i}{2} \la_j+\binom{\la_j}{2} \la_i=\frac{\la_i\la_j(\la_i+\la_j-2)}{2}$ ways for fixed $i<j$ (besides $\la_i+\la_j>2$). We distribute their adding weights as $1,1,2,2$ in order to the common contribution be equal $2\be^2+2\be$.

\begin{figure}[htbp]
\begin{center}
\includegraphics[scale=0.7]{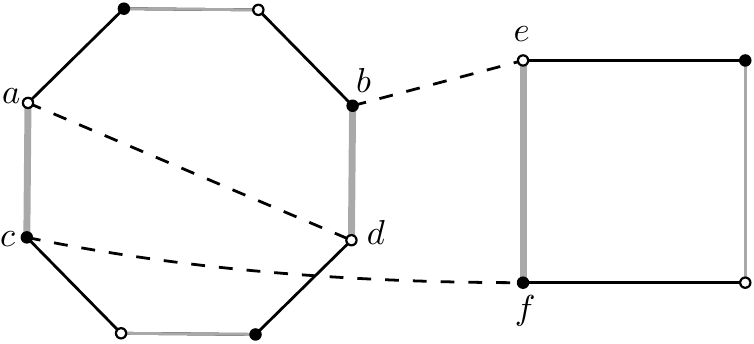} 
\caption{Example of hexagon contributing to summand (\ref{s4}).}\label{fig : s4}
\end{center}
\end{figure}

\item {\bf Summand (\ref{s5})} (Figure \ref{fig : s5}). We fix $i<j$ and $d\in \{1,\ldots, \la_i+\la_j-4\}$. Firstly, we choose a vertex $a$ from $\la_i$-component and a vertex $b$ from $\la_j$-component (we can do it in $(2\la_i)(2\la_j)$ ways) and remove the edge $\{a,b\}$ getting $(\la_i+\la_j-1)$-component. Let $\{a,a'\}, \{b,b'\}\in {\bf b}$.
Secondly, we skip $d$ gray edges in this connected component from the vertex $b'$ in the direction $a'\to b'$ (on Figure \ref{fig : s5} $d=2$). So we get the edge $\{c,c'\}\in {\bf b}$ such that the edge $\{b', c\}$ is bipartite and we have the hexagon $abb'cc'a'$. Note that the same hexagon can be obtained by choosing $a'$ and $c'$ (instead of $a$ and $b$) 
therefore we divide by 2 the number $(2\la_i)(2\la_j)$ of ways to choose two starting vertices. Clearly, the half of all hexagons of this type are non-bipartite so we define their adding weights as 1.

\begin{figure}[htbp]
\begin{center}
\includegraphics[scale=0.7]{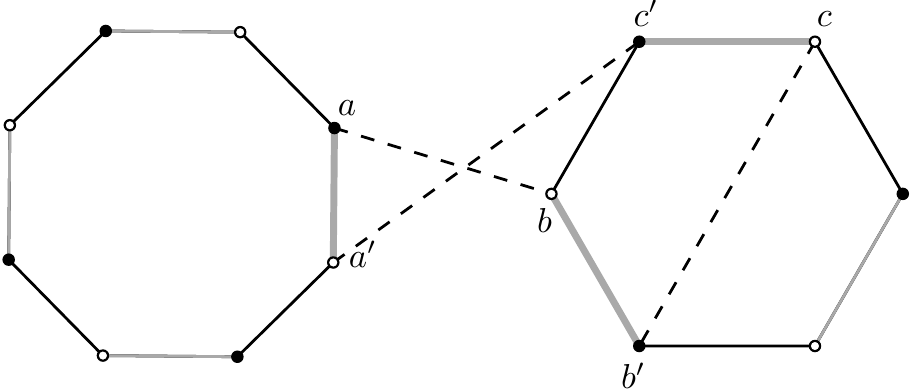}
\caption{Example of hexagon contributing to summand (\ref{s5}).} \label{fig : s5}
\end{center}
\end{figure}

\item {\bf Summand (\ref{s6})} (Figure \ref{fig : s6}, left): the $\la$-edges of $\hex$ lie in one component, say in $\la_i$, besides just one from $\de$-edges of $\hex$ is bipartite. We choose a pair of black edges in $\la_i$-component ($1\wh{1}$ and $4\wh{4}$ on Figure \ref{fig : s6}, left) and connect their black vertices or white vertices (we can do it in $2\binom{\la_i}{2}$ ways) and get $(\la_i-1)$-component in which we have to remove a bipartite square (after recoloring) with fix gray edges (the images of $2\wh{1}$ and $4\wh{5}$ after removing $14$). For obtaining the hex of suitable type in this case, we need to choose one more black edge of $\la_i$-component. This choice is equivalent to the choice of $d\in \{1,\ldots, \la_i-4\}$. Note that the same hexagon is obtained by the choice of one more pair of its $\la$-edges 
(see Figure \ref{fig : s6}) so we divide the number $2\binom{\la_i}{2}$ by 2. All such hexagons get an adding weight $1$.

\begin{figure}[htbp]
\begin{center}
\includegraphics[scale=0.7]{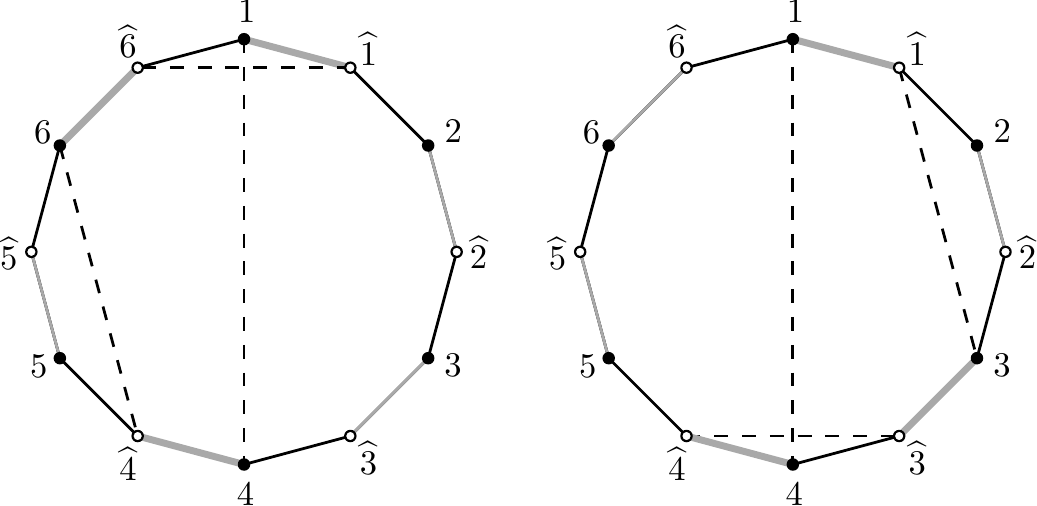}
\caption{Both hexagons for summand at $d\in \{1,2\}$ with edge $14\in \de$. The left (resp. right) one can be obtained by the choice of the pair $\{1\wh{1}, 4\wh{4}\}$ or $\{1\wh{1}, 6\wh{6}\}$  
(resp. $\{1\wh{1}, 4\wh{4}\}$ or $\{3\wh{3}, 4\wh{4}\}$).} \label{fig : s6}
\end{center}
\end{figure}
\end{itemize}
\end{proof}

We finish the proof of Theorem \ref{thm : A} by noting that the inequality $\wt(\de)\leq N-\ell(\nu)$ holds for all $\la,\nu\vdash N$ and $\de\in \wit{\GG}^\la_{N,\nu}$. Indeed, 
by assumption it holds for $k=l=m=0$. As one can see from the proof the adding weight of $\de$ is increasing no more $d-1$ when we add a part $d\in \{1,2,3\}$ to the partition $\nu$.

\subsection{Proof of Theorem \ref{thm : H}}
This section is dedicated to the proof of Theorem  \ref{thm : H}. We adapt the definition of the {\it non-orientability} measure $\vartheta$ for hypermaps introduced by Lacroix \cite{Lac09} and use some bijective constructions on labelled star hypermaps to prove that the polynomials $\Sigma^\la_{[k^m]}(\beta)$ defined as $$\Sigma^\la_{[k^m]}(\beta) = \sum_{M \in \widetilde{\mathcal{L}}^\la_{[k^m]}}\be^{\vartheta(M)}$$ verify the same recurrence relation as the coefficients $\wit{h}^\la_{n, [k^m]}(1+\beta)$. Although it shares some similarities with the proof of Theorem  \ref{thm : A}, the differences in the combinatorial objects involved (hypermaps instead of matchings) render the proof of Theorem \ref{thm : H} independent. We first define the measure of non-orientability used throughout the section. Then we cover the case $k=n$ as it is required to prove the theorem for the other values of $k$.
\subsubsection{Measure of non-orientability for labelled star hypermaps}
We adapt the definition of the measure of non-orientability introduced by Lacroix in \cite[Definition 4.1]{Lac09} to the case of labelled hypermaps. In what follows, we name {\bf a leaf} an edge connecting a black vertex of degree $1$ and the white vertex.
\begin{defn}[Measure of non-orientability for labelled star hypermaps]
To any labelled star hypermap $M$ of face distribution $\la \neq \emptyset$, we associate a labelled star hypermap $M'$ of face distribution $\la'$ obtained by 
\begin{itemize}
\item deleting the root edge,
\item defining the new root as the edge labelled with $2$ in M,
\item relabelling all the remaining edges by its label in M minus $1$. 
\end{itemize}
The procedure described above is called the {\bf root deletion process}. Following Lacroix, define recursively the function $\vartheta$ on labelled star hypermaps as: 
\begin{itemize}
\item If the root of $M$ is a leaf then $\vartheta(M) = \vartheta(M')$ 
\item Otherwise, the root of $M$ is not a leaf. We have $|\ell(\la')-\ell(\la)| \leq 1$ and:
\begin{itemize}
\item if $\ell(\la') = \ell(\la)$ the root of M is a {\bf cross-border} and $\vartheta(M) = 1+\vartheta(M')$, 
\item if $\ell(\la') = \ell(\la)-1$ the root of M is a {\bf border} and $\vartheta(M) = \vartheta(M')$,
\item if $\ell(\la') = \ell(\la)+1$ the root of M is a {\bf handle}. In this case, there is a second
hypermap $\tau(M)$ obtained from $M$ by twisting the ribbon associated with its root. The root of $\tau(M)$ is also a handle and deleting it from $\tau(M)$ also produces $M'$. Define

 $$\{\vartheta(M), \vartheta(\tau(M))\} = \{\vartheta(M'),1+\vartheta(M')\}.$$ 

At most one of $M$ and $\tau(M)$ is orientable, and any canonical choice such that if $M$ is orientable, then $\vartheta(M) = 0$ and $\vartheta(\tau(M))$ = 1 is acceptable.
\end{itemize}
\end{itemize}
If $M$ is the empty hypermap of face distribution $\la = \emptyset$, define $\vartheta(M) = 0$. 
\end{defn}
\begin{rem}
Some details about the topological meaning of the three types of edges (cross-border, border or handle) can be found in \cite[Remark 4.3]{Lac09}. Note that in the case of star hypermaps, the connectivity of the map is not altered after deletion of an edge and such maps do not contain bridges (except the degenerate case of leaves).
\end{rem}
\begin{exm} Figure \ref{fig : NOM} shows the first iterations of the root deletion process applied to the labelled star hypermap \ref{fig : NOM1}.
\end{exm}
\begin{figure}[htbp]
\begin{center}
\subfigure[A labelled star hypermap with two faces $f_1$ and $f_2$. \label{fig : NOM1}]{\includegraphics[scale = 0.2]{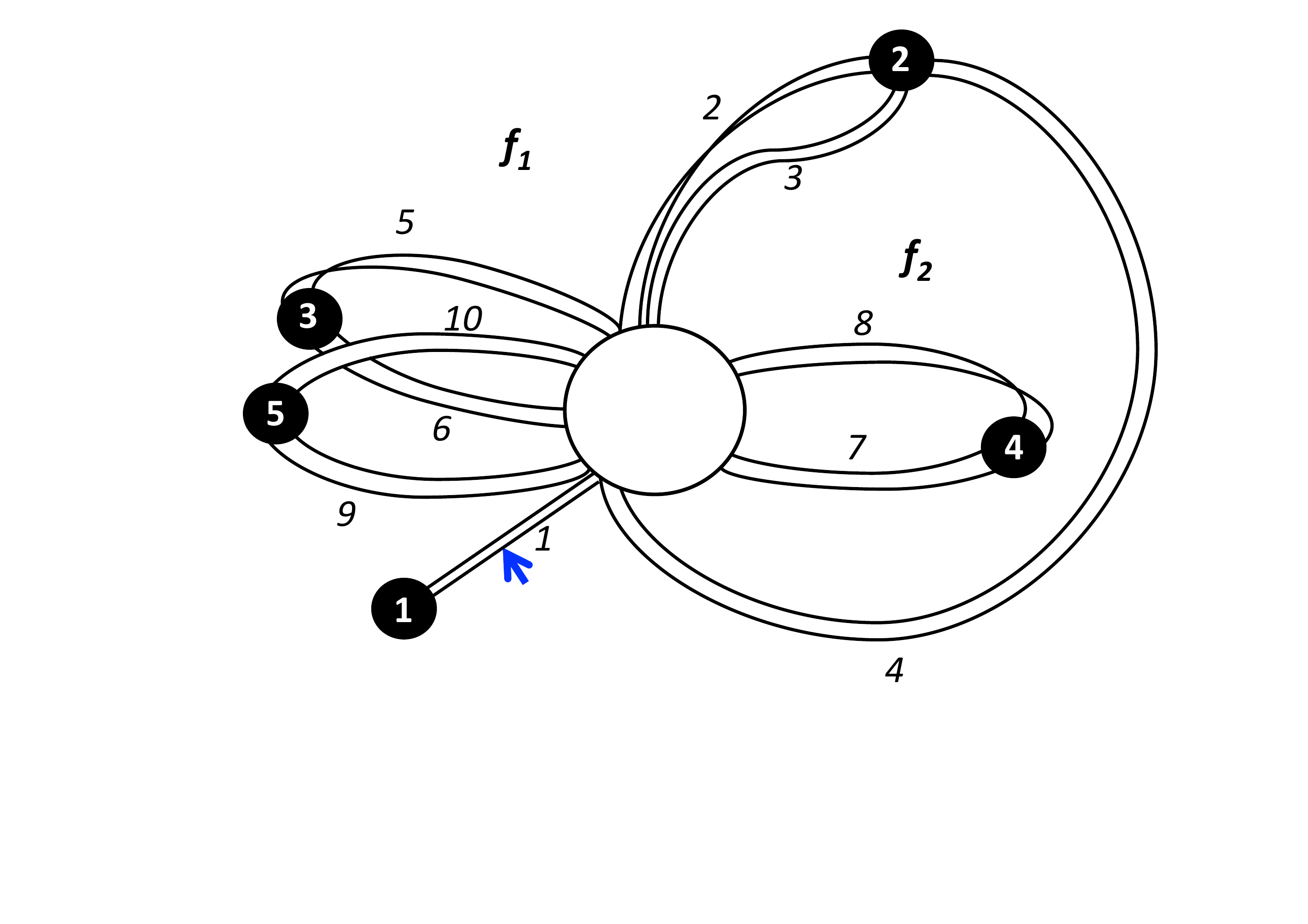}}\qquad
\subfigure[Leaf deletion. \label{fig : NOM2}]{\includegraphics[scale = 0.2]{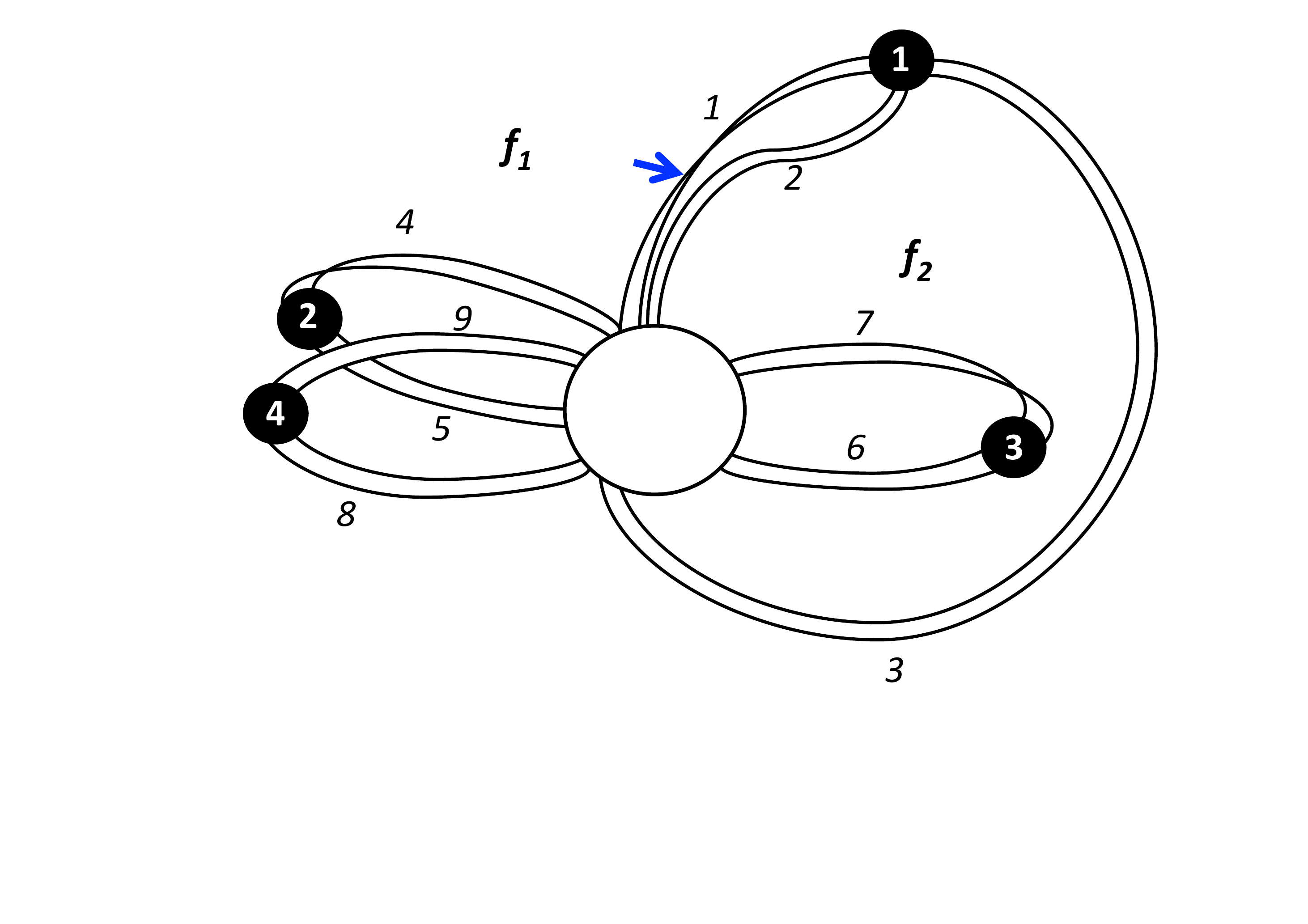}}
\subfigure[Cross border deletion.\label{fig : NOM3}]{\includegraphics[scale = 0.2]{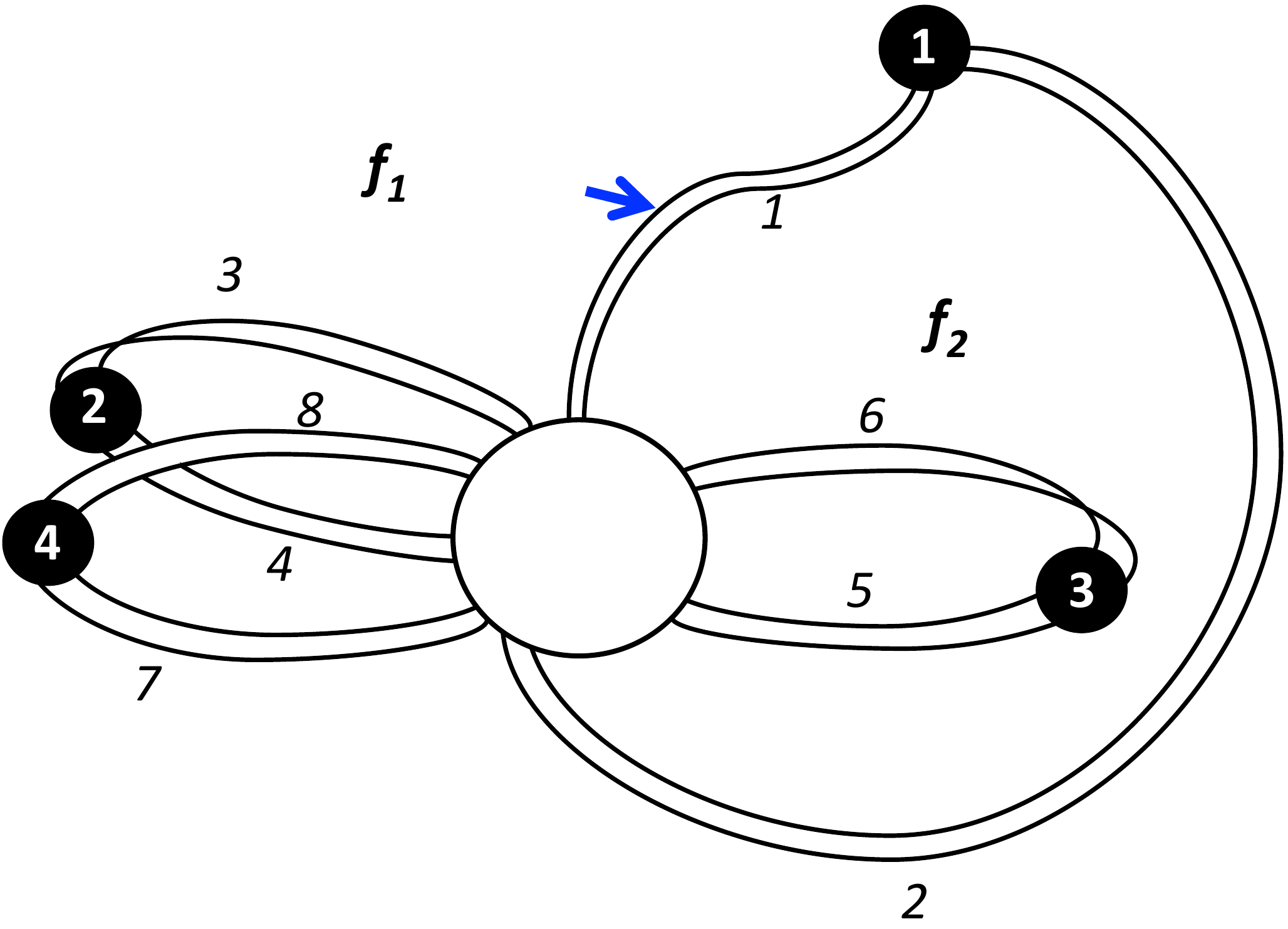}} \qquad
\subfigure[Border deletion. The two faces are merged into one single face $f_1$.\label{fig : NOM4}]{\includegraphics[scale = 0.2]{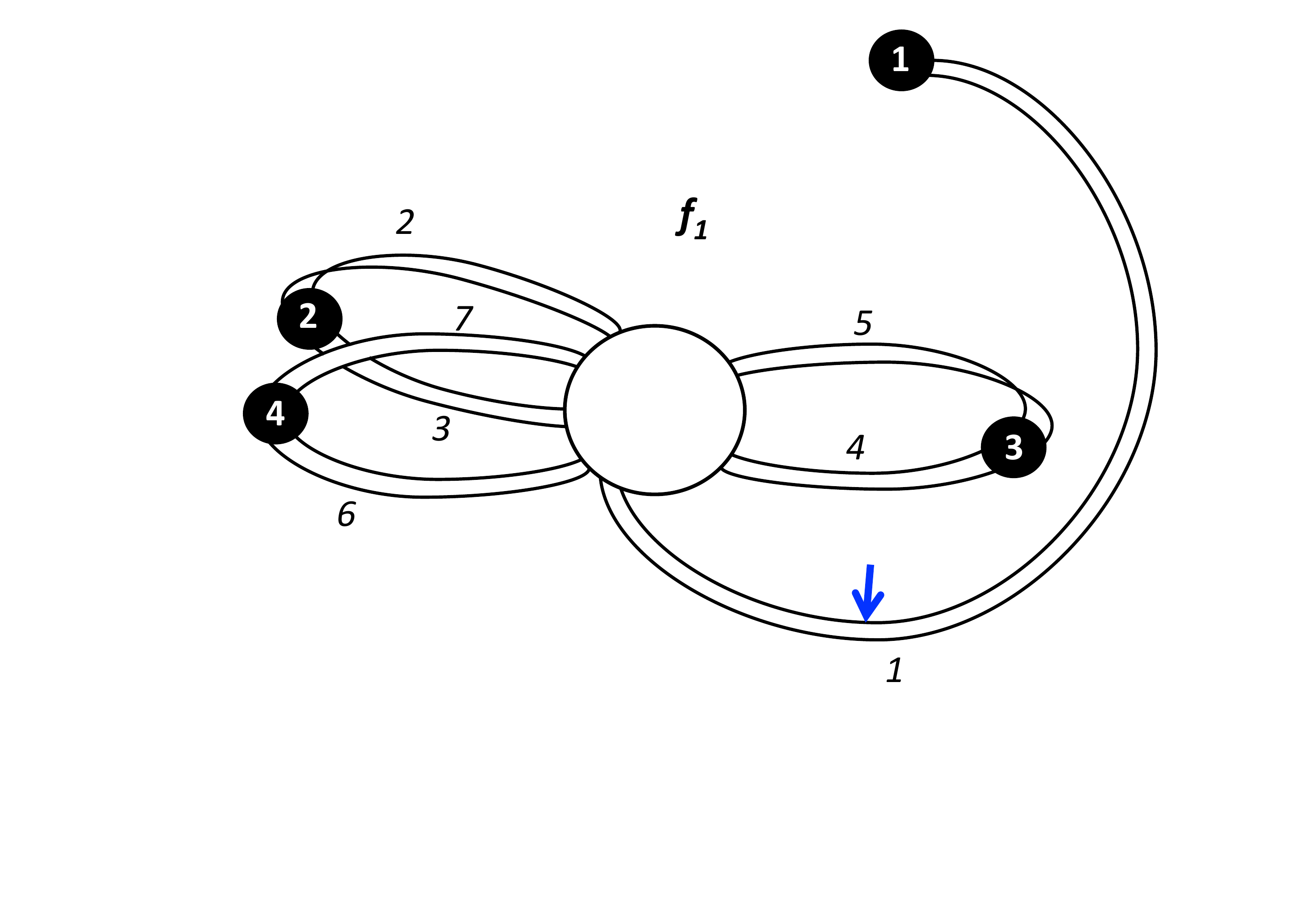}} \qquad
\subfigure[Leaf deletion.\label{fig : NOM5}]{\includegraphics[scale = 0.2]{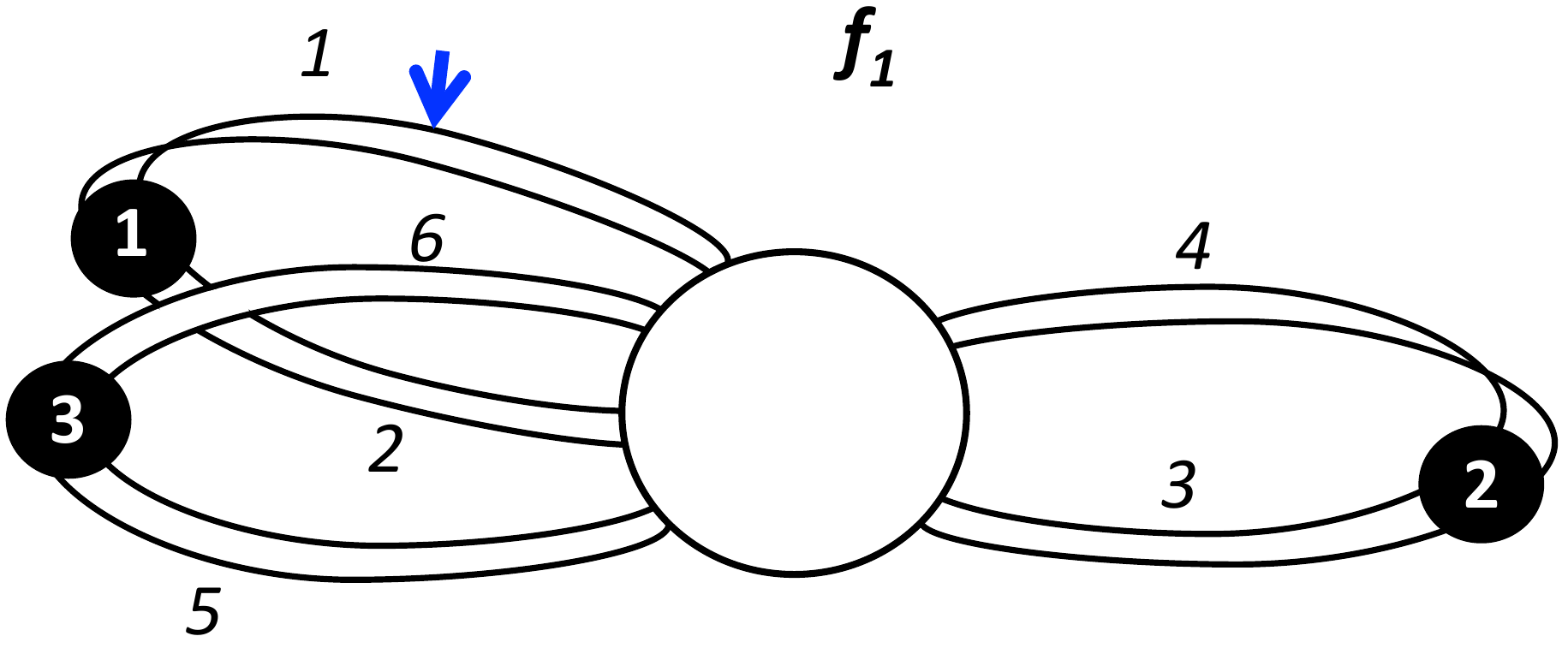}}\qquad
\subfigure[Handle deletion. A new face $f_2$ is created.\label{fig : NOM6}]{\includegraphics[scale = 0.2]{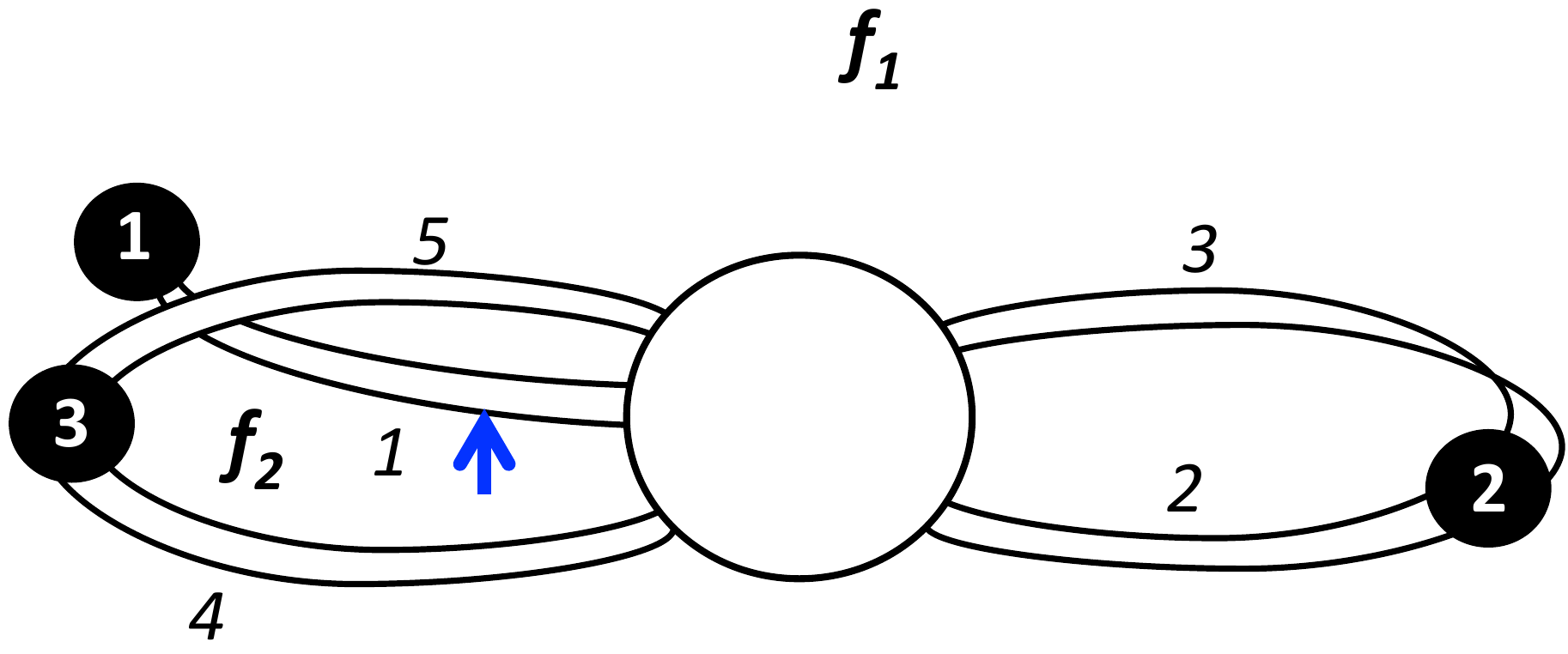}} 
\caption{Iteration of the root deletion process. The type of the deleted edge and the impact on the number of faces is mentioned below each figure.}
\label{fig : NOM}
\end{center}
\end{figure}


\begin{rem}
Note that the value of function $\vartheta$ depends on the labels of the edges. As an example the respective values of $\vartheta$ on the two labelled star hypermaps depicted on Figure \ref{fig : 2diflabMap} are $3$ and $4$. 
\begin{figure}[htbp]
\begin{center}
\includegraphics[scale = .2]{NonOrientabilityMeas1.pdf}\qquad
\includegraphics[scale = .2]{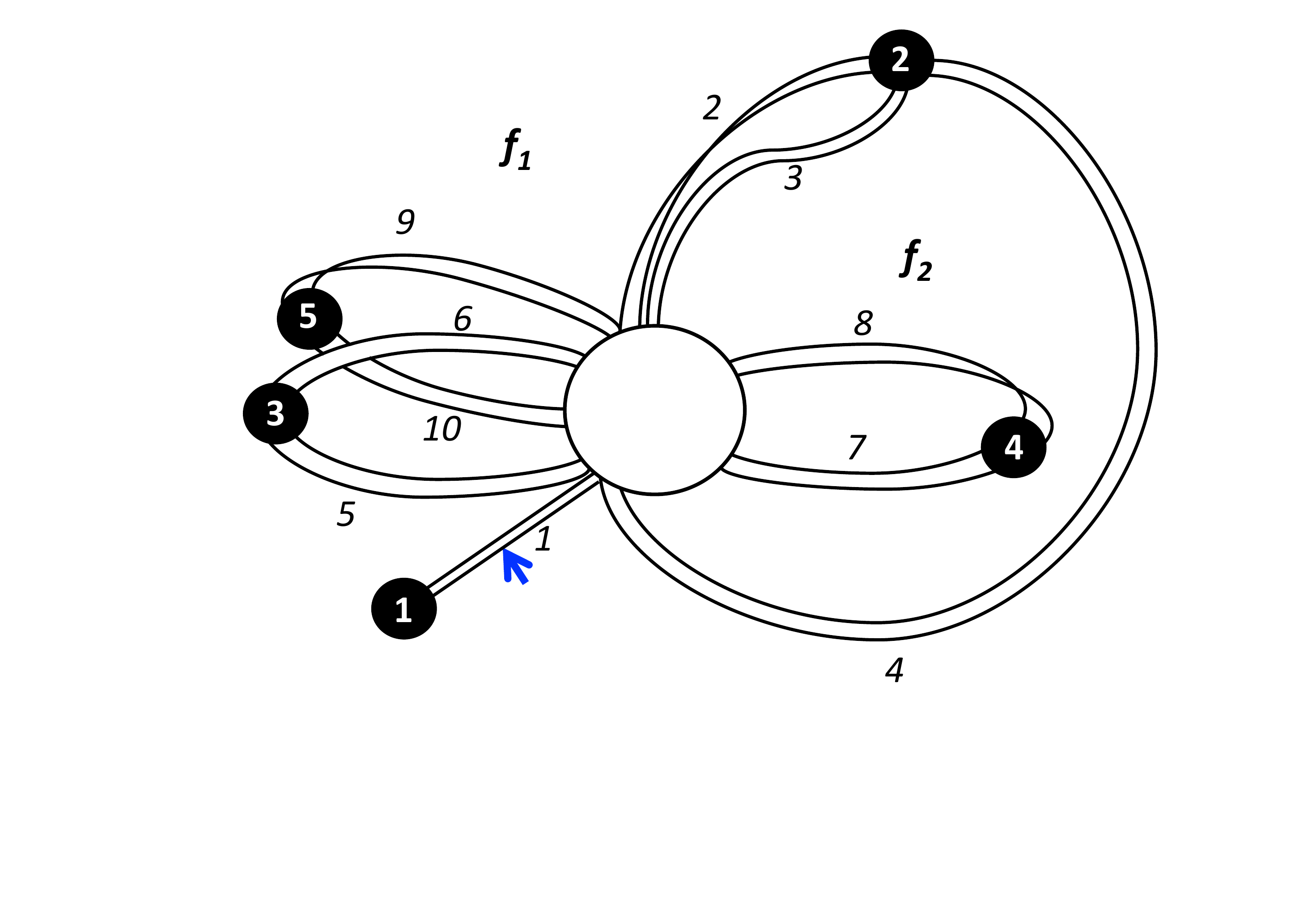}
\caption{Two labelled star hypermaps with the same underlying unlabelled map but different values of $\vartheta$.}
\label{fig : 2diflabMap}
\end{center}
\end{figure}
%
\end{rem}

\subsubsection{Case $k=n$}\label{sec : k=n}

\label{sec : bijk=n}

Now we look at the recurrence relation verified by $\Sigma^\la_{n}(\beta)$.
In the case $n=1$, $\Sigma^\la_{1}(\beta) = 1=\widetilde{h}^{1}_{1,1}(\beta+1).$ If $n>1$, star hypermaps in $\widetilde{\mathcal{L}}^\la_{n}$ do not contain any leaf and we may split these maps in three sets according to the type (cross border, border or handle) of their root edge that we delete to get the following bijective constructions. 
\begin{itemize}
\item (Cross border) The set of labelled star hypermaps with one black vertex, face distribution $\la$ and a cross border root incident to a face of degree $i$ is in bijection with the set of labelled star hypermaps with
(i) one black vertex, (ii) face distribution $\la_{\downarrow(i)}$, (iii) one marked position around the white vertex incident to a face of degree $i-1$, (iv) one marked position around the black vertex incident to the same face.
\item (Border) The set of labelled star hypermaps with one black vertex, face distribution $\la$ and a  border root incident to both a face of degree $i$ and a face of degree $j$ is in bijection with the set of labelled star hypermaps with (i) one black vertex, (ii) face distribution $\la_{\downarrow(i,j)}$, (iii) one marked position around the white vertex incident to a face of degree $i+j-1$ (once the position around the white vertex is chosen there is only one position around the black vertex such that connecting these two positions with a border cuts the face of degree $i+j-1$ into two faces of degree $i$ and $j$).
\item (Handle) The set of labelled star hypermaps with one black vertex, face distribution $\la$ and a handle root incident to a face of degree $i$ such that removing the root yields a face of degree $d$ and one of degree $i-1-d$ is in bijection with the set of labelled star hypermaps with (i) one black vertex, (ii) face distribution $\la^{\uparrow(i-1-d,d)}$, (iii) one marked position around the white vertex incident to a face of degree $i-1-d$, (iv) one marked position around the black vertex incident to a face of degree $d$ and (v) a type for the removed root: twist or untwist (as noted above, twisting the ribbon of a handle root yields another hypermap). 
\end{itemize}
\begin{exm} Figure \ref{fig : bijkn} illustrates the three bijections described above.
\begin{figure}[htbp]
\begin{center}
\subfigure[First bijection.]{\includegraphics[scale = 0.28]{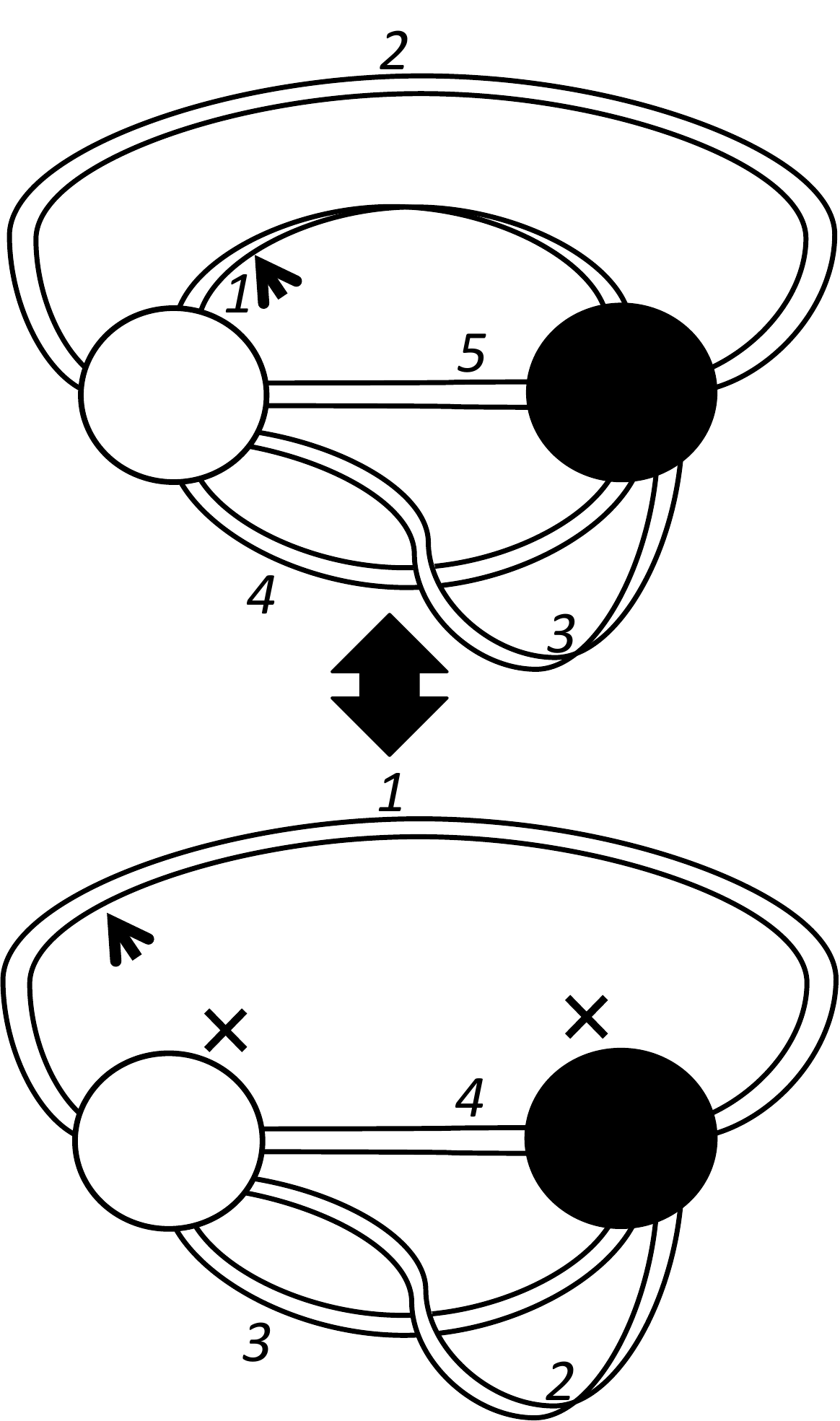}}\qquad
\subfigure[Second bijection.]{\includegraphics[scale = 0.28]{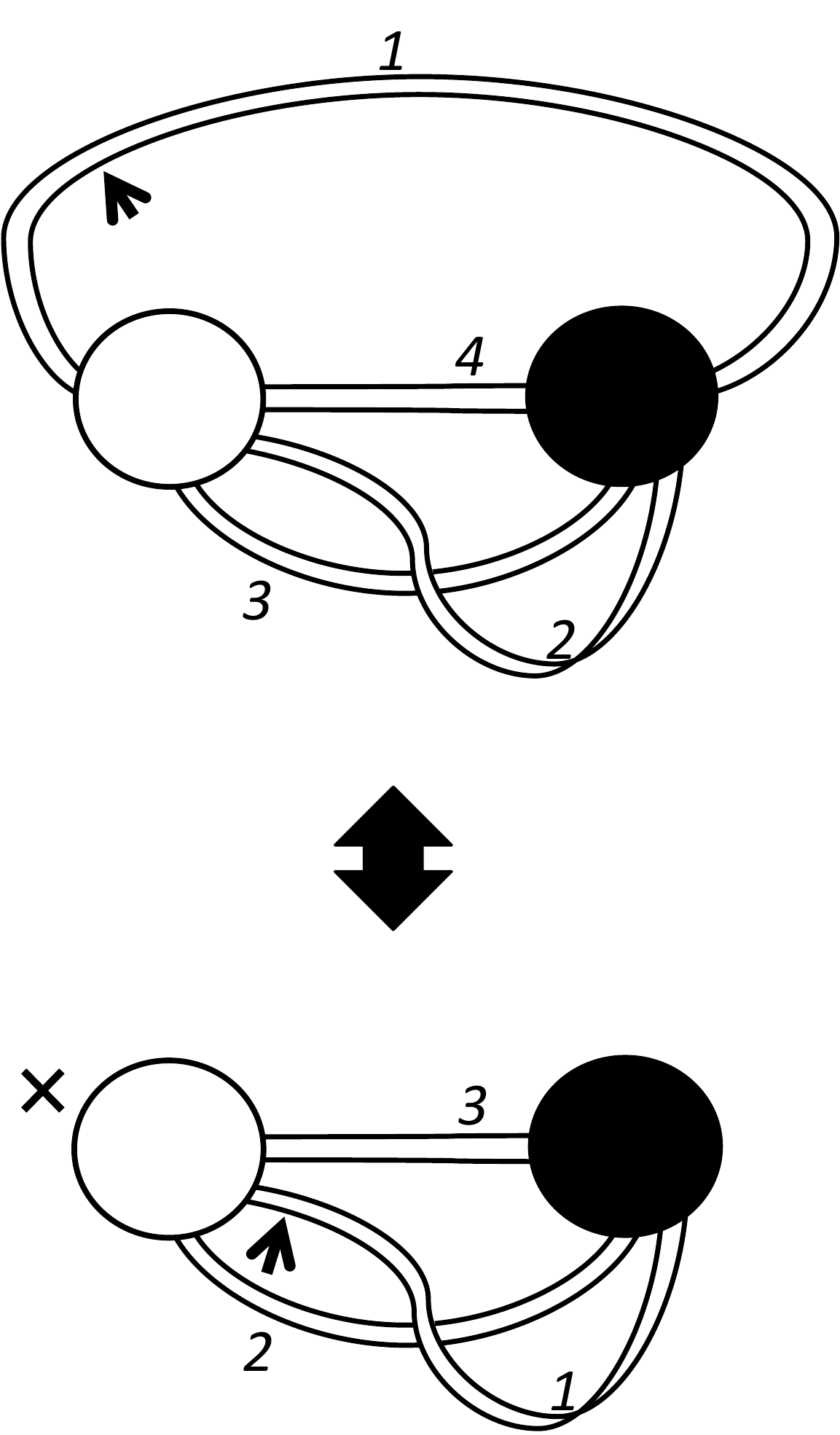}}\qquad
\subfigure[Third bijection.]{\includegraphics[scale = 0.28]{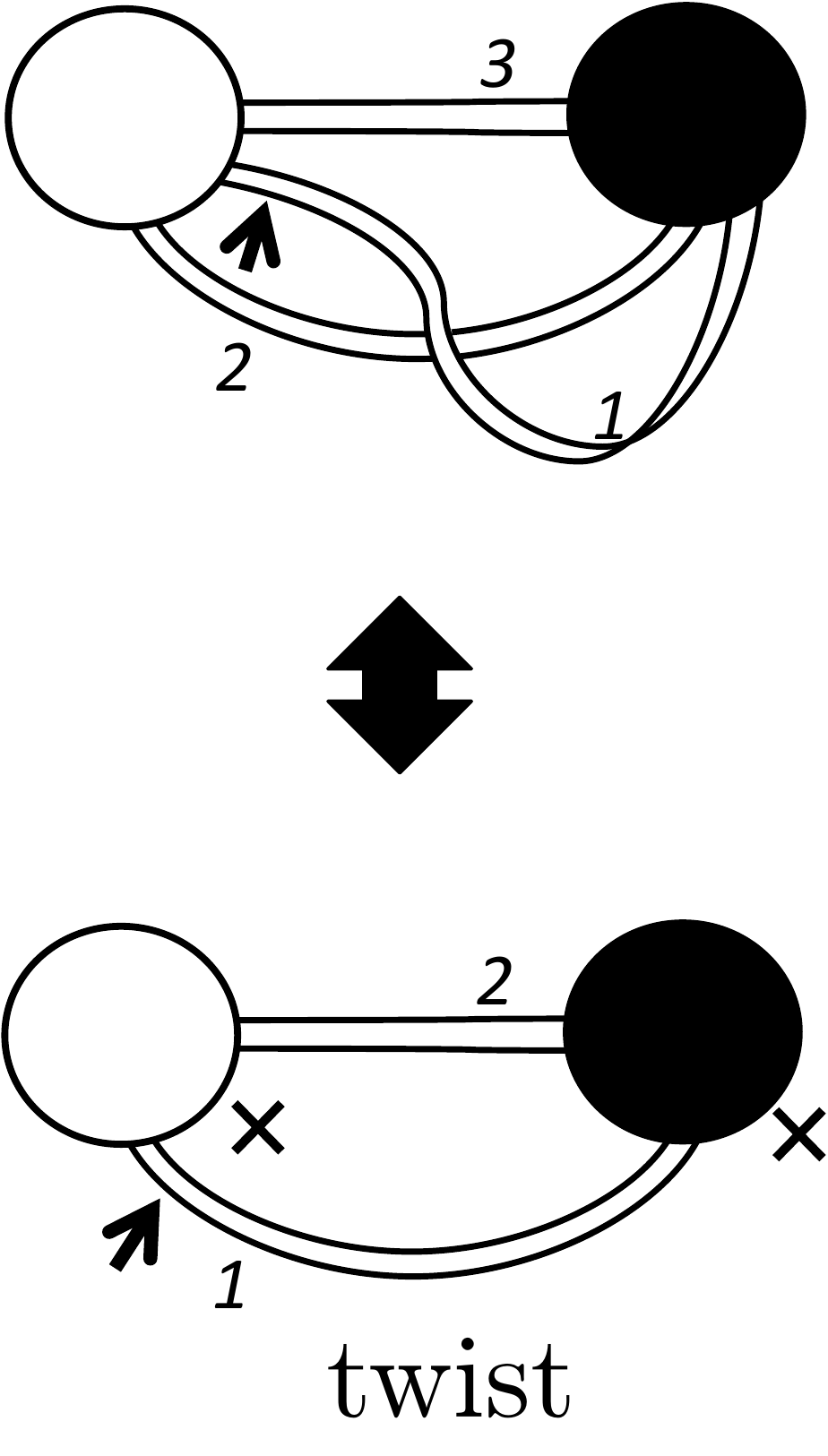}}\caption{Examples of application of the three bijections for labelled star hypermaps for $k=n$.}
\label{fig : bijkn}
\end{center}
\end{figure}
\end{exm}

As a consequence, one gets
\begin{align*}
\Sigma^\la_{n}(\beta) 
&=\sum_{i\geq 1}(i-1)^2m_{i-1}(\la_{\downarrow(i)})\hspace{-5mm}\sum_{M' \in \widetilde{\mathcal{L}}^{\la_{\downarrow(i)}}_{n-1}}\hspace{-5mm}\beta^{1+\vartheta(M')}\\
&+\sum_{i,j\geq 1}(i+j-1)m_{i+j-1}(\la_{\downarrow(i,j)})\hspace{-5mm}\sum_{M' \in \widetilde{\mathcal{L}}^{\la_{\downarrow(i,j)}}_{n-1}}\hspace{-5mm}\beta^{\vartheta(M')}\\
&+\sum_{i,d\geq 1}(i-d-1)dm_{i-d-1,d}(\la^{\uparrow(i-1-d,d)})\hspace{-8mm}\sum_{M' \in \widetilde{\mathcal{L}}^{\la^{\uparrow(i-1-d,d)}}_{n-1}}\left(\beta^{\vartheta(M')}+\beta^{1+\vartheta(M')}\right)
\end{align*}
As a conclusion, for any integer $n\geq1$, $\Sigma^\la_{n}(\beta) = \widetilde{h}_{n,n}^\la(\beta+1).$

\subsubsection{Case $k=1$}
For $k=1$, it is easy to show (\cite{GouJac96}) that $\widetilde{h}^\la_{n,[1^n]} = 0$ if $\la \neq (n)$. Using Equation (\ref{1one}) for $\la = (n)$ and $\rho  = \emptyset$, one gets :
\begin{align*}
a^n_{n,[1^n]} = a^{n-1}_{n-1,[1^{n-1}]} = a^1_{1,1} = 1
\end{align*}
Then according to Equation (\ref{eq : ha}),
\begin{align*}
h^n_{n,[1^n]} = a^n_{n,[1^n]} = 1
\end{align*}
Finally there are $(n-1)!$ labelled star hypermaps of face degree distribution $(n)$ and black vertex degree distribution $[1^n]$ that correspond to the $(n-1)!$ possible labelling of the only (unlabelled) star hypermap  with one white vertex and $n$ leaves. Furthermore for any such labelled star hypermap $M$, $\vartheta(M) = 0$ and $$\Sigma^n_{[1^n]}(\beta) = (n-1)! = \widetilde{h}^n_{n,[1^n]}(\beta +1).$$
\subsubsection{Case $k=2$}

As noticed in Remark \ref{rem : 2m}, hypermaps with black vertices degree distribution equal to $2^m$ for some integer $m$ reduce to non-bipartite maps by removing the black vertices. The resulting non-bipartite maps is a labelled {\bf monopole}, i.e. a labelled map composed of a single (white) vertex and $m$-edges with two consecutive labels and twice incident to the vertex. The type of the edges is the same as in hypermaps, i.e. an edge can be a cross-border, a border or a handle (but never a leaf). By abuse of notations, we also denote $\vartheta$ the induced measure of non-orientability for labelled monopoles and $\widetilde{\mathcal{L}}^\la_{[2^m]}$ the set of such maps with $m$ edges. Since the two edges incident to the same black vertex in the initial hypermap have consecutive labels and deleting one of the two edges makes the second one become a leaf, two iterations of the computation of $\vartheta$ in the initial hypermap is equivalent to one iteration in the resulting non-bipartite map.\\ 
According to Section \ref{sec : k=n}, in the case $m=1$, we have
\begin{align*}
\Sigma^{(1,1)}_{2}(\beta)  = \widetilde{h}_{2,2}^{(1,1)}(\be+1)~\mbox{ and }~\Sigma^{2}_{2}(\beta)  = \widetilde{h}_{2,2}^{2}(\be+1) 
\end{align*}
Suppose now $m>1$. As in section \ref{sec : bijk=n}, we split $\widetilde{\mathcal{L}}^\la_{[2^m]}$ into three subsets depending on the type of the root. 
\begin{itemize}
\item (Cross border) The set of labelled monopoles with face distribution $\la$ and a cross border root incident to a face of degree $i$ is in bijection with the set of decorated labelled monopoles with (i) face distribution $\la_{\da\da(i)}$, 
(ii) two differentiated marked positions incident to a face of degree $i-2$ ($m_{i-2}(\la_{\da\da(i)})(i-1)(i-2)$ possible choices). One marked position corresponds to the half-edge labelled $1$ in the preimage of the decorated monopole, the other to the half-edge labelled $2$.
\item (Border) The set of labelled monopoles with face distribution $\la$ and a  border root incident to both a face of size $i$ and a face of size $j$ is in bijection with the set of decorated labelled monopoles with (i) face distribution $\la_{\da\da(i,j)}$, (ii) one marked position incident to a face of degree $i+j-2$ (once this first position around the vertex is chosen there is only one more position around the vertex such that drawing a border by connecting these two positions cut the face of size $i+j-2$ into a face of degree $i$ and a face of degree $j$). We take the convention that the chosen marked position corresponds to the root half edge and the other position of the other half edge in the preimage of the decorated monopole. 
\item (Handle) The set of labelled monopoles with face distribution $\la$ and a handle root incident to a face of size $i$ such that removing the root yields a face of degree $d$ and one of degree $i-2-d$ is in bijection with the set of decorated labelled monopoles with (i) face distribution $\la^{\ua\ua(i-2-d,d)}$, (ii) one marked position around the vertex incident to a face of degree $i-2-d$, (iii) one marked position around the vertex incident to a face of degree $d$, (we assume without loss of generality that the root half edge of the deleted edge is incident to the face of degree $i-2-d$), (iv) and a type for the deleted root : twist or untwist.  
\end{itemize}

\begin{exm} Figure \ref{fig : bijkn} illustrates the three bijections described above.
\begin{figure}[htbp]
\begin{center}
\subfigure[First bijection.]{\includegraphics[scale = 0.28]{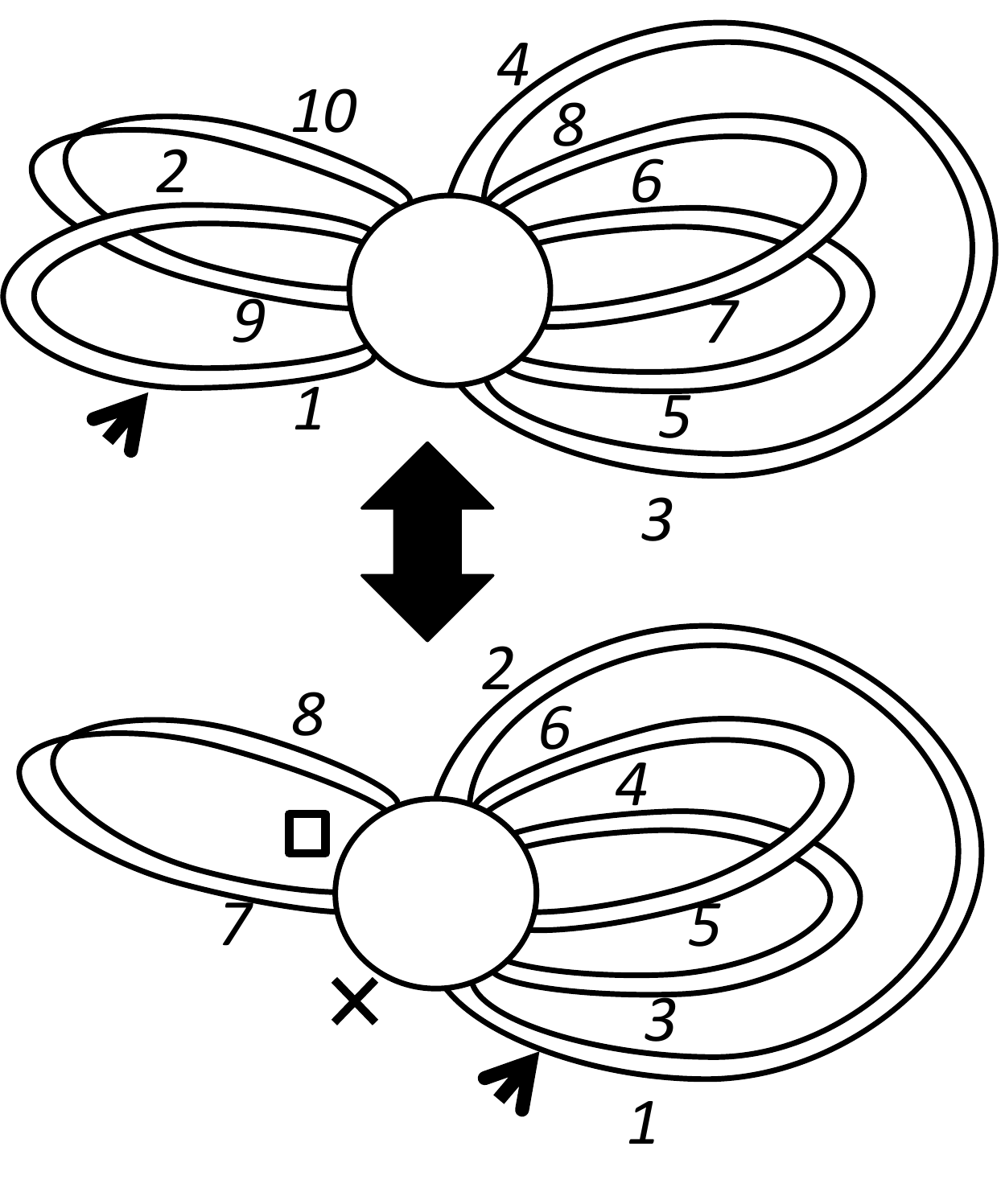}}\qquad
\subfigure[Second bijection.]{\includegraphics[scale = 0.28]{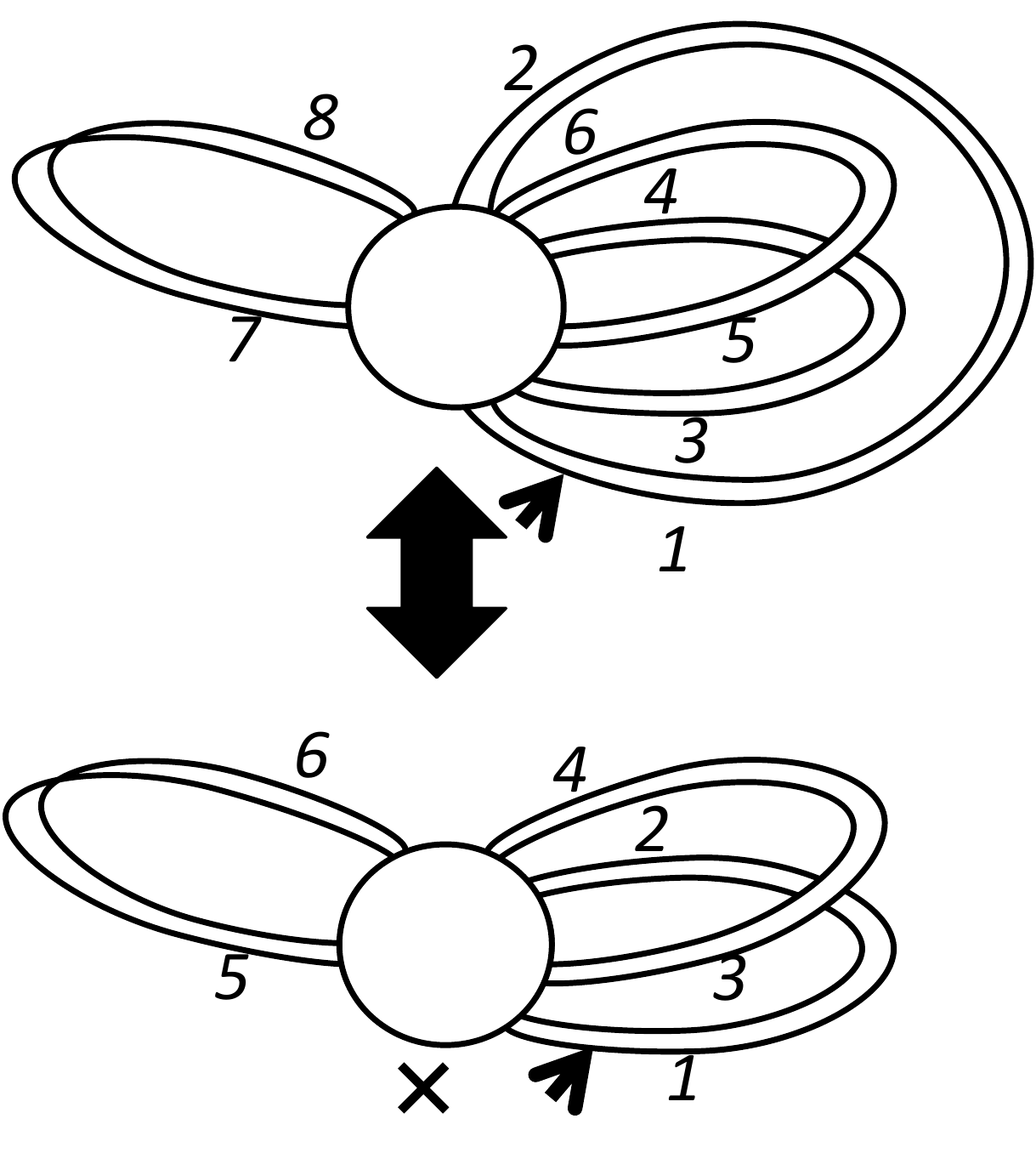}}\qquad
\subfigure[Third bijection.]{\includegraphics[scale = 0.28]{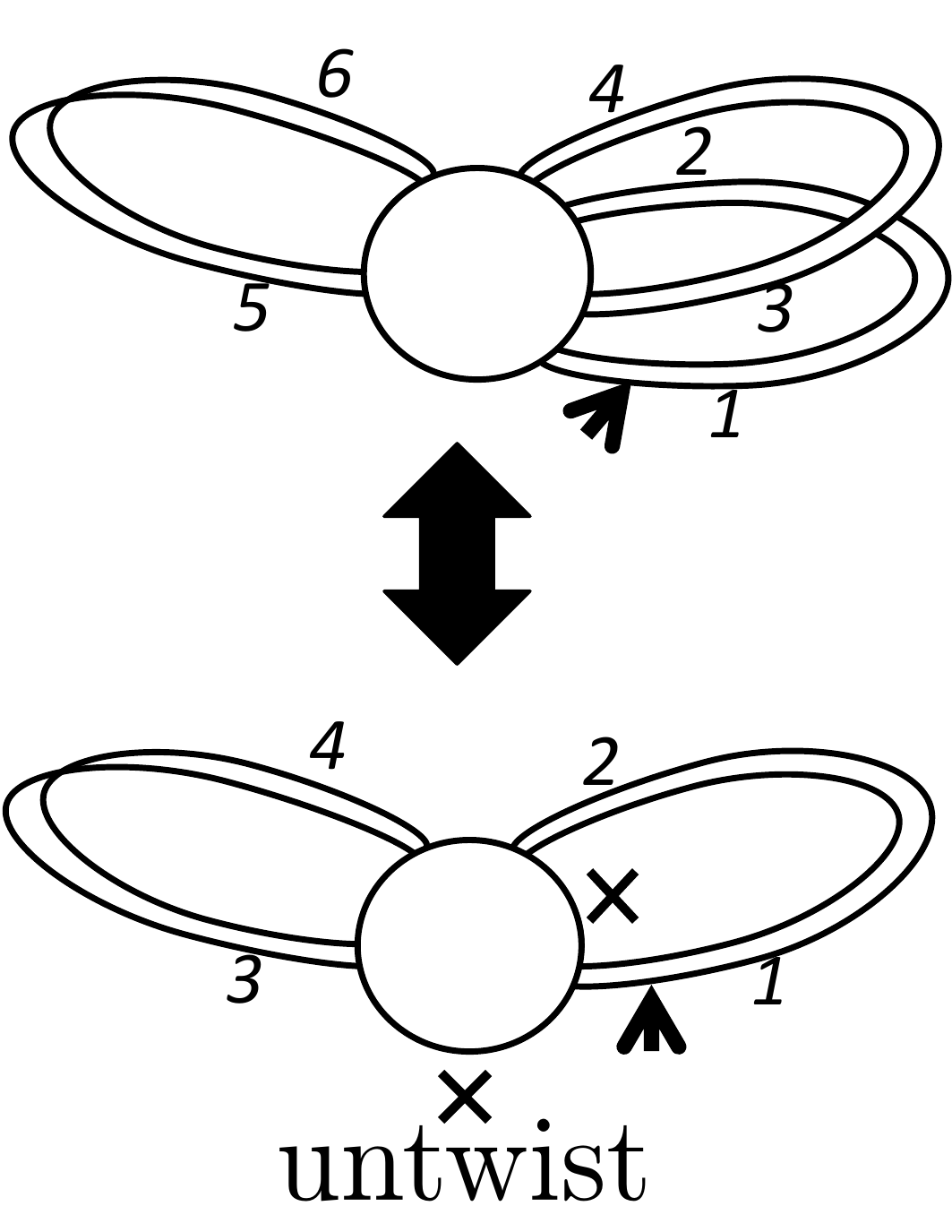}}\caption{Examples of application of the three bijections for labelled star hypermaps for $k=2$.}
\label{fig : bijkn}
\end{center}
\end{figure}
\end{exm}
As a consequence, one gets
\begin{align*}
\Sigma^\la_{[2^m]}(\beta) &=\sum_{i\geq 1}(i-1)(i-2)m_{i-2}(\la_{\da\da(i)})\sum_{M' \in \widetilde{\mathcal{L}}^{\la_{\da\da(i)}}_{[2^{m-1}]}}\beta^{1+\vartheta(M')}\\
&+\sum_{i,j\geq 1}(i+j-2)m_{i+j-2}(\la_{\da\da(i,j)})\sum_{M' \in \widetilde{\mathcal{L}}^{\la_{\da\da(i,j)}}_{[2^{m-1}]}}\beta^{\vartheta(M')}\\
&+\sum_{i,d\geq 1}(i-2-d)dm_{i-d-2,d}(\la^{\ua\ua(i-2-d,d)})\hspace{-8mm}\sum_{M' \in \widetilde{\mathcal{L}}^{\la^{\ua\ua(i-2-d,d)}}_{[2^{m-1}]}}\left(\beta^{\vartheta(M')}+\beta^{1+\vartheta(M')}\right)
\end{align*}
As a conclusion, for any integer $m\geq1$, $$\Sigma^\la_{[2^m]}(\beta) = \widetilde{h}_{2m,[2^m]}^\la(\beta+1).$$
\subsubsection{Case $k=3$}
As a final case, we show Theorem \ref{thm : H} in the case $k=3$ We focus on labelled star hypermaps that contain only black vertices of degree $3$. We call these maps {\bf labelled star triangulations}. 
First use Section \ref{sec : k=n}, to show that 
\begin{gather*}
\Sigma^{(1,1,1)}_{3}(\beta)  = \widetilde{h}_{3,3}^{(1,1,1)}(\be+1)~\mbox{ , }~\Sigma^{(2,1)}_{3}(\beta)  = \widetilde{h}_{3,3}^{(2,1)}(\be+1)\\
\mbox{ and }~\Sigma^{3}_{3}(\beta)  = \widetilde{h}_{3,3}^{3}(\be+1)
\end{gather*}
Then, we build a bijection between labelled star triangulations in $\widetilde{\mathcal{L}}^{\la}_{[3^m]}$ and some decorated labelled star triangulations with $m-1$ black vertices. To this end, we iterate three times the root deletion process. As the edges incident to the black vertex labelled $1$ (incident to the root) are labelled with $1,2$ and $3$, exactly these three edges are deleted after three iterations of the root deletion process and the resulting hypermap is a labelled star triangulation in $\widetilde{\mathcal{L}}^{\la'}_{[3^{m-1}]}$. \\
We look at all the possible configurations of the subset of the labelled triangulation composed of the white vertex, the black vertex incident to the root and the three edges incident to this black vertex. We call this subset the {\bf root submap}. Delete the root submap $\sigma_M$ of a star triangulation $M$ of face degree distribution $\la$ but keep a mark at the incidence positions around the white vertex of all of its three edges. Denote $M'$ the resulting star triangulation with three marks and $\la'$ its resulting face degree distribution. There are 6 possible cases that we split first according to the incidence of the marks to the various faces of $M'$.  
\begin{itemize}
\item[a] All the marks are incident to the same face of $M'$. In this case, it is easy to show that $\vartheta(M) = \vartheta(M') + \vartheta(\sigma_M)$. We have three sub-cases depending on the number of faces in $\sigma_M$.    
\begin{itemize}
\item[a.1] $\sigma_M$ has exactly one face of degree $3$. In this case, $\la'= \la_{\da\da\da(i)}$ for some $i$ and there is a bijection between the set of such labelled star triangulations of face degree distribution $\la$ and the set of couples composed of a labelled star map with one black vertex of face distribution $(3)$ and a labelled star triangulation of face degree distribution $\la_{\da\da\da(i)}$ with (i) one marked face of degree $i-3$, (ii) three marked positions among $i-1$ around the white vertex within the marked face and (iii) one distinguished position among the three marks. The distinguished position is used to locate the root of the star map of face degree distribution $(3)$ within the star triangulation. 

\item[a.2] $\sigma_M$ has exactly two faces of degree $(2,1)$. In this case, $\la'= \la_{\da\da\da(i,j)}$ for some $i\leq j$ and there is a bijection between the set of such labelled star triangulations of face degree distribution $\la$ and the set of couples composed of a labelled star map with one black vertex of face distribution $(2,1)$ and a labelled star triangulation of face degree distribution $\la_{\da\da\da(i,j)}$ with (i) one marked face of degree $i+j-3$, (ii) two differentiated marked positions among $i+j-2$ around the white vertex within the marked face. 

\item[a.3] $\sigma_M$ has three faces of degree $(1,1,1)$. In this case, $\la'= \la_{\da\da\da(i,j,k)}$ for some $i,j,k$. We assume that $i$ is the degree of the face incident to the root of $\sigma_M$ and the next edge of $\sigma_M$ moving counterclockwise around the white vertex and that $k$ is the degree of the face incident to the root of $\sigma_M$ and the next edge of $\sigma_M$ moving clockwise around the white vertex.There is a bijection between the set of such labelled star triangulations of face degree distribution $\la$ and the set of couples composed of a labelled star maps with one black vertex of face distribution $(1,1,1)$ and a labelled star triangulation of face degree distribution $\la_{\da\da\da(i,j,k)}$ with (i) one marked face of degree $i+j+k-3$, (ii) one marked positions among $i+j+k-3$ around the white vertex within the marked face. 
The marked position locates the root of the star map of face degree distribution $(1,1,1)$. Then it is easy to show that $i$ and $k$ give exactly the required information to locate the two other edges.
\end{itemize}

\item[b] Two of the marks are incident to one face of $M'$ and the third one to a second distinct face of $M'$. In this case, there is a handle edge in $\sigma_M$ whose contribution to $\vartheta(M)$ is not its contribution to $\vartheta(\sigma_M)$ (this edge may not be a handle at all in the submap $\sigma_M$). Denote $\sigma_M'$ the reduction of $\sigma_M$ obtained by deletion of the handle of smallest index. We have  $\vartheta(M) = \vartheta(M') + \vartheta(\sigma_M')+\eps$ where $\eps \in \{0,1\}$ depending on the contribution of the handle (recall that by twisting the handle one get a distinct map with the other value for $\eps$). We have two sub-cases depending on the number of faces in $\sigma_M'$.   

\begin{itemize}
\item[b.1] $\sigma_M'$ has exactly one face of degree $2$. In this case, $\la'= {\la^{\ua\ua\ua(i-3-d,d)}}$ for some $i$ and $d$ with $d \leq i-3-d$. There is a bijection between the set of such labelled star triangulations of face degree distribution $\la$ and the set of couples composed of a labelled star map with one black vertex of face distribution $(2)$ and a labelled star triangulation of face degree distribution ${\la^{\ua\ua\ua(i-3-d,d)}}$ with (i) one marked face of degree $i-3-d$, (ii) one marked face of degree $d$, (iii) one marked position around the white vertex within the marked face of degree $i-3-d$, (iii) one marked position around the white vertex within the marked face of degree $d$, (iv) a distinguished marked position around the white vertex within the face of degree $d$ or the face of degree $i-3-d$ (there are $i-1$ possibilities after adding the two first marks), (v) one position around the white vertex within $\sigma_M'$ (2 possibilities), (vi) an index for the handle edge ($3$ possible values) and (vii) an indication whether the handle is twisted or not. The two marks belonging to the same face of $M'$ locate the edges of $\sigma_M'$, its root being incident to the distinguished one. The other mark locates the incidence of the handle to the white vertex. The mark within $\sigma_M'$ locates the incidence of the handle to the black vertex labelled $1$. 
\item[b.2] $\sigma_M'$ has two faces of degree $(1,1)$. In this case, $\la'= \la^{\da(i,j)\ua\ua(i+j-3-d,d)}$ for some $i,j$ and $d$. There is a bijection between the set of such labelled star triangulations of face degree distribution $\la$ and the set of couples composed of a labelled star map with one black vertex of face distribution $(1,1)$ and a labelled star triangulation of face degree distribution ${\la^{\da(i,j)\ua\ua(i+j-3-d,d)}}$ with (i) one marked face of degree $i-3-d$, (ii) one marked face of degree $d$, (iii) one marked position around the white vertex within the marked face of degree $i-3-d$, (iii) one marked position around the white vertex within the marked face of degree $d$, (iv) one position around the white vertex within $\sigma_M'$ (2 possibilities), (vi) an index for the handle edge ($3$ possible values) and (vii) an indication whether the handle is twisted or not.
\end{itemize}
\item[c] The three marks are incident to three disctinct faces of $M'$. All the edges of $\sigma_M$ are handles and we have  $\vartheta(M) = \vartheta(M') + \eps_1 + \eps_2$ where $\eps_i \in \{0,1\}$ depending on the contribution of the two handles with the smallest index.\\
In this case, $\la'= \la_{\ua\ua\ua(i-3-d-f,d,f)}$ for some $i,d,f$. We assume that (i) $i-3-d-f$ is the degree of the face of $M'$ incident to the mark corresponding to the root of $\sigma_M$ (ii) $d$ is the degree of the face of $M'$ incident to the mark corresponding to the edge labelled $2$ in $\sigma_M$ and (iii) $f$ is the degree of the face of $M'$ incident to the mark corresponding to the edge labelled $3$ in $\sigma_M$.There is a bijection between the set of such labelled star triangulations of face degree distribution $\la$ and the set of labelled star triangulations of face degree distribution $\la_{\ua\ua\ua(i-3-d-f,d,f)}$ with (i) one marked face of degree $i-3-d-f$, (ii) one marked face of degree $d$, (iii) one marked face of degree $f$, (iv) one marked positions around the white vertex within the marked face of degree $i-3-d-f$, (v) one marked positions around the white vertex within the marked face of degree $d$, (vi) one marked positions around the white vertex within the marked face of degree $f$, (vii) an indication whether the two handles labelled $1$ and $2$ are twisted or not, (viii) an indication wether the order of the edges in $\sigma_M$ moving clockwise around the black vertex is $132$ or $123$. 
\end{itemize}
\begin{exm} Figure \ref{fig : Bk3} illustrates the six bijections described above.
\begin{figure}[htbp]
\begin{center}
\subfigure[Case a.1. \label{fig : Bk3_1}]{\includegraphics[scale = 0.21]{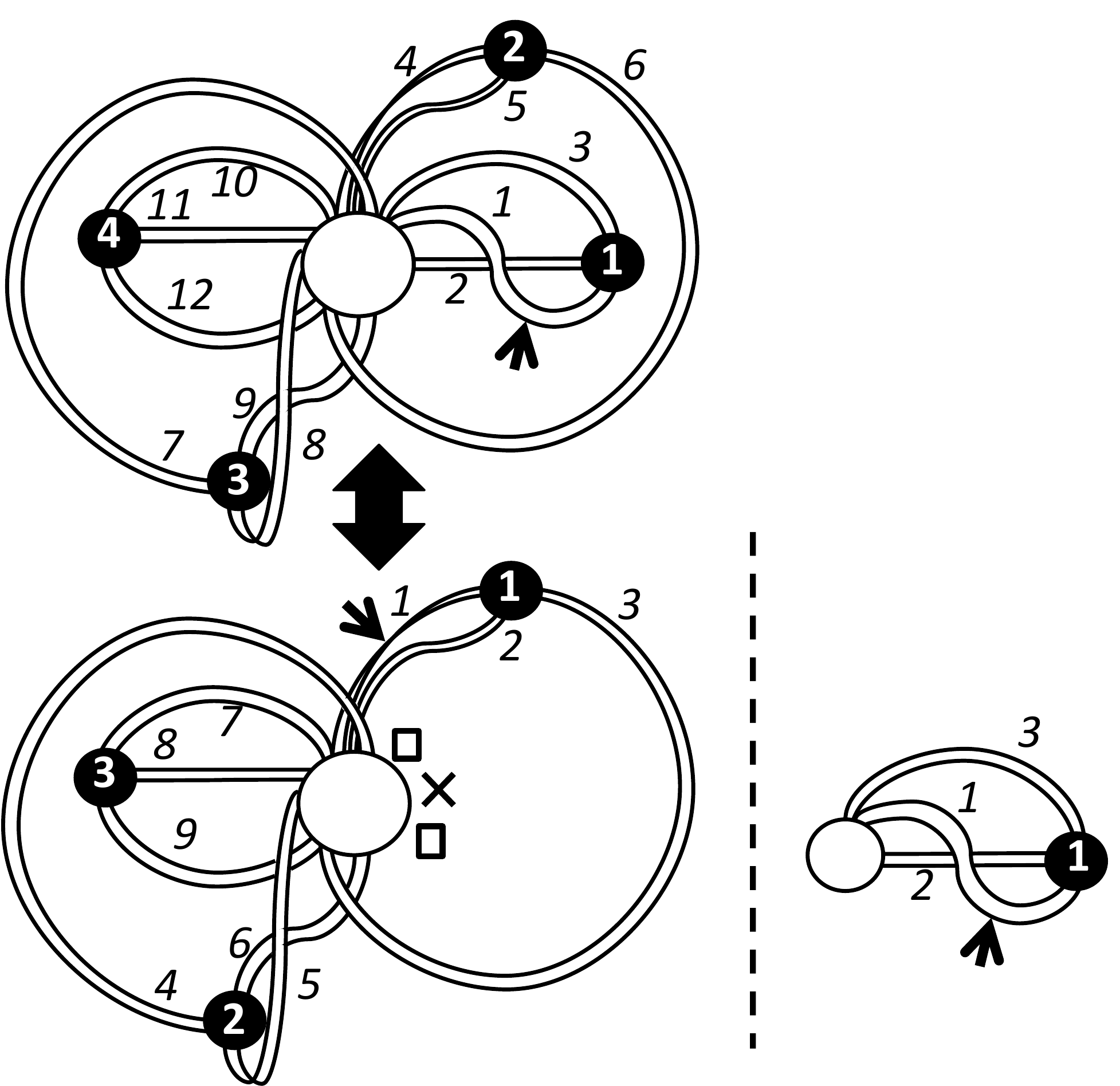}}\quad\quad
\subfigure[Case a.2. \label{fig : Bk3_2}]{\includegraphics[scale = 0.21]{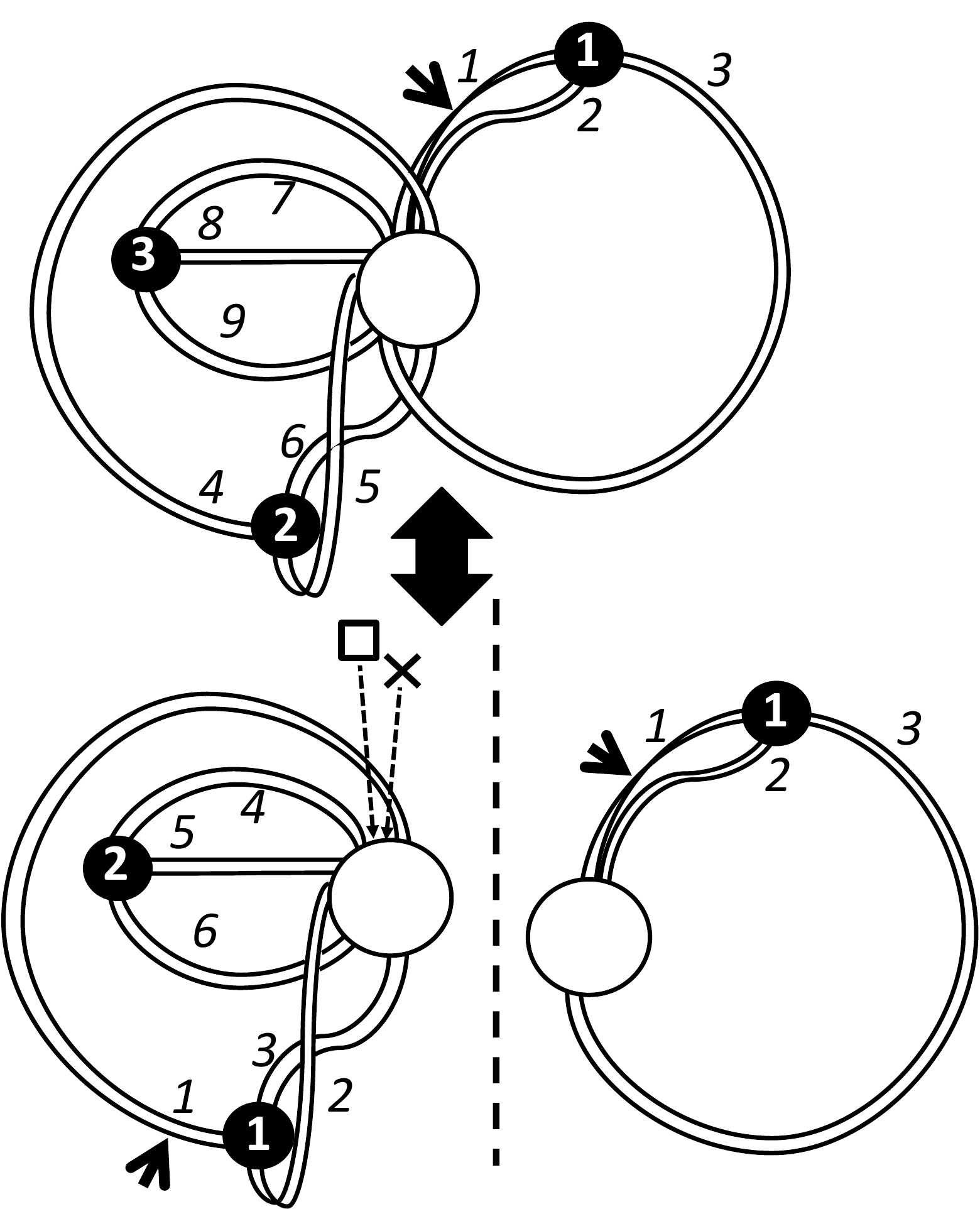}}\quad\quad
\subfigure[Case a.3.\label{fig : Bk3_6}]{\includegraphics[scale = 0.11]{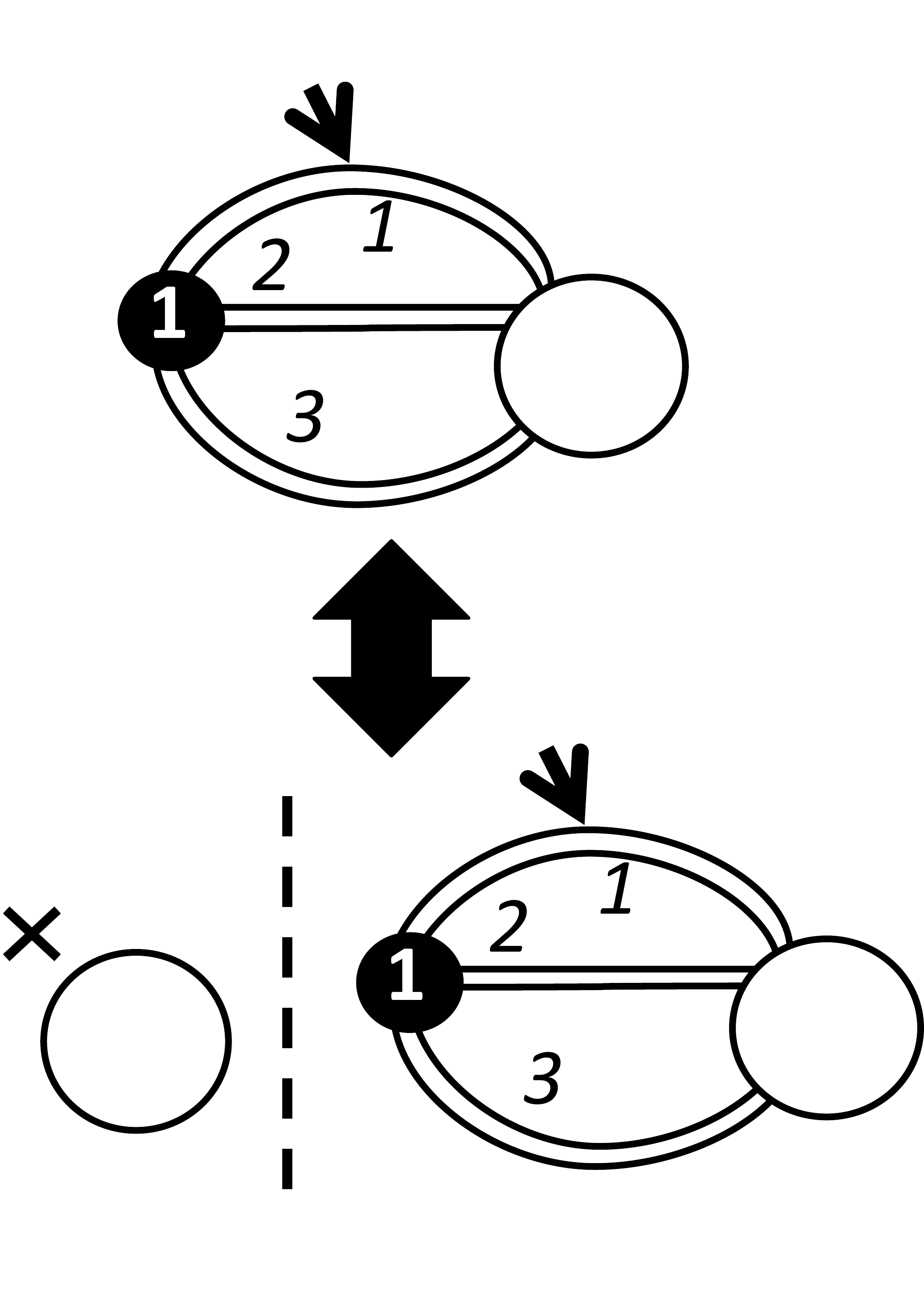}} \\
\subfigure[Case b.1.\label{fig : Bk3_3}]{\includegraphics[scale = 0.21]{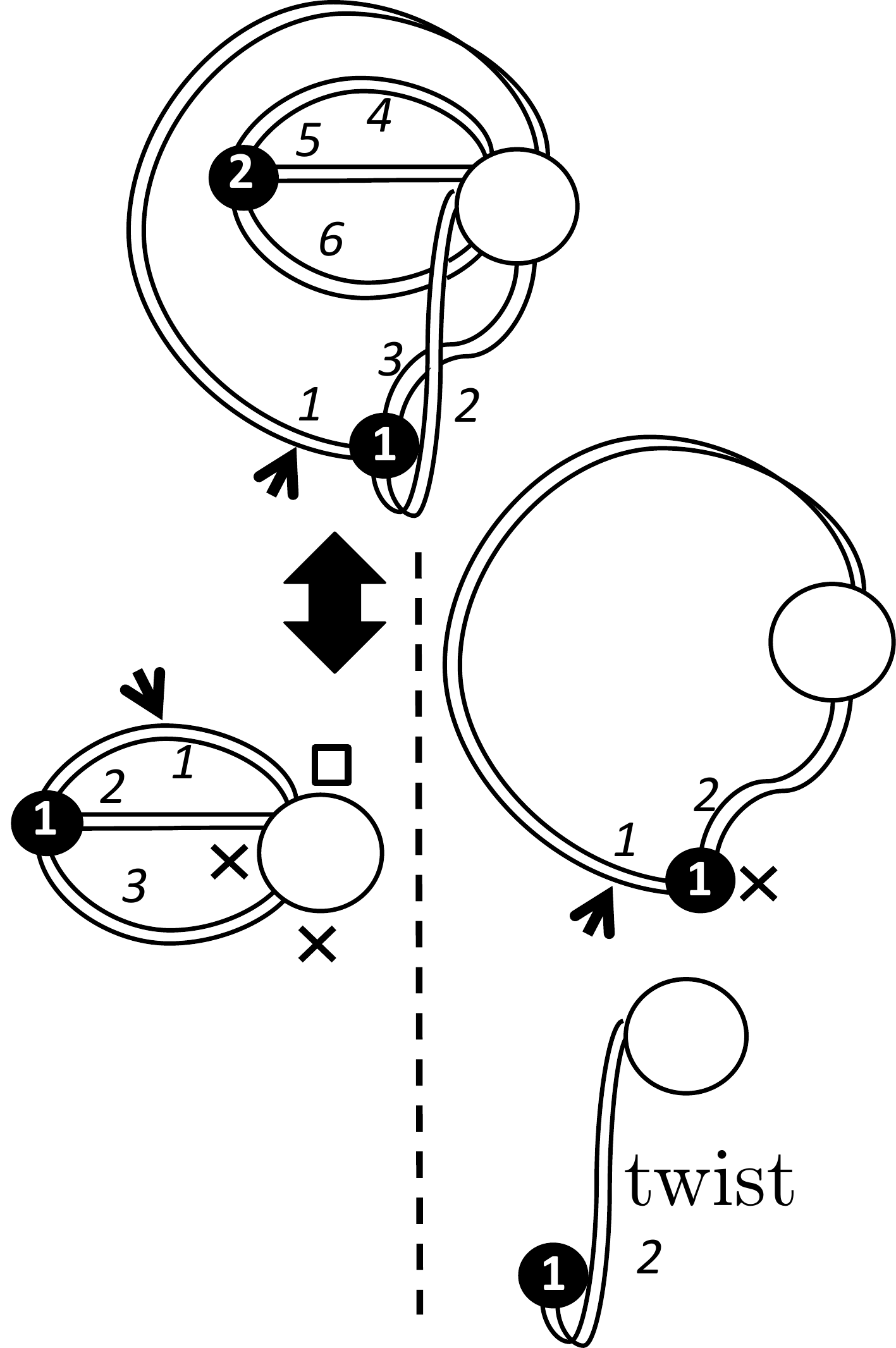}}\quad\quad\quad
\subfigure[Case b.2.\label{fig : Bk3_4}]{\includegraphics[scale = 0.21]{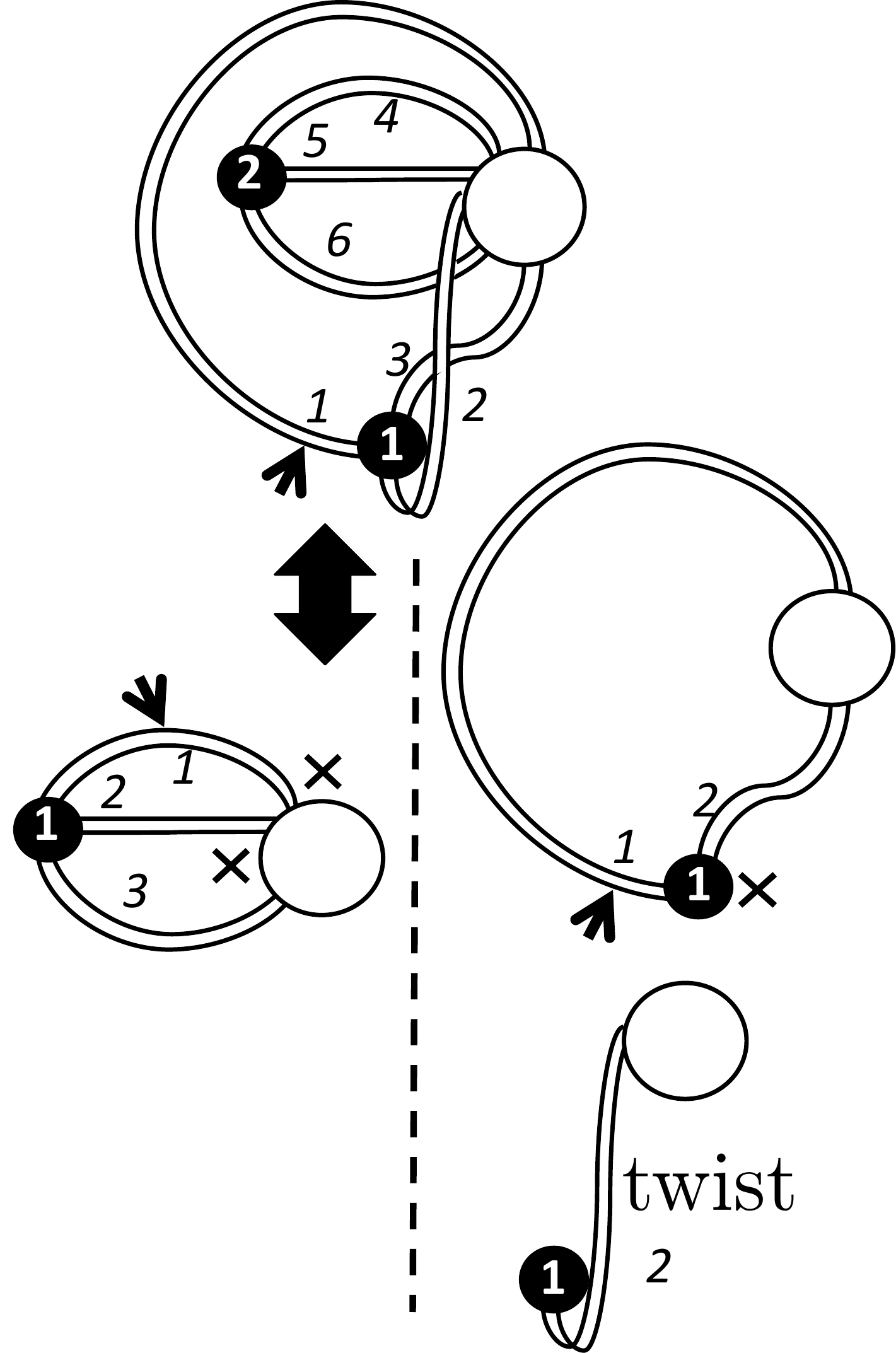}}\quad\quad\quad
\subfigure[Case c.\label{fig : Bk3_5}]{\includegraphics[scale = 0.21]{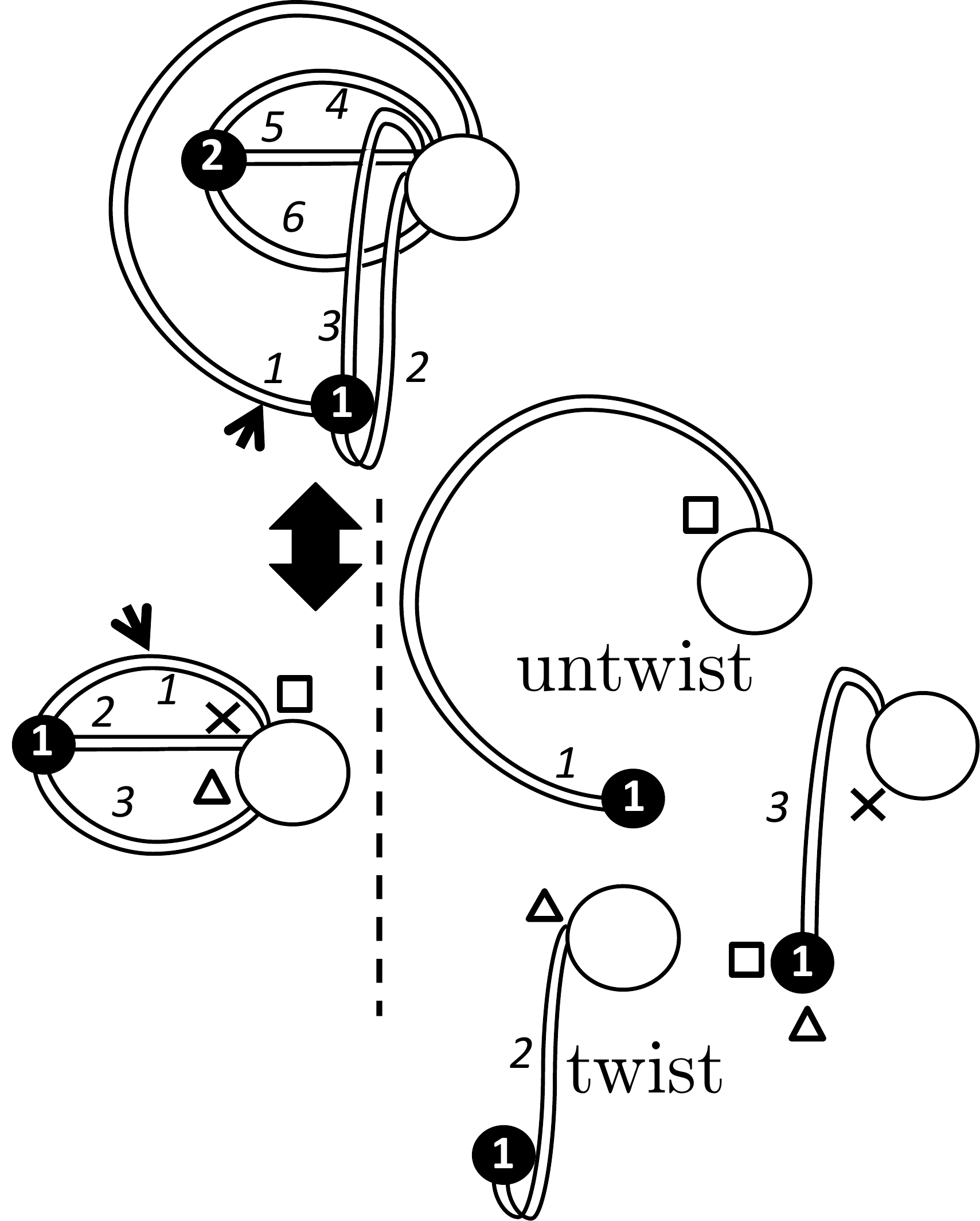}}
\caption{Illustration of the six bijections for labelled star triangulations.}
\label{fig : Bk3}
\end{center}
\end{figure}
\end{exm}
As a consequence, one gets
\begin{align*}
&\Sigma^\la_{[3^m]}(\beta) =
3\sum_{i}\binom{i-1}{3}m_{i-3}(\la_{\da\da\da (i)})
\hspace{-5mm}\sum_{\substack{\sigma_M \in \wit{\mathcal{L}}^3_3\\M'\in\wit{\mathcal{L}}^{\la_{\da\da\da (i)}}_{[3^{m-1}]}}}\hspace{-4mm}\beta^{\vartheta(\sigma_M)+\vartheta(M')}\\
\nonumber&+\frac{1}{2}\sum_{i,j}(i+j-2)(i+j-3)m_{i+j-3}({\la_{\da\da\da( i, j)}})\hspace{-5mm}\sum_{\substack{\sigma_M \in \wit{\mathcal{L}}^{(2,1)}_3\\M'\in\wit{\mathcal{L}}^{\la_{\da\da\da( i, j)}}_{[3^{m-1}]}}}\hspace{-4mm}\beta^{\vartheta(\sigma_M)+\vartheta(M')}\\
&+\sum_{i,j,k}(i+j+k-3)m_{i+j+k-3}(\la_{\da\da\da(i,j,k)})
\hspace{-5mm}\sum_{\substack{\sigma_M \in \wit{\mathcal{L}}^{(1,1,1)}_3\\M'\in\wit{\mathcal{L}}^{\la_{\da\da\da(i,j,k)}}_{[3^{m-1}]}}}\hspace{-4mm}\beta^{\vartheta(\sigma_M)+\vartheta(M')}\\
&+6\frac{1}{2}\sum_{i,d}(i-1)(i-d-3)dm_{i-3-d,d}({\la^{\ua\ua\ua(i-3-d,d)}})
\hspace{-8mm}\sum_{\substack{\sigma_M' \in \wit{\mathcal{L}}^{(2)}_2\\M'\in\wit{\mathcal{L}}^{\la^{\ua\ua\ua(i-3-d,d)}}_{[3^{m-1}]}}}\hspace{-8mm}(1+\beta)\beta^{\vartheta(\sigma_M')+\vartheta(M')}\\
&+6\frac{1}{2}\sum_{i,j,d}(i+j-3-d)dm_{i+j-d-3,d}(\la^{\da(i,j)\ua\ua(i+j-3-d,d)})
\hspace{-13mm}\sum_{\substack{\sigma_M' \in \wit{\mathcal{L}}^{(1,1)}_2\\M'\in\wit{\mathcal{L}}^{\la^{\da(i,j)\ua\ua(i+j-3-d,d)}}_{[3^{m-1}]}}}\hspace{-12mm}(1+\beta)\beta^{\vartheta(\sigma_M')+\vartheta(M')}\\
&+2\sum_{i,d,f}(i-3-d-f)dfm_{i-3-d-f,d,f}(\la^{\ua\ua\ua(i-3-d-f,d,f)})
\hspace{-10mm}\sum_{\substack{\sigma_M' \in \wit{\mathcal{L}}^{(1,1)}_2\\M'\in\wit{\mathcal{L}}^{\la^{\ua\ua\ua(i-3-d-f,d,f)}}_{[3^{m-1}]}}}\hspace{-8mm}(1+\beta)^2\beta^{\vartheta(M')}.
\end{align*}
As a conclusion, for any integer $m\geq1$, $$\Sigma^\la_{[3^m]}(\beta) = \widetilde{h}_{3m,[3^m]}^\la(\beta+1).$$

\section{Appendix: computation of operator $\Omega_k$} \label{sec : appendix}

We provide the computation of operator $\Omega_2$. The computation of $\Omega_3$ is performed in a similar way but is much more cumbersome and is not detailed here. 
Denote $\zeta_k=kp_{k+1}\frac{\partial}{\partial p_k}$ and write  $$\Omega_2=[\Delta,\Omega_1]=(\al-1)\sum_{i,k}B_{i,k}+\al\sum_{i,j,k}A_{i,j,k}+\sum_{i,j,k}\Gamma_{i,j,k}$$ where 
\begin{align*}
&B_{i,k\phantom{,j}}=\left[ (i-1)^2p_i\frac{\partial}{\partial p_{i-1}},\zeta_k \right],\\
&A_{i,j,k} = \left[(i+j-1)p_ip_j\frac{\partial}{\partial p_{i+j-1}},\zeta_k \right],\\
&\Gamma_{i,j,k}=\left[ijp_{i+j+1}\frac{\partial}{\partial p_{i}}\frac{\partial}{\partial p_{j}},\zeta_k \right].
\end{align*}
The non-zero terms are:
{\small 
\begin{align*}
B_{i,i}=\left[ (i-1)^2p_i\frac{\partial}{\partial p_{i-1}} ,ip_{i+1}\frac{\partial}{\partial p_{i}} \right]&=(i-1)^2p_i\frac{\partial}{\partial p_{i-1}}ip_{i+1}\frac{\partial}{\partial p_{i}}\\ 
&-ip_{i+1}\frac{\partial}{\partial p_{i}} \left( (i-1)^2 p_i \frac{\partial}{\partial p_{i-1}} \right)\\
&-i (i-1)^2p_ip_{i+1}\frac{\partial}{\partial p_{i}} \frac{\partial}{\partial p_{i-1}}=i(i-1)^2p_{i+1} \frac{\partial}{\partial p_{i-1}}.
\end{align*}
\begin{align*}
B_{i,i-2}=\left[ (i-1)^2p_i\frac{\partial}{\partial p_{i-1}} ,(i-2)p_{i-1}\frac{\partial}{\partial p_{i-2}} \right]
&=(i-1)^2p_i\frac{\partial}{\partial p_{i-1}}\left( (i-2)p_{i-1}\frac{\partial}{\partial p_{i-2}} \right)\\
&+ (i-2)(i-1)^2p_ip_{i-1}\frac{\partial}{\partial p_{i-2}}\frac{\partial}{\partial p_{i-1}}\\
&-(i-1)^2(i-2)p_ip_{i-1}\frac{\partial}{\partial p_{i-1}}\frac{\partial}{\partial p_{i-2}}\\
&=(i-1)^2(i-2)p_i \frac{\partial}{\partial p_{i-2}}.
\end{align*}
\begin{align*}
A_{i,j,i-1}=\left[  ijp_{i+j+1}\frac{\partial}{\partial p_{i}}\frac{\partial}{\partial p_{j}}, (i-1)p_{i}\frac{\partial}{\partial p_{i-1}} \right]&= ij(i-1)p_{i+j+1}\frac{\partial}{\partial p_{i}}\frac{\partial}{\partial p_{j}} \left( p_{i}\frac{\partial}{\partial p_{i-1}} \right)\\
&+ ij(i-1)p_{i+j+1}p_{i}\frac{\partial}{\partial p_{i}}\frac{\partial}{\partial p_{j}}\frac{\partial}{\partial p_{i-1}}\\
&- ij(i-1)p_{i+j+1}p_{i}\frac{\partial}{\partial p_{i}}\frac{\partial}{\partial p_{j}}\frac{\partial}{\partial p_{i-1}}\\
&=ij(i-1)p_{i+j+1}\frac{\partial}{\partial p_{j}}\frac{\partial}{\partial p_{i-1}}.
\end{align*}
Similarly $A_{i,j,j-1}=ij(i-1)p_{i+j+1}\frac{\partial}{\partial p_{i}}\frac{\partial}{\partial p_{j-1}}$.
\begin{flalign*}
A_{i,j,i+j+1}&=\left[  ijp_{i+j+1}\frac{\partial}{\partial p_{i}}\frac{\partial}{\partial p_{j}}, (i+j+1)p_{i+j+2}\frac{\partial}{\partial p_(i+j+1)} \right]&\\
&=ij(i+j+1)p_{i+j+1}\frac{\partial}{\partial p_{i}}\frac{\partial}{\partial p_{j}}  p_{j+i+2}\frac{\partial}{\partial p_{j+i+1}}\\
&-ij(i+j+1)p_{j+i+2}\frac{\partial}{\partial p_{j+i+1}} \left( p_{i+j+1}\frac{\partial}{\partial p_{i}}\frac{\partial}{\partial p_{j}} \right)\\ 
&-ij(i+j+1)p_{i+j+1} p_{j+i+2}\frac{\partial}{\partial p_{j+i+1}}\frac{\partial}{\partial p_{i}}\frac{\partial}{\partial p_{j}}\\
&=-ij(i+j+1)p_{j+i+2}\frac{\partial}{\partial p_{i}}\frac{\partial}{\partial p_{j}}.
\end{flalign*}
\begin{flalign*}
\Gamma_{i,j,i}=\left[(i+j-1)p_ip_j\frac{\partial}{\partial p_{i+j-1}}, ip_{i+1}\frac{\partial}{\partial p_{i}} \right]&=(i+j-1)p_ip_j\frac{\partial}{\partial p_{i+j-1}}ip_{i+1}\frac{\partial}{\partial p_{i}}&\\
&- ip_{i+1}\frac{\partial}{\partial p_{i}} \left( (i+j-1)p_ip_j\frac{\partial}{\partial p_{i+j-1}} \right)\\
&-(i+j-1)p_ip_j\frac{\partial}{\partial p_{i+j-1}}ip_{i+1}\frac{\partial}{\partial p_{i}}\\
&=-(i+j-1)ip_{i+1} p_j\frac{\partial}{\partial p_{i+j-1}}.
\end{flalign*}
Similarly $\Gamma_{i,j,j}=- (i+j-1)ip_{j+1} p_i\frac{\partial}{\partial p_{i+j-1}}$.
\begin{flalign*}
\Gamma_{i,j,i+j-2}&=\left[(i+j-1)p_ip_j\frac{\partial}{\partial p_{i+j-1}}, (i+j-2)p_{i+j-1}\frac{\partial}{\partial p_{i+j-2}} \right]&\\
&=(i+j-1)p_ip_j\frac{\partial}{\partial p_{i+j-1}} \left( (i+j-2)p_{i+j-1}\frac{\partial}{\partial p_{i+j-2}} \right)\\
&+(i+j-1)(i+j-2)p_{i+j-1}p_ip_j\frac{\partial}{\partial p_{i+j-2}}\frac{\partial}{\partial p_{i+j-1}}\\
&-(i+j-1)(i+j-2)p_{i+j-1}p_ip_j\frac{\partial}{\partial p_{i+j-2}}\frac{\partial}{\partial p_{i+j-1}}\\
&=(i+j-1)(i+j-2)p_ip_j\frac{\partial}{\partial p_{i+j-2}}.
\end{flalign*}
\\
\\
{Summing up all the non-zeros $A$ terms yields:}

\begin{flalign*}
\sum_{i,j}A_{i,j,i-1}+\sum_{i,j}A_{i,j,j-1}+\sum_{i,j}A_{i,j,i+j+1}
&=  \sum_{i,j\geq1}ij(j-1)p_{i+j+1}\frac{\partial}{\partial p_{i}}\frac{\partial}{\partial p_{j-1}}&\\
&+ \sum_{i,j\geq1}ij(i-1)p_{i+j+1}\frac{\partial}{\partial p_{j}} \frac{\partial}{\partial p_{i-1}}\\
& - \sum_{i,j\geq1}ij(i+j+1)p_{j+i+2}\frac{\partial}{\partial p_{i}}\frac{\partial}{\partial p_{j}}\\
&= \sum_{i,j\geq1}ij(i+1)p_{i+j+2}\frac{\partial}{\partial p_{i}}\frac{\partial}{\partial p_{j}}\\
&+ \sum_{i,j\geq1}ij(j+1)p_{i+j+2}\frac{\partial}{\partial p_{i}}\frac{\partial}{\partial p_{j}}\\
&- \sum_{i,j\geq1}ij(i+j+1)p_{j+i+2}\frac{\partial}{\partial p_{i}}\frac{\partial}{\partial p_{j}}\\
&=\boxed{ \sum_{i,j\geq1}ijp_{j+i+2}\frac{\partial}{\partial p_{i}}\frac{\partial}{\partial p_{j}}}.
\end{flalign*}
\\
\\
{Summing up all the non-zeros $B$ terms yields:}

\begin{flalign*}
\sum_{i}B_{i,i}+\sum_{i}B_{i,i-2}
&=\sum_{i\geq1}(i-2)(i-1)^2p_i\frac{\partial}{\partial p_{i-2}}-\sum_{i\geq1}i(i-1)^2 p_{i+1} \frac{\partial}{\partial p_{i-1}}\\
&=\sum_{i\geq1}(i-1)i^2p_{i+1}\frac{\partial}{\partial p_{i-1}}-\sum_{i\geq1}i(i-1)^2 p_{i+1} \frac{\partial}{\partial p_{i-1}}\\
&=\boxed{\sum_{i\geq1}(i-1)ip_{i+1}\frac{\partial}{\partial p_{i-1}}}.
\end{flalign*}
\\
\\
{Finally, summing up all the non-zeros $\Gamma$ terms yields:}

\begin{flalign*}
\sum_{i,j}\Gamma_{i,j,i+j-2}+\sum_{i,j}\Gamma_{i,j,i}+\sum_{i,j}\Gamma_{i,j,j}
&=\sum_{i,j\geq1}(i+j-1)(i+j-2)p_ip_j\frac{\partial}{\partial p_{i+j-2}}&\\
 &- \sum_{i,j\geq1} (i+j-1)ip_{i+1} p_j\frac{\partial}{\partial p_{i+j-1}}\\
 &-\sum_{i,j\geq1} j(i+j-1) p_{j+1} p_i\frac{\partial}{\partial p_{i+j-1}}\\
&=\sum_{i,j\geq1}(i+j-1)(i+j-2)p_ip_j\frac{\partial}{\partial p_{i+j-2}}\\
&-\sum_{i,j\geq1} (i+j-2)(i-1)p_{i} p_j\frac{\partial}{\partial p_{i+j-2}}\\
&-\sum_{i,j\geq1} (j-1)(i+j-2) p_{j} p_i\frac{\partial}{\partial p_{i+j-2}}\\
&=\sum_{i,j\geq1}(i+j-2)(i+j-1-i+1-j+1)p_ip_j\frac{\partial}{\partial p_{i+j-2}}\\
&=\boxed{\sum_{i,j\geq1}(i+j-2)p_ip_j\frac{\partial}{\partial p_{i+j-2}}}.
\end{flalign*}

As a consequence

$$\Omega_2=(\al-1)\sum_{i\geq1}(i-1)ip_{i+1}\frac{\partial}{\partial p_{i-1}}+\sum_{i,j\geq1}(i+j-2)p_ip_j\frac{\partial}{\partial p_{i+j-2}}+\al \sum_{i,j\geq1}ijp_{j+i+2}\frac{\partial}{\partial p_{i}}\frac{\partial}{\partial p_{j}}$$

\section*{Acknowledgment}

Andrei L. Kanuninnkov is supported by the Russian Science Foundation, grant 16-11-10013.

\printbibliography

\end{document}